\numberwithin{equation}{section}   
\title{\bf Closed-Loop Solvability of Linear Quadratic Mean-Field Type Stackelberg Stochastic Differential Games
	\thanks{This work is supported by National Key Research \& Development Program of China (2022YFA1006104), National Natural Science Foundations of China (11971266, 11831010), and Shandong Provincial Natural Science Foundations (ZR2022JQ01, ZR2020ZD24, ZR2019ZD42).}}
\author{\normalsize Zixuan Li\thanks{\it School of Mathematics, Shandong University, Jinan 250100, P.R. China, E-mail: 201812064@mail.sdu.edu.cn},\quad Jingtao Shi\thanks{\it Corresponding author, School of Mathematics, Shandong University, Jinan 250100, P.R. China, E-mail: shijingtao@sdu.edu.cn}}
\date{}
\newtheorem{mypro}{Proposition}[section]
\newtheorem{mythm}{Theorem}[section]
\newtheorem{mydef}{Definition}[section]
\newtheorem{mylem}{Lemma}[section]
\newtheorem{Remark}{Remark}[section]
\begin{document}
	\maketitle
	
	\noindent{\bf Abstract:}\quad
	This paper is devoted to a Stackelberg stochastic differential game for a linear mean-field type stochastic differential system with a mean-field type quadratic cost functional in finite horizon. The coefficients in the state equation and weighting matrices in the cost functional are all deterministic. Closed-loop Stackelberg equilibrium strategies are introduced which require to be independent of initial states. Follower's problem is solved firstly, which is a stochastic linear quadratic optimal control problem. By converting the original problem into a new one whose optimal control is known, the closed-loop optimal strategy of the follower is characterized by two coupled Riccati equations as well as a linear mean-field type backward stochastic differential equation. Then the leader turns to solve a stochastic linear quadratic optimal control problem for a mean-field type forward-backward stochastic differential equation. Necessary conditions for the existence of closed-loop optimal strategies for the leader is given by the existence of two coupled Riccati equations with a linear mean-field type backward stochastic differential equation. The solvability of Riccati equations of leader's optimization problem is discussed in the case where the diffusion term of the state equation does not contain the control process of the follower. Moreover, leader's value function is expressed via two backward stochastic differential equations and two Lyapunov equations.
	\vspace{2mm}
	
	\noindent{\bf Keywords:}\quad Stackelberg stochastic differential game; mean-field type forward-backward stochastic differential equation; stochastic linear quadratic optimal control; Riccati equation; closed-loop solvability
	
	\vspace{2mm}
	
	\noindent{\bf Mathematics Subject Classification:}\quad 91A65, 91A15, 91A23, 93E20, 49N70
	
	\section{Introduction}
	
	Stackelberg differential game is also known as leader-follower differential game. The main idea is that the players determine their own strategies to ensure that they can maximize the benefits under the influence of the strategies of other participants. In 1934, Stackelberg game theory was first proposed by German economist H. von Stackelberg to reflect the asymmetric competition between firms in \cite{Stackelberg1934}. In the market, there are always some companies that have more power than other companies or individuals. In order to simulate this asymmetry in the market, a hierarchical structure is defined between decision makers in Stackelberg game model. The player making the decision firstly is called the leader, and he announces his decision at the beginning of the game. Then the remaining participant (called follower) makes decision based on leader's decision. At this time, the differential game of two players is transformed into the optimal control problem of the follower. Simply put, the follower immediately reacts to the actions of the leader, adjusts his decision, and makes his own cost functional reach the optimal. After the follower makes a decision, the leader then adjusts his strategy according to the follower's decision, so as to maximize his interests. Now, it becomes another optimal control problem. The hierarchical feature fits well with some problems in financial and economic markets, so Stackelberg game theory has great theoretical and application appeal to scholars.
	
	Simann and Cruz \cite{SC1973-1,SC1973-2} studied the multi-stages and dynamic {\it linear quadratic} (LQ for short) Stackelberg differential games, where feedback Stackelberg equilibria are expressed by matrix-valued Riccati differential equations in Hilbert space. Castanon and Athans \cite{CA1976} considered feedback Stackelberg strategies for the two-person linear multi-stages game with quadratic performance criteria and noisy measurements, and gave an explicit solution when information sets are nested in a stochastic case. Bagchi and Ba\c{s}ar \cite{BB1981} investigated the LQ Stackelberg stochastic differential game, where the diffusion term of the state equation did not contain the state and control variables. Yong \cite{Yong2002} extended the LQ Stackelberg stochastic differential game to random and state-control dependent coefficients and the weight matrices for the controls in the cost functionals are not necessarily positive definite, and obtained the feedback representation of the open-loop equilibrium via some stochastic Riccati equations. In the past decades, there have been a great deal of works on this issue, for jump diffusions see \O ksendal et al. \cite{OSU2013}, Moon \cite{Moon2021}, for different information structures see Ba\c{s}ar and Olsder \cite{BO1998}, Bensoussan et al. \cite{BCS2015}, for multiple followers see Mukaidani and Xu \cite{MX2015}, for time-delayed systems see Xu and Zhang \cite{XZ2016}, Xu et al. \cite{XSZ2018}, for partial/asymmetric/overlapping information see Shi et al. \cite{SWX2016,SWX2017,SWX2020}, for large populations see Huang et al. \cite{HSW2020}, Moon and Ba\c{s}ar \cite{MB2018}, for backward stochastic systems see Zheng and Shi \cite{ZS2020}, for the multilevel hierarchy see Li and Yu \cite{LiYu2018}, Li et al. \cite{LiXiongYu2021}, for the model uncertain system see Huang et al. \cite{HWW2022}.
	
	For mean-field type model, Lin et al. \cite{LJZ2019} concerned the LQ Stackelberg game of the mean-field type stochastic systems in finite horizon where the open-loop Stackelberg equilibrium was represented as the feedback form involving the state as well as its mean. Du and Wu \cite{DW2019} was devoted to a kind of Stackelberg differential game of {\it mean-field type backward stochastic differential equations} (MF-BSDEs for short). Wang and Zhang \cite{WZ2020}, Wang et al. \cite{WWZ2020} considered mean-field type LQ Stackelberg stochastic differential game with one leader and two followers under symmetric and asymmetric information, respectively. Moon and Yang \cite{MY2021} considered the LQ time-inconsistent mean-field type Stackelberg stochastic differential game. Moon \cite{Moon2022} studied LQ Stackelberg differential games for mean-field type stochastic systems with jump diffusions.
	
	The open-loop optimal control depends on the initial state of the system, that is to say, it is necessary to know the initial state of the system when solving the open-loop optimal control. But this is not practical in engineering systems and real life, so we are interested in the closed-loop optimal solution which is independent of the initial state. For this consideration, the closed-loop solvability of mean-field type LQ Stackelberg stochastic differential game is considered in this paper. Concepts of open-loop and closed-loop solvabilities of {\it stochastic LQ} (SLQ for short) optimal control problems were introduced in Sun and Yong \cite{SY2014} in 2014, which studied a LQ zero-sum stochastic differential game and described the existence of open-loop and closed-loop saddle points respectively. The SLQ optimal control problem was introduced as a special case with only one player/controller at the end of this paper. More detailed characterizations of the closed-loop solvability for the SLQ optimal control problem were further given in Sun et al. \cite{SLY2016}, where they studied the relationship between convexity, uniform convexity of cost functional and open-loop, closed-loop solvabilities. Li et al. \cite{LSY2016} was devoted to the closed-loop solvability of the mean-field type SLQ optimal control problem. They obtained Riccati equations by transforming the original problem into a deterministic problem and using the matrix maximum principle. Using the same method, Tang et al. \cite{TangLiHuang2020} studied the open-loop and closed-loop solvabilities for the indefinite mean-field type SLQ optimal control with Poisson jumps. Sun and Yong \cite{SY2019} studied the SLQ nonzero-sum differential game and gave sufficient and necessary conditions for the existence of open-loop and closed-loop Nash equilibria. For more details, please refer to the books \cite{SunYong2020-1,SunYong2020-2}. Lv \cite{Lv2019,Lv2020} studied the closed-loop solvability of the SLQ optimal control problem for systems governed by stochastic evolution equations and the expression of the closed-loop optimal strategy was given. Lyapunov equations were introduced directly and Riccati equations were got from these. Li and Shi \cite{LiShi2022} studied the closed-loop solvability of SLQ optimal control problems with Poisson jumps.
	
	Recently, Li and Shi \cite{LiShi2021} researched the closed-loop solvability of LQ Stackelberg stochastic differential game. In contrast to our previous paper \cite{LiShi2021}, we consider a SLQ system with mean-field terms in this paper, where the coefficients in the state equation and the weighting matrices in the cost functionals are all deterministic, and the weighting matrices in the cost functional are allowed to be indefinite. This paper is never a trivial extension. The main contribution of this paper can be summarized as follows.
	\begin{itemize}
		\item [(\romannumeral 1)] The SLQ system considered in this paper is an inhomogeneous system, i.e. the state equation includes inhomogeneous terms and cost functionals contain linear terms of state process and control processes. The existence of these terms greatly increases the computational difficulties of closed-loop optimal strategies and value functions. Especially for leader's problem, as shown in Section 4, the existence of these terms makes the optimization problem of the leader become a stochastic optimal control problem of a inhomogeneous {\it mean-field type forward-backward SDE} (MF-FBSDE for short), which makes a MF-BSDE is necessary to characterize leader's closed-loop optimal strategy. In addition, the appearance of these terms also leads to the failure of the completion-of-square technique, and we need to find a new way to calculate the value function of leader.
		\item [(\romannumeral 2)] Sufficient and necessary conditions for the closed-loop solvability of the follower's optimization problem are given. Different from Li et al. \cite{LSY2016}, when solving the optimization problem of the  follower, a mean-field type SLQ optimal control problem, we transform the closed-loop solvability problem of the follower into a new mean-field type SLQ optimal control problem for which the open-loop optimal control is known. We can get two Lyapunov equations from there and then we obtain the closed-loop optimal strategy of the follower which is characterized by two Riccati equations and a linear MF-BSDE.
		\item [(\romannumeral 3)]  The definitions of leader's closed-loop optimal strategy and closed-loop Stackelberg equilibrium are given. The expression for leader's closed-loop optimal strategy which is characterized by two Riccati equations and a linear MF-BSDE is derived. Since leader's problem is a stochastic optimal control problem of a MF-FBSDE, the appearance of the MF-BSDE in leader's state makes the method in Sun et al. \cite{SLY2016} is invalid in when constructing Riccati equations, where they defined $P(\cdot)=\mathbb{Y}\mathbb{X}^{-1}$ in which $\mathbb{Y(\cdot)}$ was a matrix-valued BSDE and $\mathbb{X(\cdot)}$ was a matrix-valued SDE. However, in our leader's problem, some product terms of $\mathbb{X}(\cdot)$ and $\mathbb{Z}(\cdot)$ in the shape of $\big(\mathbb{Z}-\mathbb{E}\mathbb{Z}\big)\mathbb{X}^{-1}$ or $\mathbb{E}\mathbb{Z}\cdot\mathbb{E}\mathbb{X}^{-1}$ appear in the expression of $\mathbb{Y}\mathbb{X}^{-1}$. We have no way to deal with such terms, where $\mathbb{Z}(\cdot)$ is the second variable of matrix-valued MF-BSDE $\big(\mathbb{Y}(\cdot),\mathbb{Z}(\cdot)\big)$. Similarly, the method in \cite{LSY2016} also fails. Inspired by \cite{LSY2016,Lv2019,Lv2020}, we overcome this difficulty by transforming the original problem into a new stochastic optimal control problem whose open-loop optimal control is $\bar{v}(\cdot)=0$.
		\item [(\romannumeral 4)] The general solvability of leader's Riccati equations $(P_2(\cdot), \Pi_2(\cdot))$ is very difficult due to the appearance of terms like $[\tilde{R}+\tilde{\mathcal{D}}^\top(I-P_2\tilde{\mathcal{K}})^{-1}P_2\tilde{\mathcal{D}}]^{-1}$ and $[\check{R}+\check{\mathcal{D}}^\top(I-P_2\check{\mathcal{K}})^{-1}P_2\check{\mathcal{D}}]^{-1}$. The solvabilities of Riccati equations of leader's optimization problem is studied under some coefficient assumptions in a special case where the diffusion term of the state equation does not include the control process of the follower. We use iterative method to prove the existence of a solution of $P_2(\cdot)$, then $\Pi_2(\cdot)$ is reduced to the classical Riccati equation whose solvability can be proved under some conditions in this case.
		\item [(\romannumeral 5)] Using the equivalent cost functional method, we introduce two coupled Lyapunov equations and two coupled BSDEs, and use them to get the value function of the leader. This method was proposed by Yu \cite{Yu2013} to deal with the SLQ optimal control problem, that is, using a function $f(\cdot)$ to transform the original cost functional into a more ideal form. This method also appeared in Huang et al. \cite{HWW2022}.
	\end{itemize}
	
	The rest of this paper is organized as follows. Section 2 gives some preliminaries, to introduce closed-loop Stackelberg equilibrium for the mean-field type LQ Stackelberg stochastic differential game. Section 3 is devoted to solve the optimization problem of the follower. With the aid of two Riccati equations, sufficient and necessary conditions of the closed-loop solvability for follower's problem are given. In Section 4, necessary conditions for the closed-loop solvability for leader's problem is obtained. Finally, some concluding remarks are given in Section 5.
	
	\section{Problem formulation and preliminaries}\label{Preliminaries}
	
	In this section, we present the problem and some notations used throughout this paper. Let $T>0$ be a constant, and $[0,T]$ is a finite time duration. Let $\mathbb{R}^n$ be a $n$-dimensional real Euclidean space, $\mathbb{R}^{n \times m}$ be the set of all $(n \times m)$ real matrices, and $\mathbb{S}^n$ be the set of all $(n \times n)$ symmetric matrices, $\mathbb{S}^n_+$ be the set of all $(n \times n)$ symmetric matrices which are semi-positive. We use $M^\top$ and $\mathcal{R}(M)$ to denote the transpose and the range of the matrix M, respectively. And we let $\langle \cdot,\cdot \rangle $ denote inner products on a Hilbert space given by $\langle M,N \rangle \to tr(M^\top N)$ where $tr(M)$ is the trace of the square matrix $M$, and $|\cdot|$ denotes the Frobenius norm induced by $|M|=\sqrt{tr(MM^\top)}$. For $M,N\in \mathbb{S}^n$, $M\geq N$ (respectively, $M>N$) implies that $M-N$ is a positive semi-definite matrix (respectively, positive definite matrix). Let $M^\dagger$ denote the pseudo-inverse of the matrix $M \in \mathbb{R}^{n \times m}$. If the inverse $M^{-1}$ of $M \in \mathbb{R}^{n \times n}$ exists, then the pseudo-inverse is equal to the inverse.
	
	For any Banach space $H$ (for example, $H=\mathbb{R}^n, \mathbb{R}^{n \times m}, \mathbb{S}^n, \mathbb{S}^n_+$) and $t \in [0,T)$, let $L^p(t,T;H)$ $(1 \leq p < \infty)$  be the space of all Lebesgue measurable functions $\varphi: [t,T] \to H$ such that $\int_t^T|\varphi(s)|^pds<\infty$, $L^\infty(t,T;H)$ be the set of all essentially bounded measurable functions $\varphi: [t,T] \to H$, and $C([t,T];H)$ be the space of all continuous functions $\varphi: [t,T] \to H$.
	
	Let $(\Omega,\mathcal{F},\mathbb{F},\mathbb{P})$ be a completed filtered probability space on which a standard one-dimensional Brownian motion $W=\{W(t);0 \leq t < \infty \}$ is defined, where $\mathbb{F}=\{\mathcal{F}_t\}_{t \leq 0}$ is the natural filtration of $W$ augmented by all $\mathbb{P}$-null sets in $\mathcal{F}$. For $t\in(0,T]$, we denote by $L^2_{\mathcal{F}_t}(\Omega;H)$ the space of all $\mathcal{F}_t$-measurable random variables $\xi$ such that $\mathbb{E}|\xi|^2<\infty$, by $L^2_{\mathbb{F}}(t,T;H)$ the space of all $\mathbb{F}$-progressively measurable processes $f(\cdot)$ such that $\mathbb{E}\int_t^T|f(s)|^2ds<\infty$, and by $L^2_{\mathbb{F}}(\Omega;L^1(t,T;H))$ the space of all $\mathbb{F}$-progressively measurable processes $f(\cdot)$ such that
	$\mathbb{E}\big(\int_t^T|f(s)|ds\big)^2<\infty$.
	
	For any initial pair $(t,\xi)\in [0,T]\times L^2_{\mathcal{F}_t}(\Omega;\mathbb{R}^n) $, we consider the following controlled linear {\it mean-field type stochastic differential equation} (MF-SDE for short):
	\begin{equation}\label{state}
		\left\{
		\begin{aligned}
			dx(s)&=\big[A(s)x(s)+B_1(s)u_1(s)+B_2(s)u_2(s)+b(s)\\
			&\qquad +\hat{A}(s)\mathbb{E}x(s)+\hat{B}_1(s)\mathbb{E}u_1(s)+\hat{B}_2(s)\mathbb{E}u_2(s)\big]ds\\
			&\quad+\big[C(s)x(s)+D_1(s)u_1(s)+D_2(s)u_2(s)+\sigma(s)\\
			&\qquad +\hat{C}(s)\mathbb{E}x(s)+\hat{D}_1(s)\mathbb{E}u_1(s)+\hat{D}_2(s)\mathbb{E}u_2(s)\big]dW(s),\quad s \in [t,T],\\
			x(t)&=\xi,
		\end{aligned}
		\right.
	\end{equation}
	where $A(\cdot),\hat{A}(\cdot),B_i(\cdot),\hat{B}_i(\cdot),C(\cdot),\hat{C}(\cdot),D_i(\cdot),\hat{D}_i(\cdot),i=1,2$ are given deterministic matrix-valued functions of proper dimensions, and $b(\cdot),\sigma(\cdot)$ are $\mathbb{F}$-progressively measurable processes. In the above, $x(\cdot)$ is the state process with values in $\mathbb{R}^n$, and $u_1(\cdot),u_2(\cdot)$ are control processes taken by two players in the game, labeled 1 and 2, with values in $\mathbb{R}^{m_1}$ and $\mathbb{R}^{m_2}$, respectively.
	
	We define the admissible control set: For $i=1,2$,
	\begin{equation}\label{cost functional}
		\mathcal{U}_i[t,T]:=\bigg\{u_i:[t,T] \times \Omega \to \mathbb{R}^{m_i} \big|\,\mathbb{F}\mbox{-progressively measurable},\,\mathbb{E}\int_t^T |u_i(s)|^2 ds < \infty \bigg\}.
	\end{equation}
	The control processes $u_1(\cdot)\in\mathcal{U}_1[t,T]$ and $u_2(\cdot)\in\mathcal{U}_2[t,T]$ are called admissible controls. We consider cost functionals for two players as follows. For $i=1,2$,
	\begin{equation}\label{cf}
		\begin{aligned}
			&J_i(t,\xi;u_1(\cdot),u_2(\cdot))=\mathbb{E} \Bigg\{\big\langle G^ix(T),x(T)\big\rangle+\big\langle \hat{G}^i\mathbb{E}x(T),\mathbb{E}x(T)\big\rangle
			+2\big\langle g^i ,x(T)\big\rangle\\
			&\quad +2\big\langle \hat{g}^i ,\mathbb{E}x(T)\big\rangle +\int_t^T \bigg[\bigg\langle
			\left( \begin{array}{ccc}
				Q^i(s)   & S^i_1(s)^\top  & S^i_2(s)^\top \\
				S^i_1(s) & R^i_{11}(s)    & R^i_{21}(s)   \\
				S^i_2(s) & R^i_{12}(s)    & R^i_{22}(s)   \\
			\end{array} \right)
			\left( \begin{array}{c} x(s) \\ u_1(s) \\ u_2(s) \end{array} \right),
			\left( \begin{array}{c} x(s) \\ u_1(s) \\ u_2(s) \end{array}\right)\bigg\rangle\\
			&\quad +2\bigg\langle
			\left( \begin{array}{c} q^i(s) \\ \rho^i_1(s) \\ \rho^i_2(s) \end{array} \right),
			\left( \begin{array}{c} x(s) \\ u_1(s) \\ u_2(s) \end{array}\right)\bigg\rangle\\
			&\quad +\bigg\langle
			\left( \begin{array}{ccc}
				\hat{Q}^i(s)   & \hat{S}^i_1(s)^\top  & \hat{S}^i_2(s)^\top \\
				\hat{S}^i_1(s) & \hat{R}^i_{11}(s)    & \hat{R}^i_{21}(s)   \\
				\hat{S}^i_2(s) & \hat{R}^i_{12}(s)    & \hat{R}^i_{22}(s)   \\
			\end{array} \right)
			\left( \begin{array}{c} \mathbb{E}x(s) \\ \mathbb{E}u_1(s) \\ \mathbb{E}u_2(s) \end{array} \right),
			\left( \begin{array}{c} \mathbb{E}x(s) \\ \mathbb{E}u_1(s) \\ \mathbb{E}u_2(s) \end{array}\right)\bigg\rangle \bigg]ds \Bigg\},
		\end{aligned}
	\end{equation}
	where $G^i,\hat{G}^i$ are symmetric matrices, $Q^i(\cdot),\hat{Q}^i(\cdot),S^i_1(\cdot),\hat{S}^i_1(\cdot),S^i_2(\cdot),\hat{S}^i_2(\cdot),R^i_{11}(\cdot),\hat{R}^i_{11}(\cdot),R^i_{12}(\cdot)$,
	$\hat{R}^i_{12}(\cdot),R^i_{21}(\cdot),\hat{R}^i_{21}(\cdot),R^i_{22}(\cdot),\hat{R}^i_{22}(\cdot)$ are deterministic matrix-valued functions with
	\begin{equation}\nonumber
		\begin{aligned}
			&Q^i(\cdot)^\top =Q^i(\cdot),\,\quad\,R^i_{jj}(\cdot)^\top =R^i_{jj}(\cdot),\,\quad\,R^i_{12}(\cdot)^\top =R^i_{21}(\cdot),\\
			&\hat{Q}^i(\cdot)^\top =\hat{Q}^i(\cdot),\,\quad\,\hat{R}^i_{jj}(\cdot)^\top =\hat{R}^i_{jj}(\cdot),\,\quad\,\hat{R}^i_{12}(\cdot)^\top =\hat{R}^i_{21}(\cdot),\qquad\,\, i,j=1,2,
		\end{aligned}
	\end{equation}
	$g^i$ are $\mathcal{F}_T$-measurable random vectors, $\hat{g}^i$ are deterministic vectors, and $q^i(\cdot),\rho^i_1(\cdot),\rho^i_2(\cdot)$ are $\mathbb{F}$-progressively measurable processes.
	
	Now, we introduce the following assumptions that will be in force throughout this paper.
	\begin{itemize}
		\item [\textbf{(H1)}] Coefficients of the state equation (\ref{state}) satisfy the following:
		\begin{equation*}
			\begin{cases}
				A(\cdot), \hat{A}(\cdot) \in L^1(0,T;\mathbb{R}^{n \times n}),\quad B_i(\cdot),\hat{B}_i(\cdot) \in L^2(0,T;\mathbb{R}^{n \times m_i}),\\
				b(\cdot) \in L^2_\mathbb{F}(\Omega;L^1(0,T;\mathbb{R}^n)),\quad C(\cdot), \hat{C}(\cdot) \in L^2(0,T;\mathbb{R}^{n \times n}),\\
				D_i(\cdot),\hat{D}_i(\cdot) \in L^\infty(0,T;\mathbb{R}^{n \times m_i}),\quad \sigma(\cdot) \in L^2_\mathbb{F}(0,T;\mathbb{R}^n),\quad i=1,2.
			\end{cases}
		\end{equation*}
		\item [\textbf{(H2)}] Weighting coefficients of cost functionals (\ref{cf}) satisfy the following:
		\begin{equation*}
			\begin{cases}
				Q^i(\cdot), \hat{Q}^i(\cdot) \in L^1(0,T;\mathbb{S}^n),\quad S^i_j(\cdot), \hat{S}^i_j(\cdot) \in L^2(0,T;\mathbb{R}^{m_j \times n}),\\
				R^i_{jj}(\cdot), \hat{R}^i_{jj}(\cdot) \in L^{\infty}(0,T;\mathbb{S}^{m_j}),\quad R^i_{12}(\cdot), \hat{R}^i_{12}(\cdot) \in L^{\infty}(0,T;\mathbb{R}^{m_2 \times m_1}),\\
				q^i(\cdot) \in L^2_{\mathbb{F}}(\Omega;L^1(0,T;\mathbb{R}^{n})),\quad \rho^i_j(\cdot) \in L^2_{\mathbb{F}}(0,T;\mathbb{R}^{m_j}),\\
				G^i, \hat{G}^i \in \mathbb{S}^n,\quad g^i \in L^2_{\mathcal{F}_T}(\Omega;\mathbb{R}^{n}),\quad \hat{g}^i \in \mathbb{R}^n, \quad i, j=1, 2.
			\end{cases}
		\end{equation*}
	\end{itemize}
	
	Under (H1), (H2), for given $\xi \in L^2_{\mathcal{F}_t}(\Omega;\mathbb{R}^n)$, $u_i(\cdot) \in \mathcal{U}_i[t,T],\,\,i=1,\,2$, (\ref{state}) admits a unique adapted solution $x(\cdot) \in L^2_{\mathbb{F}}(t,T;\mathbb{R}^n)$ (see, for example, Sun \cite{Sun2017}) and (\ref{cf}) are well-defined.
	
	Now let us explain the mean-field type Stackelberg stochastic differential game framework. In this game, Player 1 is the follower, and Player 2 is the leader. At the beginning of Stackelberg game, for any given initial state, the leader announces his control $u_2(\cdot)\in\mathcal{U}_2[t,T]$ over the whole time horizon $[t,T]$. The follower determines his optimal strategy $\bar{u}_1(\cdot)\in\mathcal{U}_1[t,T]$ over $[t,T]$ to minimize $J_1(t,\xi;u_1(\cdot),u_2(\cdot))$ after he know the decision of the leader. Then, considering follower's optimal strategy, the leader would like to find an optimal control $\bar{u}_2(\cdot)\in\mathcal{U}_2[t,T]$ such that $J_2(t,\xi;\bar{u}_1(\cdot),\bar{u}_2(\cdot))$ is the minimum of $J_2(t,\xi;\bar{u}_1(\cdot),u_2(\cdot))$ over $u_2(\cdot)\in\mathcal{U}_2[t,T]$. In a more rigorous way, we give the following definition of the Stackelberg game.
	
	\begin{mydef}\label{def2.1}
		For any given initial pair $(t,\xi) \in [0,T] \times L^2_{\mathcal{F}_t}(\Omega;\mathbb{R}^n)$. A pair $(\bar{u}_1(\cdot),\bar{u}_2(\cdot)) \in \mathcal{U}_1[t,T] \times \mathcal{U}_2[t,T]$ is called a (unique) open-loop Stackelberg equilibrium point to the above Stackelberg game if it satisfies the following conditions:
		\begin{itemize}
			\item [(\romannumeral 1)] For any  $u_2(\cdot)\in\mathcal{U}_2[t,T]$, there exists a (unique) map $\bar{\mu}_1:\mathcal{U}_2[t,T] \times L^2_{\mathcal{F}_t}(\Omega;\mathbb{R}^n) \to \mathcal{U}_1[t,T]$ such that
			\begin{equation*}
				J_1(t, \xi;\bar{\mu}_1[u_2(\cdot),\xi](\cdot),u_2(\cdot))=\mathop{\min}\limits_{u_1(\cdot) \in\, \mathcal{U}_1[t,T]}J_1(t,\xi;u_1(\cdot),u_2(\cdot)).
			\end{equation*}
			\item [(\romannumeral 2)] There exists a (unique) $\bar{u}_2(\cdot)\in\mathcal{U}_2[t,T]$ such that
			\begin{equation*}
				J_2(t,\xi;\bar{\mu}_1[\bar{u}_2(\cdot),\xi](\cdot),\bar{u}_2(\cdot))=\mathop{\min}\limits_{u_2(\cdot) \in\, \mathcal{U}_2[t,T]}J_2(t,\xi;\bar{\mu}_1[u_2(\cdot),\xi](\cdot),u_2(\cdot)).
			\end{equation*}
			\item [(\romannumeral 3)] The open-loop optimal strategy of the follower is $\bar{u}_1(\cdot)=\bar{\mu}_1[\bar{u}_2(\cdot),\xi](\cdot)$.
		\end{itemize}
	\end{mydef}
	
	We now consider the closed-loop Stackelberg equilibrium of our mean-field type LQ Stackelberg stochastic differential game. To this end, we denote
	\begin{equation*}
		\mathcal{Q}_i[t,T]\equiv L^2(t,T;\mathbb{R}^{m_i\times n}),\qquad i=1,2.
	\end{equation*}
	For convenience, we set in this paper
	\begin{equation*}
		\boldsymbol{W}\equiv W+\hat{W}, \quad \text{for } W=A,\, B_i,\, C,\, D_i,\, Q^i,\, S^i_j,\, R^i_{jj},\, R^i_{12},\, G^i,\,g^i, \quad i,j=1,2.
	\end{equation*}
	
	Firstly, for any initial pair $(t,\xi) \in [0,T] \times L^2_{\mathcal{F}_t}(\Omega;\mathbb{R}^n)$ and any given $u_2(\cdot) \in \mathcal{U}_2[t,T]$, taking $\big(\Theta_1(\cdot) ,\hat{\Theta}_1(\cdot),v_1(\cdot)\big)\in \mathcal{Q}_1[t,T] \times \mathcal{Q}_1[t,T] \times \mathcal{U}_1[t,T]$, let us consider the following MF-SDE: (Some time variables are usually omitted.)
	\begin{equation}\label{follower closed-loop state}
		\left\{
		\begin{aligned}
			dx^{u_2}(s)&=\big\{(A+B_1\Theta_1)(x^{u_2}-\mathbb{E}x^{u_2})+B_1(v_1-\mathbb{E}v_1)+B_2u_2+b\\
			&\qquad +(\boldsymbol{A}+\boldsymbol{B}_1\hat{\Theta}_1)\mathbb{E}x^{u_2}+\boldsymbol{B}_1\mathbb{E}v_1+\hat{B}_2\mathbb{E}u_2\big\}ds\\
			&\quad+\big\{(C+D_1\Theta_1)(x^{u_2}-\mathbb{E}x^{u_2})+D_1(v_1-\mathbb{E}v_1)+D_2u_2+\sigma\\
			&\qquad +(\boldsymbol{C}+\boldsymbol{D}_1\hat{\Theta}_1)\mathbb{E}x^{u_2}+\boldsymbol{D}_1\mathbb{E}v_1+\hat{D}_2\mathbb{E}u_2\big\}dW(s),\quad s\in[t,T],\\
			x^{u_2}(t)&=\xi,
		\end{aligned}
		\right.
	\end{equation}
	which admits a unique solution $x^{u_2}(\cdot) \equiv x(\cdot;t,\xi,\Theta_1(\cdot),\hat{\Theta}_1(\cdot),v_1(\cdot),u_2(\cdot))$, depending on $\Theta_1(\cdot)$, $\hat{\Theta}_1(\cdot)$ and $v_1(\cdot)$. The above MF-SDE is called the follower's closed-loop system of the original state equation (\ref{state}) under the follower's closed-loop strategy $(\Theta_1(\cdot),\hat{\Theta}_1(\cdot),v_1(\cdot))$. We point out that $(\Theta_1(\cdot),\hat{\Theta}_1(\cdot),v_1(\cdot)))$ is independent of the initial state $\xi$. With the above $x^{u_2}(\cdot)$, we define
	\begin{equation}\label{follower closed-loop cost}
		\begin{aligned}
			&J_1\big(t,\xi;\Theta_1\big(x^{u_2}-\mathbb{E}x^{u_2}\big)+\hat{\Theta}_1\mathbb{E}x^{u_2}+v_1,u_2(\cdot)\big)\\
			&\hspace{-3mm} =\mathbb{E}\bigg\{\big\langle G^1\big(x^{u_2}(T)-\mathbb{E}x^{u_2}(T)\big),x^{u_2}(T)-\mathbb{E}x^{u_2}(T)\big\rangle
			+2\big\langle g^1,x^{u_2}(T)-\mathbb{E}x^{u_2}(T)\big\rangle\\
			&\qquad +\big\langle \boldsymbol{G}^1\mathbb{E}x^{u_2}(T),\mathbb{E}x^{u_2}(T)\big\rangle +2\big\langle \boldsymbol{g}^1,\mathbb{E}x^{u_2}(T)\big\rangle\\
			&\qquad +\int_t^T \Big[\big\langle(Q^1+\Theta_1^\top S^1_1+S^{1\top}_1\Theta_1+\Theta_1^\top R^1_{11}\Theta_1)(x^{u_2}-\mathbb{E}x^{u_2}),x^{u_2}-\mathbb{E}x^{u_2}\big\rangle\\
			&\qquad\qquad +2\big\langle(S^1_1+R^1_{11}\Theta_1)(x^{u_2}-\mathbb{E}x^{u_2}),v_1-\mathbb{E}v_1\big\rangle
			+\big\langle R^1_{11}(v_1-\mathbb{E}v_1),v_1-\mathbb{E}v_1\big\rangle\\
			&\qquad\qquad +2\big\langle(S^1_2+R^1_{12}\Theta_1)^\top u_2+q^1+\Theta_1^\top \rho^1_1,x^{u_2}-\mathbb{E}x^{u_2}\big\rangle
			+2\big\langle R^1_{21}u_2+\rho^1_1,v_1-\mathbb{E}v_1\big\rangle\\
			&\qquad\qquad +\big\langle R^1_{22}u_2,u_2 \big\rangle+2\big\langle \rho^1_2,u_2 \big\rangle  \Big]ds\\
			&\qquad +\int_t^T \Big[\big\langle(\boldsymbol{Q}^1+\hat{\Theta}_1^\top \boldsymbol{S}^1_1+{\boldsymbol{S}^1_1}^\top\hat{\Theta}_1
			+\hat{\Theta}_1^\top\boldsymbol{R}^1_{11}\hat{\Theta}_1)\mathbb{E}x^{u_2},\mathbb{E}x^{u_2}\big\rangle\\
			&\qquad\qquad +2\big\langle(\boldsymbol{S}^1_1+\boldsymbol{R}^1_{11}\hat{\Theta}_1)\mathbb{E}x^{u_2},\mathbb{E}v_1\big\rangle
			+\big\langle\boldsymbol{R}^1_{11}\mathbb{E}v_1,\mathbb{E}v_1\big\rangle
			+2\big\langle(\boldsymbol{S}^1_2+\boldsymbol{R}^1_{12}\hat{\Theta}_1)^\top \mathbb{E}u_2\\
			&\qquad\qquad +\mathbb{E}q^1+\hat{\Theta}_1^\top \mathbb{E}\rho^1_1,\mathbb{E}x^{u_2}\big\rangle+2\big\langle \boldsymbol{R}^1_{21}u_2+\rho^1_1,\mathbb{E}v_1\big\rangle
			+\big\langle \hat{R}^1_{22}\mathbb{E}u_2,\mathbb{E}u_2 \big\rangle \Big]ds \bigg\}.
		\end{aligned}
	\end{equation}
	
	Then we introduce the following definition.
	
	\begin{mydef}
		For $u_2(\cdot)\in\mathcal{U}_2[t,T]$, a triple $(\bar{\Theta}_1(\cdot),\bar{\hat{\Theta}}_1(\cdot),\bar{v}_1(\cdot)) \in \mathcal{Q}_1[t,T] \times \mathcal{Q}_1[t,T] \times \mathcal{U}_1[t,T]$ is called (unique) follower's closed-loop optimal strategy on $[t,T]$, if the follower could find a (unique) pair  $(\bar{\Theta}_1(\cdot),\bar{\hat{\Theta}}_1(\cdot)) \in \mathcal{Q}_1[t,T] \times \mathcal{Q}_1[t,T]$ and a  (unique) map $\bar{v}_1:\mathcal{U}_2[t,T]\rightarrow\mathcal{U}_1[t,T]$ such that
		\begin{equation}
			\begin{aligned}
				&J_1\big(t,\xi;\bar{\Theta}_1\big(\bar{x}^{u_2}-\mathbb{E}\bar{x}^{u_2}\big)+\bar{\hat{\Theta}}_1\mathbb{E}\bar{x}^{u_2}+\bar{v}_1[u_2],u_2(\cdot)\big)
				\leq J_1\big(t,\xi;\Theta_1\big(x^{u_2}-\mathbb{E}x^{u_2}\big)+\hat{\Theta}_1\mathbb{E}x^{u_2}+v_1,u_2(\cdot)\big),\\
				&\quad \forall\, (\Theta_1(\cdot),\hat{\Theta}_1(\cdot),v_1(\cdot)) \in \mathcal{Q}_1[t,T] \times \mathcal{Q}_1[t,T] \times \mathcal{U}_1[t,T],\quad
				\forall\, (t,\xi) \in [0,T] \times L^2_{\mathcal{F}_t}(\Omega;\mathbb{R}^n),
			\end{aligned}
		\end{equation}
		where $\bar{x}^{u_2}(\cdot)\equiv x(\cdot;t,\xi,\bar{\Theta}_1(\cdot),\bar{\hat{\Theta}}_1(\cdot),\bar{v}_1[u_2](\cdot),u_2(\cdot))$, $x^{u_2}(\cdot)\equiv x(\cdot;t,\xi,\Theta_1(\cdot),\hat{\Theta}_1(\cdot),v_1(\cdot),u_2(\cdot))$. In this case, follower's problem is said to be (uniquely) closed-loop solvable under $u_2(\cdot)\in \mathcal{U}_2[t,T]$.
	\end{mydef}
	
	After knowing follower's closed-loop optimal strategy $(\bar{\Theta}_1(\cdot),\bar{\hat{\Theta}}_1(\cdot),\bar{v}_1[\cdot](\cdot))$, the leader seeks his closed-loop optimal strategy $(\bar{\Theta}_2(\cdot),\bar{\hat{\Theta}}_2(\cdot),\bar{v}_2(\cdot))$. By substituting $u_2$ with $\Theta_2(\bar{x}^{\Theta_2,\hat{\Theta}_2,v_2}-\mathbb{E}\bar{x}^{\Theta_2,\hat{\Theta}_2,v_2})+\hat{\Theta}_2\mathbb{E}\bar{x}^{\Theta_2,\hat{\Theta}_2,v_2}+v_2$
	in (\ref{follower closed-loop state}), we can define leader's closed-loop system under leader's closed-loop strategy $(\Theta_2(\cdot),\hat{\Theta}_2(\cdot),v_2(\cdot))$, which we have denoted
	$$
	\bar{x}^{\Theta_2,\hat{\Theta}_2,v_2}(\cdot)\equiv x(\cdot;t,\xi,\bar{\Theta}_1(\cdot),\bar{\hat{\Theta}}_1(\cdot),\bar{v}_1[\Theta_2,\hat{\Theta}_2,v_2](\cdot),\Theta_2(\cdot),\hat{\Theta}_2(\cdot),v_2(\cdot))
	$$
	and similarly we can define
	$$
	J_2\big(t,\xi;\bar{\Theta}_1,\bar{\hat{\Theta}}_1,\bar{v}_1[\Theta_2,\hat{\Theta}_2,v_2],\Theta_2\big(\bar{x}^{\Theta_2,\hat{\Theta}_2,v_2}
	-\mathbb{E}\bar{x}^{\Theta_2,\hat{\Theta}_2,v_2}\big)+\hat{\Theta}_2\mathbb{E}\bar{x}^{\Theta_2,\hat{\Theta}_2,v_2}+v_2\big).
	$$
	We note the dependence of $\bar{v}_1(\cdot)$ on $(\Theta_2(\cdot),\hat{\Theta}_2(\cdot),v_2(\cdot))$. The formal definition of leader's closed-loop optimal strategy is given below, and a more precise version will be given in Section \ref{leader's problem}.
	
	\begin{mydef}
		A triple $(\bar{\Theta}_2(\cdot),\bar{\hat{\Theta}}_2(\cdot),\bar{v}_2(\cdot)) \in \mathcal{Q}_2[t,T] \times \mathcal{Q}_2[t,T] \times \mathcal{U}_2[t,T] $ is called (unique) leader's closed-loop optimal strategy on $[t,T]$, if
		\begin{equation}
			\begin{aligned}
				&J_2\big(t,\xi;\bar{\Theta}_1,\bar{\hat{\Theta}}_1,\bar{v}_1[\bar{\Theta}_2,\bar{\hat{\Theta}}_2,\bar{v}_2],\bar{\Theta}_2\big(\bar{x}
				-\mathbb{E}\bar{x}\big)+\bar{\hat{\Theta}}_2\mathbb{E}\bar{x}+\bar{v}_2\big)\\
				&\leq J_2\big(t,\xi;\bar{\Theta}_1,\bar{\hat{\Theta}}_1,\bar{v}_1[\Theta_2,\hat{\Theta}_2,v_2],\Theta_2\big(\bar{x}^{\Theta_2,\hat{\Theta}_2,v_2}
				-\mathbb{E}\bar{x}^{\Theta_2,\hat{\Theta}_2,v_2}\big)+\hat{\Theta}_2\mathbb{E}\bar{x}^{\Theta_2,\hat{\Theta}_2,v_2}+v_2\big),\\
				&\qquad\forall\, (\Theta_2(\cdot),\hat{\Theta}_2(\cdot),v_2(\cdot))\in \mathcal{Q}_2[t,T] \times \mathcal{Q}_2[t,T] \times \mathcal{U}_2[t,T],
			\end{aligned}
		\end{equation}
		where $\bar{x}(\cdot)\equiv\bar{x}^{\bar{\Theta}_2,\bar{\hat{\Theta}}_2,\bar{v}_2}(\cdot)$.
	\end{mydef}
	
	Moreover, we have
	\begin{equation*}
		\begin{aligned}
			\bar{u}_1(\cdot)&=\bar{\Theta}_1(\cdot)\big(\bar{x}(\cdot)-\mathbb{E}\bar{x}(\cdot)\big)+\bar{\hat{\Theta}}_1(\cdot)\mathbb{E}\bar{x}(\cdot)
			+\bar{v}_1[\bar{\Theta}_2\big(\bar{x}-\mathbb{E}\bar{x}\big)+\bar{\hat{\Theta}}_2\mathbb{E}\bar{x}+\bar{v}_2](\cdot)\\
			&:=\big[\bar{\Theta}_1(\cdot)+\bar{\Gamma}_1[\bar{\Theta}_2,\bar{\hat{\Theta}}_2,\bar{v}_2](\cdot)\big]\big(\bar{x}(\cdot)-\mathbb{E}\bar{x}(\cdot)\big)\\
			&\quad +\big[\bar{\hat{\Theta}}_1(\cdot)+\bar{\Gamma}_2[\bar{\Theta}_2,\bar{\hat{\Theta}}_2,\bar{v}_2](\cdot)\big]\mathbb{E}\bar{x}(\cdot)
			+\bar{\Gamma}_3[\bar{\Theta}_2,\bar{\hat{\Theta}}_2,\bar{v}_2](\cdot)\\
			&:=\bar{\Xi}_1(\cdot)\big(\bar{x}(\cdot)-\mathbb{E}\bar{x}(\cdot)\big)+\bar{\hat{\Xi}}_1(\cdot)\mathbb{E}\bar{x}(\cdot)+\bar{V}_1(\cdot),
		\end{aligned}
	\end{equation*}
	then the 6-tuple $(\bar{\Xi}_1(\cdot),\bar{\hat{\Xi}}_1(\cdot),\bar{V}_1(\cdot),\bar{\Theta}_2(\cdot),\bar{\hat{\Theta}}_2(\cdot),\bar{v}_2(\cdot))$ is called a (unique) closed-loop Stackelberg equilibrium of our mean-field type LQ Stackelberg stochastic differential game on $[t,T]$.
	
	\section{Optimization problem of the follower}\label{follower's problem}
	
	In this section, we consider the optimization problem of the follower. For any given $u_2(\cdot) \in \mathcal{U}_2[t,T]$, the follower wants to solve the following mean-field type SLQ optimal control problem.
	
	\textbf{Problem (MF-SLQ)$_f$}. For any initial pair $(t,\xi) \in [0,T]\times L^2_{\mathcal{F}_t}(\Omega;\mathbb{R}^n)$, and given $u_2(\cdot) \in \mathcal{U}_2[t,T]$, find $\bar{u}_1(\cdot) \in \mathcal{U}_1[t,T]$ such that
	\begin{equation}\label{follower problem}
		J_1(t,\xi;\bar{u}_1(\cdot),u_2(\cdot))=\underset{u_1(\cdot)\in\, \mathcal{U}_1[t,T]} {\min}J_1(t,\xi;u_1(\cdot),u_2(\cdot)) \equiv V_1(t,\xi;u_2(\cdot)).
	\end{equation}
	
	It is worth noting that the strategy of the follower depends on the choice of the leader. If there exists a (unique) $\bar{u}_1(\cdot) \in \mathcal{U}_1[t,T]$ such that (\ref{follower problem}) holds, we say that Problem (MF-SLQ)$_f$ is (uniquely) open-loop solvable. $\bar{u}_1(\cdot)$ is called an (unique) open-loop optimal control, and $(\bar{x}^{u_2}(\cdot),\bar{u}_1(\cdot))\equiv(x(\cdot;t,\xi,\bar{u}_1(\cdot),u_2(\cdot)),\bar{u}_1(\cdot))$ is called an open-loop optimal pair of Problem (MF-SLQ)$_f$. The map $V_1(\cdot,\cdot; u_2(\cdot))$ is called the value function of Problem (MF-SLQ)$_f$.
	
	In particular, we denote the follower's problem as \textbf{Problem (MF-SLQ)$_{f0}$} and the cost functional as $J_1^0(t,\xi;u_1(\cdot),0)$, when $b=\sigma=g^1=\hat{g}^1=q^1=\rho^1_1=\rho^1_2=0$ and $u_2(\cdot)\equiv0$.
	
	The following result is very similar to \cite{Sun2017}, which gives sufficient and necessary conditions for the open-loop solvability for Problem (MF-SLQ)$_f$.
	
	\begin{mylem}\label{follower open-loop ns condition}
		Let (H1)-(H2) hold. For any given $u_2(\cdot) \in \mathcal{U}_2[t,T]$ and any initial pair $(t,\xi) \in [0,T]\times L^2_{\mathcal{F}_t}(\Omega;\mathbb{R}^n)$, a control $\bar{u}_1(\cdot) \in \mathcal{U}_1[t,T]$ is an open-loop optimal control of Problem (MF-SLQ)$_f$ if and only if the following holds:
		\begin{equation}\label{follower open-loop stationarity condition}
			\begin{aligned}		
				&0=B_1^\top(\bar{y}-\mathbb{E}\bar{y})+\boldsymbol{B}_1^\top\mathbb{E}\bar{y}+D^\top_1(\bar{z}-\mathbb{E}\bar{z})+\boldsymbol{D}_1^\top\mathbb{E}\bar{z}
				+S^1_1(\bar{x}^{u_2}-\mathbb{E}\bar{x}^{u_2})+\boldsymbol{S}^1_1\mathbb{E}\bar{x}^{u_2}\\
				&\quad +R^1_{11}(\bar{u}_1-\mathbb{E}\bar{u}_1)+\boldsymbol{R}^1_{11}\mathbb{E}\bar{u}_1+R^1_{21}u_2+\hat{R}^1_{21}\mathbb{E}u_2+\rho^1_1,\quad a.e.,\, \mathbb{P}\mbox{-}a.s.,
			\end{aligned}
		\end{equation}
		where $(\bar{x}^{u_2}(\cdot),\bar{y}(\cdot),\bar{z}(\cdot)) \in L^2_{\mathbb{F}}(t,T;\mathbb{R}^n) \times L^2_{\mathbb{F}}(t,T;\mathbb{R}^n) \times L^2_{\mathbb{F}}(t,T;\mathbb{R}^n)$ is the adapted solution to the following MF-FBSDE:
		\begin{equation}\label{open-loop Hamiltonian}
			\hspace{-1.8mm}\left\{\begin{aligned}
				d\bar{x}^{u_2}(s)&=\big\{A(\bar{x}^{u_2}-\mathbb{E}\bar{x}^{u_2})+\boldsymbol{A}\mathbb{E}\bar{x}^{u_2}+B_1(\bar{u}_1-\mathbb{E}\bar{u}_1)
				+\boldsymbol{B}_1\mathbb{E}\bar{u}_1+B_2u_2+\hat{B}_2\mathbb{E}u_2+b \big\}ds\\
				&\quad +\big\{C(\bar{x}^{u_2}-\mathbb{E}\bar{x}^{u_2})+\boldsymbol{C}\mathbb{E}\bar{x}^{u_2}+D_1(\bar{u}_1-\mathbb{E}\bar{u}_1)+\boldsymbol{D}_1\mathbb{E}\bar{u}_1+D_2u_2
				+\hat{D}_2\mathbb{E}u_2+\sigma\big\}dW(s),\\
				-d\bar{y}(s)&=\big\{A^\top(\bar{y}-\mathbb{E}\bar{y})+\boldsymbol{A}^\top\mathbb{E}\bar{y}+C^\top(\bar{z}-\mathbb{E}\bar{z})+\boldsymbol{C}^\top\mathbb{E}\bar{z}+Q^1(\bar{x}^{u_2}-\mathbb{E}\bar{x}^{u_2})\\
				&\qquad +\boldsymbol{Q}^1\mathbb{E}\bar{x}^{u_2}+S^{1\top}_1(\bar{u}_1-\mathbb{E}\bar{u}_1)+{\boldsymbol{S}^1_1}^\top\mathbb{E}\bar{u}_1 +S^{1\top}_2u_2+\hat{S}^{1\top}_2\mathbb{E}u_2+q^1\big\}ds-\bar{z}dW(s),\\
				\bar{x}^{u_2}(t)&=\xi,\quad \bar{y}(T)=G^1\big(\bar{x}^{u_2}(T)-\mathbb{E}\bar{x}^{u_2}(T)\big)+\boldsymbol{G}^1\mathbb{E}\bar{x}^{u_2}(T)+\boldsymbol{g}^1,
			\end{aligned}\right.
		\end{equation}
		and the following convexity condition holds: For any $u_1(\cdot) \in \mathcal{U}_1[t,T]$,
		\begin{equation}
			\begin{aligned}
				&\mathbb{E}\bigg\{ \big\langle G^1\big(x_0(T)-\mathbb{E}x_0(T)\big),x_0(T)-\mathbb{E}x_0(T)\big\rangle+\big\langle \boldsymbol{G}^1\mathbb{E}x_0(T),\mathbb{E}x_0(T)\big\rangle\\
				&\quad +\int_0^T \Big[\big\langle Q^1(x_0-\mathbb{E}x_0),x_0-\mathbb{E}x_0\big\rangle +\big\langle \boldsymbol{Q}^1\mathbb{E}x_0,\mathbb{E}x_0 \big\rangle
				+2\big\langle S^1_1(x_0-\mathbb{E}x_0),u_1-\mathbb{E}u_1 \big\rangle\\
				&\qquad\qquad +2\big\langle \boldsymbol{S}^1_1\mathbb{E}x_0,\mathbb{E}u_1 \big\rangle +\big\langle R^1_{11}(u_1-\mathbb{E}u_1),u_1-\mathbb{E}u_1 \big\rangle
				+\big\langle \boldsymbol{R}^1_{11}\mathbb{E}u_1,\mathbb{E}u_1 \big\rangle\Big]ds\bigg\} \geq 0,
			\end{aligned}
		\end{equation}
		where $x_0(\cdot) \in L^2_{\mathbb{F}}(t,T;\mathbb{R}^n)$ is the adapted solution to the following MF-SDE:
		\begin{equation}\left\{
			\begin{aligned}
				dx_0(s)&=\big\{A(x_0-\mathbb{E}x_0)+\boldsymbol{A}\mathbb{E}x_0+B_1(u_1-\mathbb{E}u_1)+\boldsymbol{B}_1\mathbb{E}u_1\big\}ds\\
				&\quad+\big\{C(x_0-\mathbb{E}x_0)+\boldsymbol{C}\mathbb{E}x_0+D_1(u_1-\mathbb{E}u_1)+\boldsymbol{D}_1\mathbb{E}u_1\big\}dW(s),\\
				x_0(t)&=0.
			\end{aligned}\right.
		\end{equation}
	\end{mylem}
	
	The following result is directly from Proposition 2.4 of \cite{LSY2016}.
	
	\begin{mylem}\label{relation}
		Let (H1)-(H2) hold and let $(\bar{\Theta}_1(\cdot),\bar{\hat{\Theta}}_1(\cdot),\bar{v}_1(\cdot)) \in \mathcal{Q}_1[t,T] \times\mathcal{Q}_1[t,T] \times \mathcal{U}_1[t,T]$. Then for any given $u_2(\cdot) \in \mathcal{U}_2[t,T]$, the following statements are equivalent:\\
		(\romannumeral 1) $(\bar{\Theta}_1(\cdot),\bar{\hat{\Theta}}_1(\cdot),\bar{v}_1(\cdot))$ is a closed-loop optimal strategy of Problem (MF-SLQ)$_f$ on $[t,T]$.\\
		(\romannumeral 2) For any initial pair $(t,\xi) \in [0,T]\times L^2_{\mathcal{F}_t}(\Omega;\mathbb{R}^n)$ and any $v_1(\cdot) \in \mathcal{U}_1[t,T]$,
		\begin{equation*}
			J_1\big(t,\xi;\bar{\Theta}_1(\bar{x}^{u_2}-\mathbb{E}\bar{x}^{u_2})+\bar{\hat{\Theta}}_1\mathbb{E}\bar{x}^{u_2}+\bar{v}_1,u_2(\cdot)\big)
			\leq J_1\big(t,\xi;\bar{\Theta}_1(x^{u_2}-\mathbb{E}x^{u_2})+\bar{\hat{\Theta}}_1\mathbb{E}x^{u_2}+v_1,u_2(\cdot)\big),
		\end{equation*}
		where $\bar{x}^{u_2}(\cdot)\equiv x(\cdot;t,\xi,\bar{\Theta}_1(\cdot),\bar{\hat{\Theta}}_1(\cdot),\bar{v}_1(\cdot),u_2(\cdot))$
		and $x^{u_2}(\cdot)\equiv x(\cdot;t,\xi,\bar{\Theta}_1(\cdot),\bar{\hat{\Theta}}_1(\cdot),v_1(\cdot),u_2(\cdot))$.\\
		(\romannumeral 3) For any initial pair $(t,\xi) \in [0,T]\times L^2_{\mathcal{F}_t}(\Omega;\mathbb{R}^n)$ and any $u_1(\cdot) \in \mathcal{U}_1[t,T]$,
		\begin{equation*}
			J_1\big(t,\xi;\bar{\Theta}_1(\bar{x}^{u_2}-\mathbb{E}\bar{x}^{u_2})+\bar{\hat{\Theta}}_1\mathbb{E}\bar{x}^{u_2}+\bar{v}_1,u_2(\cdot)\big) \leq J_1(t,\xi;u_1(\cdot),u_2(\cdot)),
		\end{equation*}
		where $\bar{x}^{u_2}(\cdot)\equiv x(\cdot;t,\xi,\bar{\Theta}_1(\cdot),\bar{\hat{\Theta}}_1(\cdot),\bar{v}_1(\cdot),u_2(\cdot))$.	
	\end{mylem}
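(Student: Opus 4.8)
The plan is to establish the cycle of implications $(\mathrm{i})\Rightarrow(\mathrm{ii})\Leftrightarrow(\mathrm{iii})\Rightarrow(\mathrm{i})$, in which the equivalence $(\mathrm{ii})\Leftrightarrow(\mathrm{iii})$ carries essentially all the content while the two outer implications become nearly immediate once it is available. The organizing device is a substitution that, for the \emph{fixed} feedback gains $\bar{\Theta}_1(\cdot)$ and $\bar{\hat{\Theta}}_1(\cdot)$, sets up a bijection between admissible open-loop controls $u_1(\cdot)\in\mathcal{U}_1[t,T]$ and admissible inhomogeneous terms $v_1(\cdot)\in\mathcal{U}_1[t,T]$, under which the induced state--control pair, and hence the value of $J_1$, is unchanged.

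Concretely, I would first record the substitution. Given $v_1(\cdot)\in\mathcal{U}_1[t,T]$, let $x^{u_2}(\cdot)$ solve the follower closed-loop system \eqref{follower closed-loop state} with gains $(\bar{\Theta}_1,\bar{\hat{\Theta}}_1)$ and set $u_1=\bar{\Theta}_1\big(x^{u_2}-\mathbb{E}x^{u_2}\big)+\bar{\hat{\Theta}}_1\mathbb{E}x^{u_2}+v_1$; conversely, given $u_1(\cdot)\in\mathcal{U}_1[t,T]$, let $x(\cdot)$ solve the original state equation \eqref{state} under $(u_1,u_2)$ and set $v_1=u_1-\bar{\Theta}_1\big(x-\mathbb{E}x\big)-\bar{\hat{\Theta}}_1\mathbb{E}x$. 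Substituting the feedback form of $u_1$ into \eqref{state} turns it into exactly \eqref{follower closed-loop state} with inhomogeneous term $v_1$, so by uniqueness of the MF-SDE solution the trajectory coincides whether it is read as the open-loop outcome of $u_1$ or the closed-loop outcome of $v_1$; the two assignments are therefore mutually inverse. The one point requiring verification---and the only genuine obstacle---is admissibility in both directions. This uses that, under (H1)--(H2), the solution of \eqref{state} is continuous in $L^2$, so that $\sup_{s\in[t,T]}\mathbb{E}|x(s)|^2<\infty$; together with $\bar{\Theta}_1,\bar{\hat{\Theta}}_1\in L^2(t,T;\mathbb{R}^{m_1\times n})$ this yields $\mathbb{E}\int_t^T|\bar{\Theta}_1(s)\big(x(s)-\mathbb{E}x(s)\big)|^2ds\leqslant\big(\sup_{s}\mathbb{E}|x(s)|^2\big)\int_t^T|\bar{\Theta}_1(s)|^2ds<\infty$, and similarly for the $\bar{\hat{\Theta}}_1\mathbb{E}x$ term, so that the feedback part lies in $L^2_{\mathbb{F}}(t,T;\mathbb{R}^{m_1})$ and both $v_1$ and $u_1$ remain in $\mathcal{U}_1[t,T]$.

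With the bijection in hand, the three implications follow quickly. For $(\mathrm{i})\Rightarrow(\mathrm{ii})$, statement $(\mathrm{ii})$ is precisely the defining inequality of $(\mathrm{i})$ restricted to competitors that keep the gains equal to $\bar{\Theta}_1,\bar{\hat{\Theta}}_1$ and vary only $v_1$. For $(\mathrm{ii})\Leftrightarrow(\mathrm{iii})$, I observe that $J_1$ depends only on the state--control pair: the left-hand sides of $(\mathrm{ii})$ and $(\mathrm{iii})$ are identical (both the closed-loop outcome at $\bar{v}_1$), while by the bijection the right-hand side of $(\mathrm{ii})$, as $v_1$ ranges over $\mathcal{U}_1[t,T]$, realizes exactly the same set of state--control pairs, with equal cost values, as the right-hand side of $(\mathrm{iii})$ as $u_1$ ranges over $\mathcal{U}_1[t,T]$, so the two inequalities assert the same thing. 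Finally, for $(\mathrm{iii})\Rightarrow(\mathrm{i})$, given any competitor $(\Theta_1,\hat{\Theta}_1,v_1)$ its closed-loop outcome $x^{u_2}$ together with the induced control $u_1=\Theta_1\big(x^{u_2}-\mathbb{E}x^{u_2}\big)+\hat{\Theta}_1\mathbb{E}x^{u_2}+v_1\in\mathcal{U}_1[t,T]$ is an admissible open-loop pair for \eqref{state}, so $(\mathrm{iii})$ bounds its cost below by the closed-loop outcome at $\bar{v}_1$, which is exactly the inequality in the definition of $(\mathrm{i})$. This closes the cycle and proves the equivalence.
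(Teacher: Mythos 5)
Your proposal is correct. Note that the paper itself offers no proof of this lemma: it is quoted verbatim from Li, Sun and Yong \cite{LSY2016}, so there is nothing in the paper to compare against line by line. Your argument is essentially the standard one used in that reference (and in Sun--Yong's related work): for fixed gains $(\bar{\Theta}_1,\bar{\hat{\Theta}}_1)$, the substitution $u_1=\bar{\Theta}_1\big(x-\mathbb{E}x\big)+\bar{\hat{\Theta}}_1\mathbb{E}x+v_1$ versus $v_1=u_1-\bar{\Theta}_1\big(x-\mathbb{E}x\big)-\bar{\hat{\Theta}}_1\mathbb{E}x$ is a cost-preserving bijection of $\mathcal{U}_1[t,T]$ onto itself, by uniqueness of solutions of the MF-SDE, which gives (\romannumeral 2)$\Leftrightarrow$(\romannumeral 3); the implication (\romannumeral 1)$\Rightarrow$(\romannumeral 2) is restriction of competitors, and (\romannumeral 3)$\Rightarrow$(\romannumeral 1) follows by reading any closed-loop outcome as an admissible open-loop control. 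You also correctly identified and handled the one nontrivial point, namely admissibility in both directions: since $\bar{\Theta}_1,\bar{\hat{\Theta}}_1$ are deterministic and lie only in $L^2(t,T;\mathbb{R}^{m_1\times n})$ (not $L^\infty$), the bound $\mathbb{E}\int_t^T|\bar{\Theta}_1(s)(x(s)-\mathbb{E}x(s))|^2ds\leqslant\big(\sup_{s\in[t,T]}\mathbb{E}|x(s)|^2\big)\int_t^T|\bar{\Theta}_1(s)|^2ds$ is exactly what keeps the feedback part in $L^2_{\mathbb{F}}(t,T;\mathbb{R}^{m_1})$; the same estimate, with $\Theta_1$ in place of $\bar{\Theta}_1$, is implicitly needed in your step (\romannumeral 3)$\Rightarrow$(\romannumeral 1) and holds for the same reason, so no gap remains.
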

	
	\begin{Remark}
		According to the equivalence relation of (\romannumeral 1) and (\romannumeral 2), $(\bar{\Theta}_1(\cdot),\bar{\hat{\Theta}}_1(\cdot),\bar{v}_1(\cdot)) \in \mathcal{Q}_1[t,T] \times\mathcal{Q}_1[t,T] \times \mathcal{U}_1[t,T]$ is the closed-loop optimal strategy of Problem (MF-SLQ)$_f$  if and only if $\bar{v}_1(\cdot)$ is the open-loop optimal control of the mean-field type SLQ optimal control problem whose systems is (\ref{follower closed-loop state}) and (\ref{follower closed-loop cost}) with $\Theta_1(\cdot)=\bar{\Theta}_1(\cdot)$ and $\hat{\Theta}_1(\cdot)=\bar{\hat{\Theta}}_1(\cdot)$. Let us label this stochastic optimal control problem for $v_1(\cdot)$ as \textbf{Problem (MF-SLQ)$^{\bar{\Theta}_1,\bar{\hat{\Theta}}_1}_f$}.
	\end{Remark}
	
	By Lemma \ref{follower open-loop ns condition}, when $(\bar{\Theta}_1(\cdot),\bar{\hat{\Theta}}_1(\cdot),\bar{v}_1(\cdot))$ is the closed-loop optimal strategy of Problem (MF-SLQ)$_f$, we obtain the following optimality system:
	\begin{equation}\label{follower optimal system}
		\left\{\begin{aligned}
			d\bar{x}^{u_2}(s)&=\big\{(A+B_1\bar{\Theta}_1)(\bar{x}^{u_2}-\mathbb{E}\bar{x}^{u_2})+(\boldsymbol{A}+\boldsymbol{B}_1\bar{\hat{\Theta}}_1)\mathbb{E}\bar{x}^{u_2}
			+B_1(\bar{v}_1-\mathbb{E}\bar{v}_1)+\boldsymbol{B}_1\mathbb{E}\bar{v}_1\\
			&\qquad +B_2u_2+\hat{B}_2\mathbb{E}u_2+b\big\}ds+\big\{(C+D_1\bar{\Theta}_1)(\bar{x}^{u_2}-\mathbb{E}\bar{x}^{u_2})+(\boldsymbol{C}+\boldsymbol{D}_1\bar{\hat{\Theta}}_1)\mathbb{E}\bar{x}^{u_2}\\
			&\qquad +D_1(\bar{v}_1-\mathbb{E}\bar{v}_1)+\boldsymbol{D}_1\mathbb{E}\bar{v}_1+D_2u_2+\hat{D}_2\mathbb{E}u_2+\sigma\big\}dW(s),\\
			-d\bar{y}^{u_2}(s)&=\big\{(A+B_1\bar{\Theta}_1)^\top(\bar{y}^{u_2}-\mathbb{E}\bar{y}^{u_2})+(\boldsymbol{A}+\boldsymbol{B}_1\bar{\hat{\Theta}}_1)^\top\mathbb{E}\bar{y}^{u_2}
			+(C+D_1\bar{\Theta}_1)^\top(\bar{z}^{u_2}-\mathbb{E}\bar{z}^{u_2})\\
			&\qquad +(\boldsymbol{C}+\boldsymbol{D}_1\bar{\hat{\Theta}}_1)^\top\mathbb{E}\bar{z}^{u_2}+(Q^1+S^1_1\bar{\Theta}_1
			+\bar{\Theta}_1^\top S^1_1+\bar{\Theta}_1R^1_{11}\bar{\Theta}_1)(\bar{x}^{u_2}-\mathbb{E}\bar{x}^{u_2})\\	
			&\qquad +(\boldsymbol{Q}^1+\bar{\hat{\Theta}}^\top_1\boldsymbol{S}^1_1+{\boldsymbol{S}^1_1}^\top\bar{\hat{\Theta}}_1
			+\bar{\hat{\Theta}}^\top_1\boldsymbol{R}^1_{11}\bar{\hat{\Theta}}_1)\mathbb{E}\bar{x}^{u_2}+(S^1_1+R^1_{11}\bar{\Theta}_1)^\top(\bar{v}_1-\mathbb{E}\bar{v}_1)\\
			&\qquad +(\boldsymbol{S}^1_1+\boldsymbol{R}^1_{11}\bar{\hat{\Theta}}_1)^\top\mathbb{E}\bar{v}_1+(S^1_2+R^1_{12}\bar{\Theta}_1)^\top(u_2-\mathbb{E}u_2)
			+(\boldsymbol{S}^1_2+\boldsymbol{R}^1_{12}\bar{\hat{\Theta}}_1)^\top\mathbb{E}u_2\\
			&\qquad +\bar{\Theta}^\top_1(\rho_1^1-\mathbb{E}\rho_1^1)+\bar{\hat{\Theta}}_1^\top\mathbb{E}\rho_1^1+q^1\big\}ds-\bar{z}^{u_2}dW(s),\\
			\bar{x}^{u_2}(t)&=\xi,\quad \bar{y}^{u_2}(T)=G^1\big(\bar{x}^{u_2}(T)-\mathbb{E}\bar{x}^{u_2}(T)\big)+\boldsymbol{G}^1\mathbb{E}\bar{x}^{u_2}(T)+\boldsymbol{g}^1,\\
		\end{aligned}\right.
	\end{equation}
	with
	\begin{equation}\label{closed-loop stationarity condition of follower}
		\begin{aligned}
			0=&\ B^\top_1(\bar{y}^{u_2}-\mathbb{E}\bar{y}^{u_2})+\boldsymbol{B}_1^\top\mathbb{E}\bar{y}^{u_2}+D^\top_1(\bar{z}^{u_2}-\mathbb{E}\bar{z}^{u_2})+\boldsymbol{D}_1^\top\mathbb{E}\bar{z}^{u_2}\\
			&+(S^1_1+R^1_{11}\bar{\Theta}_1)(\bar{x}^{u_2}-\mathbb{E}\bar{x}^{u_2})+(\boldsymbol{S}^1_1+\boldsymbol{R}^1_{11}\bar{\hat{\Theta}}_1)\mathbb{E}\bar{x}^{u_2}+R^1_{11}(\bar{v}_1-\mathbb{E}\bar{v}_1)\\
			&+\boldsymbol{R}^1_{11}\mathbb{E}\bar{v}_1+R^1_{21}(u_2-\mathbb{E}u_2)+\boldsymbol{R}^1_{21}\mathbb{E}u_2+\rho_1^1,\quad a.e.,\, \mathbb{P}\mbox{-}a.s..
		\end{aligned}
	\end{equation}
	
	\begin{Remark}
		From the equivalence of (\romannumeral 1) and (\romannumeral 3), if Problem (MF-SLQ)$_f$ is closed-loop solvable, it must be open-loop solvable and the outcome $\bar{u}_1(\cdot)=\bar{\Theta}_1(\cdot)\big(\bar{x}^{u_2}(\cdot)-\mathbb{E}\bar{x}^{u_2}(\cdot)\big)+\bar{\hat{\Theta}}_1(\cdot)\mathbb{E}\bar{x}^{u_2}(\cdot)+\bar{v}_1(\cdot)$ is its open-loop optimal control.
	\end{Remark}
	
	Similar to Proposition 3.1 of \cite{LSY2016}, we have the following Lemma.
	
	\begin{mylem}\label{transformation}
		Let (H1)-(H2) hold. If $(\bar{\Theta}_1(\cdot),\bar{\hat{\Theta}}_1(\cdot),\bar{v}_1(\cdot)) \in \mathcal{Q}_1[t,T] \times\mathcal{Q}_1[t,T] \times \mathcal{U}_1[t,T]$ is the closed-loop optimal strategy of Problem (MF-SLQ)$_f$, then $(\bar{\Theta}_1(\cdot),\bar{\hat{\Theta}}_1(\cdot),0)$ is the closed-loop optimal strategy of Problem (MF-SLQ)$_{f0}$ on $[t,T]$.
	\end{mylem}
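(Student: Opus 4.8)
The plan is to reduce the closed-loop optimality of $(\bar{\Theta}_1(\cdot),\bar{\hat{\Theta}}_1(\cdot),\bar{v}_1(\cdot))$ to the open-loop optimality of a single admissible control via Lemma \ref{relation}, and then to exploit the requirement that a closed-loop strategy be independent of the initial state. Concretely, by the equivalence (\romannumeral 1)$\,\Leftrightarrow\,$(\romannumeral 2) of Lemma \ref{relation}, the hypothesis is the same as saying that the fixed process $\bar{v}_1(\cdot)$ minimizes the reformulated cost functional (\ref{follower closed-loop cost}), taken with $\Theta_1=\bar{\Theta}_1$ and $\hat{\Theta}_1=\bar{\hat{\Theta}}_1$, simultaneously for every initial pair $(t,\xi)$. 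First I would observe that, since $v_1(\cdot)\mapsto x^{u_2}(\cdot)$ is affine and the cost is quadratic, this reformulated functional has the form
\begin{equation}\nonumber
	\tilde{J}^{\xi}(v_1)=\big\langle\mathcal{M}v_1,v_1\big\rangle+2\big\langle\ell(\xi),v_1\big\rangle+c(\xi),
\end{equation}
where the self-adjoint operator $\mathcal{M}$ is assembled only from the coefficients of (\ref{state}), the weighting matrices $G^1,\hat{G}^1,Q^1,\hat{Q}^1,S^1_1,\hat{S}^1_1,R^1_{11},\hat{R}^1_{11}$ and the fixed gains $\bar{\Theta}_1,\bar{\hat{\Theta}}_1$; in particular $\mathcal{M}$ is unchanged when the inhomogeneous data and $u_2$ are switched off. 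The gradient splits as $\ell(\xi)=\ell^{s}(\xi)+\ell^{d}$, where $\ell^{s}(\xi)$ gathers the cross terms between the $\xi$-driven part of the state and $v_1$ (so it is linear in $\xi$ with $\ell^{s}(0)=0$), while $\ell^{d}$ collects the contributions of $b,\sigma,g^1,\hat{g}^1,q^1,\rho^1_1,\rho^1_2$ and $u_2$.

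The decisive step, and the one I expect to be the crux, is to show that the $\xi$-dependent part $\ell^{s}$ vanishes identically. Here I would argue that the minimality of $\bar{v}_1$ gives $\mathcal{M}\geqslant0$ (a second-order necessary condition) together with the first-order condition $\mathcal{M}\bar{v}_1+\ell(\xi)=0$; but $\bar{v}_1(\cdot)$ is one and the same process for all $\xi$, so $\mathcal{M}\bar{v}_1$ does not depend on $\xi$, forcing $\ell(\xi)$ to be constant in $\xi$ and hence $\ell^{s}(\xi)\equiv\ell^{s}(0)=0$. This is precisely the place where the requirement that the closed-loop strategy be independent of the initial state is essential, and it is the heart of the argument.

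Finally I would specialize to Problem (MF-SLQ)$_{f0}$, for which $b=\sigma=g^1=\hat{g}^1=q^1=\rho^1_1=\rho^1_2=0$ and $u_2=0$, so that $\ell^{d}=0$ while $\mathcal{M}$ and $\ell^{s}$ are unchanged. Combining $\ell^{d}=0$ with the previous step gives $\ell\equiv0$ for the homogeneous problem, whence
\begin{equation}\nonumber
	\tilde{J}^{0}(v_1)=\big\langle\mathcal{M}v_1,v_1\big\rangle+c_0\geqslant c_0=\tilde{J}^{0}(0),\qquad\forall\,v_1(\cdot)\in\mathcal{U}_1[t,T],
\end{equation}
for every $(t,\xi)$, using $\mathcal{M}\geqslant0$. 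Thus $v_1(\cdot)=0$ is a minimizer of the reformulated functional of Problem (MF-SLQ)$_{f0}$ for every initial pair, and applying the implication (\romannumeral 2)$\,\Rightarrow\,$(\romannumeral 1) of Lemma \ref{relation} to this homogeneous problem yields that $(\bar{\Theta}_1(\cdot),\bar{\hat{\Theta}}_1(\cdot),0)$ is a closed-loop optimal strategy of Problem (MF-SLQ)$_{f0}$ on $[t,T]$. The only routine work left is to verify the claimed affine/quadratic structure and the data-independence of $\mathcal{M}$ by expanding (\ref{follower closed-loop cost}), which is a direct computation.
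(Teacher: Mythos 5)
Your proposal is correct and takes essentially the same route as the paper's own treatment: the paper states this lemma without a written proof, invoking Proposition 3.1 of Li et al. \cite{LSY2016}, and that argument is precisely yours — reduce via Lemma \ref{relation} to open-loop optimality of $\bar{v}_1$ for the reformulated quadratic cost, use the fact that the first-order condition $\mathcal{M}\bar{v}_1+\ell(\xi)=0$ holds with one and the same $\bar{v}_1$ for every initial state to force the $\xi$-dependent part of the gradient to vanish, and then combine $\mathcal{M}\geqslant 0$ with the vanishing gradient of the homogeneous problem to conclude that $v_1=0$ is optimal there. No gaps; the remaining verifications (affine state map, quadratic cost, data-independence of $\mathcal{M}$ and of the cross term) are routine, as you say.
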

	
	\begin{Remark}
		We note that in \cite{LSY2016}, they first use Proposition 3.1 to transform the original problem into a homogeneous mean-field type SLQ optimal control problem, then they further transform this problem into a deterministic optimal control problem with two control variables. However, in this paper, our work is to transform the closed-loop solvability problem of the follower into an open-loop solvability problem of an ideal mean-field type SLQ optimal control problem whose open-loop optimal control is known, which is different from \cite{LSY2016}. See below for details.
	\end{Remark}
	
	Now let us consider Problem (MF-SLQ)$_{f0}$ with its closed-loop optimal strategy $(\bar{\Theta}_1(\cdot),\bar{\hat{\Theta}}_1(\cdot),\\0)$. Using the equivalence relationship of (\romannumeral 1) and (\romannumeral 2) in Lemma \ref{relation} again, we can know that $\bar{v}_1(\cdot)=0$ is the open-loop optimal control of the following mean-field type SLQ optimal control problem. The state equation is
	\begin{equation}\label{Problem follower (f0) closed-loop state}
		\left\{\begin{aligned}
			dx^0(s)&=\big\{(A+B_1\bar{\Theta}_1)(x^0-\mathbb{E}x^0)+B_1(v_1-\mathbb{E}v_1)+(\boldsymbol{A}+\boldsymbol{B}_1\bar{\hat{\Theta}}_1)\mathbb{E}x^0+\boldsymbol{B}_1\mathbb{E}v_1\big\}ds\\
			&\quad +\big\{(C+D_1\bar{\Theta}_1)\big(x^0-\mathbb{E}x^0\big)+D_1(v_1-\mathbb{E}v_1)+(\boldsymbol{C}+\boldsymbol{D}_1\bar{\hat{\Theta}}_1)\mathbb{E}x^0+\boldsymbol{D}_1\mathbb{E}v_1\big\}dW(s),\\
			x^0(t)&=\xi,
		\end{aligned}\right.
	\end{equation}
	and the cost functional is
	\begin{equation}\label{Problem follower (f0) closed-loop cost}
		\begin{aligned}
			&\tilde{J}_1(t,\xi;v_1(\cdot))=\mathbb{E}\bigg\{\big\langle G^1\big(x^0(T)-\mathbb{E}x^0(T)\big),x^0(T)-\mathbb{E}x^0(T)\big\rangle+\big\langle \boldsymbol{G}^1\mathbb{E}x^0(T),\mathbb{E}x^0(T)\big\rangle\\
			&\quad +\int_t^T \Big[\big\langle(Q^1+\bar{\Theta}_1^\top S^1_1+S^{1\top}_1\bar{\Theta}_1+\bar{\Theta}_1^\top R^1_{11}\bar{\Theta}_1)(x^0-\mathbb{E}x^0),x^0-\mathbb{E}x^0\big\rangle\\
			&\qquad\quad +2\big\langle(S^1_1+R^1_{11}\bar{\Theta}_1)(x^0-\mathbb{E}x^0),v_1-\mathbb{E}v_1\big\rangle+\big\langle R^1_{11}(v_1-\mathbb{E}v_1),v_1-\mathbb{E}v_1\big\rangle\Big]ds\\
			&\quad +\int_t^T \Big[\big\langle(\boldsymbol{Q}^1+\bar{\hat{\Theta}}_1^\top \boldsymbol{S}^1_1+{\boldsymbol{S}^1_1}^\top\bar{\hat{\Theta}}_1
			+\bar{\hat{\Theta}}_1^\top \boldsymbol{R}^1_{11}\bar{\hat{\Theta}}_1)\mathbb{E}x^0,\mathbb{E}x^0\big\rangle\\
			&\qquad\quad +2\big\langle(\boldsymbol{S}^1_1+\boldsymbol{R}^1_{11}\bar{\hat{\Theta}}_1)\mathbb{E}x^0,\mathbb{E}v_1\big\rangle
			+\big\langle\boldsymbol{R}^1_{11}\mathbb{E}v_1,\mathbb{E}v_1\big\rangle\Big]ds\bigg\}.
		\end{aligned}
	\end{equation}
	By Lemma \ref{follower open-loop ns condition}, we can obtain the following optimality system: (Noting that $\bar{v}_1(\cdot)\equiv0$.)
	\begin{equation}\label{MF-SLQ-f0-Theta optimal system}
		\left\{
		\begin{aligned}
			d\bar{x}^0(s)&=\big\{(A+B_1\bar{\Theta}_1)(\bar{x}^0-\mathbb{E}\bar{x}^0)+(\boldsymbol{A}+\boldsymbol{B}_1\bar{\hat{\Theta}}_1)\mathbb{E}\bar{x}^0\big\}ds\\
			&\quad +\big\{(C+D_1\bar{\Theta}_1)(\bar{x}^0-\mathbb{E}\bar{x}^0)+(\boldsymbol{C}+\boldsymbol{D}_1\bar{\hat{\Theta}}_1)\mathbb{E}\bar{x}^0\big\}dW(s),\\
			-d\bar{y}^0(s)&=\big\{(A+B_1\bar{\Theta}_1)^\top(\bar{y}^0-\mathbb{E}\bar{y}^0)+(\boldsymbol{A}+\boldsymbol{B}_1\bar{\hat{\Theta}}_1)^\top\mathbb{E}\bar{y}^0+(C+D_1\bar{\Theta}_1\big)^\top\big(\bar{z}^0-\mathbb{E}\bar{z}^0)\\
			&\qquad +(\boldsymbol{C}+\boldsymbol{D}_1\bar{\hat{\Theta}}_1)^\top\mathbb{E}\bar{z}^0+(Q^1+\bar{\Theta}_1^\top S^1_1+S^{1\top}_1\bar{\Theta}_1
			+\bar{\Theta}_1^\top R^1_{11}\bar{\Theta}_1)(\bar{x}^0-\mathbb{E}\bar{x}^0)\\
			&\qquad +(\boldsymbol{Q}^1+\bar{\hat{\Theta}}_1^\top \boldsymbol{S}^1_1+{\boldsymbol{S}^1_1}^\top\bar{\hat{\Theta}}_1
			+\bar{\hat{\Theta}}_1^\top \boldsymbol{R}^1_{11}\bar{\hat{\Theta}}_1)\mathbb{E}\bar{x}^0\big\}ds-\bar{z}^0dW(s),\\
			\bar{x}^0(t)&=\xi,\quad \bar{y}^0(T)=G^1\big(\bar{x}^0(T)-\mathbb{E}\bar{x}^0(T)\big)+\boldsymbol{G}^1\mathbb{E}\bar{x}^0(T),\\
			0=&\ B_1^\top(\bar{y}^0-\mathbb{E}\bar{y}^0)+\boldsymbol{B}_1^\top\mathbb{E}\bar{y}^0+D_1^\top(\bar{z}^0-\mathbb{E}\bar{z}^0)+\boldsymbol{D}_1^\top\mathbb{E}\bar{z}^0\\
			& +(S^1_1+R^1_{11}\bar{\Theta}_1)(\bar{x}^0-\mathbb{E}\bar{x}^0)+(\boldsymbol{S}^1_1+\boldsymbol{R}^1_{11}\bar{\hat{\Theta}}_1)\mathbb{E}\bar{x}^0,\quad a.e.,\ \mathbb{P}\mbox{-}a.s..
		\end{aligned}\right.
	\end{equation}
	
	With some straightforward computation on (\ref{MF-SLQ-f0-Theta optimal system}), we have
	\begin{equation}\label{X0,X0-EX0}
		\left\{\begin{aligned}
			\mathbb{E}\bar{x}^0(s)&=\mathbb{E}\xi\cdot \exp\biggl\{\int_{t}^{s}\big(\boldsymbol{A}+\boldsymbol{B}_1\bar{\hat{\Theta}}_1\big)dr\biggr\},\\
			(\bar{x}^0-\mathbb{E}\bar{x}^0)(s)&=\frac{1}{L(s)}\biggl\{\xi-\mathbb{E}\xi-\int_t^s\lambda\mathcal{C} \cdot L(r) dr+\int_t^s \lambda \cdot L(r)dW(r)\biggr\},\quad s\in [t,T],
			\quad
		\end{aligned}\right.
	\end{equation}
	for any $\xi \in L^2_{\mathcal{F}_t}(\Omega;\mathbb{R}^n)$, where
	\begin{equation*}
		\mathcal{A}:=A+B_1\bar{\Theta}_1,\quad \mathcal{C}:=C+D_1\bar{\Theta}_1,\quad \lambda:=\big(\boldsymbol{C}+\boldsymbol{D}_1\bar{\hat{\Theta}}_1\big)\mathbb{E}\bar{x}^0,
	\end{equation*}
	and
	\begin{equation*}
		L(s):=\exp\bigg\{\int_t^s\Big(\frac{\mathcal{C}^2}{2}-\mathcal{A}\Big)dr-\int_t^s\mathcal{C}dW\bigg\},\quad s\in [t,T].
	\end{equation*}
	
	Notice that $\bar{y}^0(T)=G^1\big(\bar{x}^0(T)-\mathbb{E}\bar{x}^0(T)\big)+\boldsymbol{G}^1\mathbb{E}\bar{x}^0(T)$, we assume that
	\begin{equation*}
		\bar{y}^0(\cdot)=P_1(\cdot)(\bar{x}^0-\mathbb{E}\bar{x}^0)(\cdot)+\Pi_1(\cdot)\mathbb{E}\bar{x}^0(\cdot),
	\end{equation*}
	where $(P_1(\cdot),\Pi_1(\cdot)) \in C([t,T],\mathbb{S}^n) \times C([t,T],\mathbb{S}^n) $ with $P_1(T)=G^1$, $\Pi_1(T)=\boldsymbol{G}^1$. Then we can obtain
	\begin{equation}\label{follower decoupled relation}
		\left\{\begin{aligned}
			\mathbb{E}\bar{y}^0(\cdot)&=\Pi_1(\cdot)\mathbb{E}\bar{x}^0(\cdot),\\
			\bar{y}^0(\cdot)-\mathbb{E}\bar{y}^0(\cdot)&=P_1(\cdot)(\bar{x}^0-\mathbb{E}\bar{x}^0)(\cdot).
		\end{aligned}\right.
	\end{equation}
	Applying It\^o's formula to the second equation in (\ref{follower decoupled relation}), we have
	\begin{equation}\label{y0-Ey0}
		\begin{aligned}
			&d(\bar{y}^0-\mathbb{E}\bar{y}^0)(s)=\big\{\dot{P}_1(\bar{x}^0-\mathbb{E}\bar{x}^0)+P_1(A+B_1\bar{\Theta}_1)(\bar{x}^0-\mathbb{E}\bar{x}^0)\big\}ds\\
			&\qquad +\big\{P_1(C+D_1\bar{\Theta}_1)(\bar{x}^0-\mathbb{E}\bar{x}^0)+P_1(\boldsymbol{C}+\boldsymbol{D}_1\bar{\hat{\Theta}}_1)\mathbb{E}\bar{x}^0\big\}dW(s)\\
			&=-\big\{(A+B_1\bar{\Theta}_1)^\top(\bar{y}^0-\mathbb{E}\bar{y}^0)+(C+D_1\bar{\Theta}_1)^\top(\bar{z}^0-\mathbb{E}\bar{z}^0)\\
			&\qquad\quad +(Q^1+\bar{\Theta}_1^\top S^1_1+S^{1\top}_1\bar{\Theta}_1+\bar{\Theta}_1^\top R^1_{11}\bar{\Theta}_1)(\bar{x}^0-\mathbb{E}\bar{x}^0)\big\}ds+\bar{z}^0dW(s).
		\end{aligned}
	\end{equation}
	Thus,
	\begin{equation}\label{z0}
		\bar{z}^0(\cdot)=P_1(C+D_1\bar{\Theta}_1)(\bar{x}^0-\mathbb{E}\bar{x}^0)(\cdot)+P_1(\boldsymbol{C}+\boldsymbol{D}_1\bar{\hat{\Theta}}_1)\mathbb{E}\bar{x}^0(\cdot),\ \mathbb{P}\mbox{-}a.s..
	\end{equation}
	Then
	\begin{equation}\label{z0-Ez0,Ez0}
		\left\{\begin{aligned}
			&\mathbb{E}\bar{z}^0(\cdot)=P_1(\boldsymbol{C}+\boldsymbol{D}_1\bar{\hat{\Theta}}_1)\mathbb{E}\bar{x}^0(\cdot),\\
			&\bar{z}^0(\cdot)-\mathbb{E}\bar{z}^0(\cdot)=P_1(C+D_1\bar{\Theta}_1)(\bar{x}^0-\mathbb{E}\bar{x}^0)(\cdot),\ \mathbb{P}\mbox{-}a.s.,
		\end{aligned}\right.
	\end{equation}
	and
	\begin{equation*}
		\begin{aligned}
			0&=\big[\dot{P}_1+P_1(A+B_1\bar{\Theta}_1)+(A+B_1\bar{\Theta}_1)^\top P_1+(C+D_1\bar{\Theta}_1)^\top P_1(C+D_1\bar{\Theta}_1)\\
			&\qquad +Q^1+\bar{\Theta}_1^\top S^1_1+S^{1\top}_1\bar{\Theta}_1+\bar{\Theta}_1^\top R^1_{11}\bar{\Theta}_1\big](\bar{x}^0-\mathbb{E}\bar{x}^0),\quad a.e.\ \mathbb{P}\mbox{-}a.s..
		\end{aligned}
	\end{equation*}
	Thus, we obtain the following Lyapunov equation of $P_1(\cdot)\in C([t,T],\mathbb{S}^n)$:
	\begin{equation}\label{follower Lyapunov equation P1}
		\left\{\begin{aligned}
			&0=\dot{P}_1+P_1(A+B_1\bar{\Theta}_1)+(A+B_1\bar{\Theta}_1)^\top P_1+(C+D_1\bar{\Theta}_1)^\top P_1(C+D_1\bar{\Theta}_1)\\
			&\qquad +Q^1+\bar{\Theta}_1^\top S^1_1+S^{1\top}_1\bar{\Theta}_1+\bar{\Theta}_1^\top R^1_{11}\bar{\Theta}_1,\quad s\in[t,T],\\
			&P_1(T)=G^1.
		\end{aligned}\right.
	\end{equation}
	Similarly, using It\^o's formula to the first equation in (\ref{follower decoupled relation}), we get another Lyapunov equation of $\Pi_1(\cdot) \in C([t,T],\mathbb{S}^n)$:
	\begin{equation}\label{follower Lyapunov equation Pi1}
		\left\{\begin{aligned}
			&0=\dot{\Pi}_1+\Pi_1(\boldsymbol{A}+\boldsymbol{B}_1\bar{\hat{\Theta}}_1)+(\boldsymbol{A}+\boldsymbol{B}_1\bar{\hat{\Theta}}_1)^\top\Pi_1
			+(\boldsymbol{C}+\boldsymbol{D}_1\bar{\hat{\Theta}}_1)^\top P_1(\boldsymbol{C}+\boldsymbol{D}_1\bar{\hat{\Theta}}_1)\\
			&\qquad +\boldsymbol{Q}^1+\bar{\hat{\Theta}}_1^\top \boldsymbol{S}^1_1+{\boldsymbol{S}^1_1}^\top\bar{\hat{\Theta}}_1
			+\bar{\hat{\Theta}}_1^\top \boldsymbol{R}^1_{11}\bar{\hat{\Theta}}_1,\quad s\in[t,T],\\
			&\Pi_1(T)=\boldsymbol{G}^1.
		\end{aligned}\right.
	\end{equation}
	From the stationarity condition in (\ref{MF-SLQ-f0-Theta optimal system}), we have
	\begin{equation*}
		\left\{\begin{aligned}
			&0=\big[B^\top_1P_1+D^\top_1P_1C+S^1_1+(R^1_{11}+D^\top_1P_1D_1)\bar{\Theta}_1\big](\bar{x}^0-\mathbb{E}\bar{x}^0),\ \mathbb{P}\mbox{-}a.s.,\\
			&0=\big[\boldsymbol{B}_1^\top\Pi_1+\boldsymbol{D}_1^\top P_1\boldsymbol{C}+\boldsymbol{S}^1_1+(\boldsymbol{R}^1_{11}+\boldsymbol{D}_1^\top P_1\boldsymbol{D}_1)\bar{\hat{\Theta}}_1\big]\mathbb{E}\bar{x}^0.
		\end{aligned}\right.
	\end{equation*}
	Since these equations hold for all $(t,\xi) \in [0,T]\times L^2_{\mathcal{F}_t}(\Omega;\mathbb{R}^n)$, we must have (noting that (\ref{X0,X0-EX0}))
	\begin{equation}\label{two Theta and barTheta}
		\left\{\begin{aligned}
			&0=B^\top_1P_1+D^\top_1P_1C+S^1_1+(R^1_{11}+D^\top_1P_1D_1)\bar{\Theta}_1,\\
			&0=\boldsymbol{B}_1^\top\Pi_1+\boldsymbol{D}_1^\top P_1\boldsymbol{C}+\boldsymbol{S}^1_1+(\boldsymbol{R}^1_{11}+\boldsymbol{D}_1^\top P_1\boldsymbol{D}_1)\bar{\hat{\Theta}}_1,\quad a.e.,\,\mathbb{P}\mbox{-}a.s..
		\end{aligned}\right.
	\end{equation}
	
	Denoting $\Sigma_1\equiv R^1_{11}+D^\top_1P_1D_1$ and $\hat{\Sigma}_1\equiv \boldsymbol{R}^1_{11}+\boldsymbol{D}_1^\top P_1\boldsymbol{D}_1$, this implies
	\begin{equation}\label{amage assumption---follower}
		\mathcal{R}(B^\top_1P_1+D^\top_1P_1C+S^1_1) \subseteq \mathcal{R}(\Sigma_1),\quad
		\mathcal{R}(\boldsymbol{B}_1^\top\Pi_1+\boldsymbol{D}_1^\top P_1\boldsymbol{C}+\boldsymbol{S}^1_1)\subseteq \mathcal{R}(\hat{\Sigma}_1),\quad a.e.,\,\mathbb{P}\mbox{-}a.s..
	\end{equation}
	Moreover, since $\Sigma_1^\dagger\Sigma_1$ and $\hat{\Sigma}_1^\dagger\hat{\Sigma}_1$ are orthogonal projections, we see that
	\begin{equation}\label{space assumption---follower}
		\begin{aligned}	
			&\Sigma_1^\dagger(B^\top_1P_1+D^\top_1P_1C+S^1_1) \in L^2(t,T;\mathbb{R}^{m_1\times n}),\\
			&\hat{\Sigma}_1^\dagger(\boldsymbol{B}_1^\top\Pi_1+\boldsymbol{D}_1^\top P_1\boldsymbol{C}+\boldsymbol{S}^1_1) \in L^2(t,T;\mathbb{R}^{m_1\times n}),
		\end{aligned}
	\end{equation}
	and
	\begin{equation}\label{follower closed-loop strategy Theta1 hat{Theta}1}
		\left\{\begin{aligned}
			\bar{\Theta}_1&=-\Sigma_1^\dagger(B^\top_1P_1+D^\top_1P_1C+S^1_1)+(I-\Sigma_1^\dagger\Sigma_1)\theta_1,\\
			\bar{\hat{\Theta}}_1&=-\hat{\Sigma}_1^\dagger(\boldsymbol{B}_1^\top\Pi_1+\boldsymbol{D}_1^\top P_1\boldsymbol{C}+\boldsymbol{S}^1_1)+(I-\hat{\Sigma}_1^\dagger\hat{\Sigma}_1)\hat{\theta}_1,
		\end{aligned}\right.
	\end{equation}
	for some $\big(\theta_1(\cdot),\hat{\theta}_1(\cdot)\big)\in L^2(t,T;\mathbb{R}^{m_1\times n}) \times L^2(t,T;\mathbb{R}^{m_1\times n}) $. By substituting the above equation into (\ref{follower Lyapunov equation P1}) and (\ref{follower Lyapunov equation Pi1}), we get
	\begin{equation}\label{Riccati---1}
		\left\{\begin{aligned}
			&0=\dot{P}_1+P_1A+A^\top P_1+C^\top P_1C+Q^1\\
			&\qquad -(P_1B_1+C^\top PD_1+S^{1\top}_1)\Sigma_1^\dagger(B^\top_1P_1+D^\top_1P_1C+S^1_1),\quad s\in[t,T],\\
			&P_1(T)=G^1,
		\end{aligned}\right.
	\end{equation}
	and
	\begin{equation}\label{Riccati---2}
		\left\{\begin{aligned}
			&0=\dot{\Pi}_1+\Pi_1\boldsymbol{A}+\boldsymbol{A}^\top\Pi_1+\boldsymbol{C}^\top P_1\boldsymbol{C}+\boldsymbol{Q}^1\\
			&\qquad -(\Pi_1\boldsymbol{B}_1+\boldsymbol{C}^\top P_1\boldsymbol{D}_1+{\boldsymbol{S}^1_1}^\top)\hat{\Sigma}_1^\dagger(\boldsymbol{B}_1^\top\Pi_1+\boldsymbol{D}_1^\top P_1\boldsymbol{C}+\boldsymbol{S}^1_1),
			\quad s\in[t,T],\\
			&\Pi_1(T)=\boldsymbol{G}^1.
		\end{aligned}\right.
	\end{equation}
	Using the same method in Theorem 3.3 of \cite{LSY2016}, we can prove that
	\begin{equation}\label{assumption---follower}
		\Sigma_1 \geq 0,\quad  \hat{\Sigma}_1 \geq 0.
	\end{equation}
	
	To determine $\bar{v}_1(\cdot)$, we define
	\begin{equation}\label{follower xyz relationship}
		\left\{\begin{aligned}
			\bar{\eta}_1^{u_2}&=\bar{y}^{u_2}-P_1(\bar{x}^{u_2}-\mathbb{E}\bar{x}^{u_2})-\Pi_1\mathbb{E}\bar{x}^{u_2},\\
			\bar{\zeta}_1^{u_2}&=\bar{z}^{u_2}-P_1\big[(C+D_1\bar{\Theta}_1)(\bar{x}^{u_2}-\mathbb{E}\bar{x}^{u_2})+(\boldsymbol{C}+\boldsymbol{D}_1\bar{\hat{\Theta}}_1)\mathbb{E}\bar{x}^{u_2}\\
			&\quad +D_1(\bar{v}_1-\mathbb{E}\bar{v}_1)+\boldsymbol{D}_1\mathbb{E}\bar{v}_1+D_2(u_2-\mathbb{E}u_2)+\boldsymbol{D}_2\mathbb{E}u_2+\sigma\big].
		\end{aligned}\right.
	\end{equation}
	Then applying It\^{o}'s formula, noting (\ref{follower closed-loop strategy Theta1 hat{Theta}1}), (\ref{Riccati---1}) and (\ref{Riccati---2}), we have
		\begin{equation}\label{follower BSDE}
			\left\{\begin{aligned}
				-d\bar{\eta}_1^{u_2}(s)&=\Big\{\big[A^\top-(P_1B_1+C^\top P_1D_1+S^{1\top}_1)\Sigma_1^{\dagger}B_1^\top\big](\bar{\eta}_1^{u_2}-\mathbb{E}\bar{\eta}_1^{u_2})+\big[C^\top\\
				&\quad\ -(P_1B_1+C^\top P_1D_1+S^{1\top}_1)\Sigma_1^{\dagger}D_1^\top\big](\bar{\zeta}_1^{u_2}-\mathbb{E}\bar{\zeta}_1^{u_2})+\big[P_1B_2+C^\top P_1D_2\\
				&\quad\ +S^{1\top}_2-(P_1B_1+C^\top P_1D_1+S^{1\top}_1)\Sigma_1^\dagger(R^1_{21}+D^\top_1P_1 D_2)\big](u_2-\mathbb{E}u_2)\\
				&\quad\ +P_1(b-\mathbb{E}b)+\big[C^\top-(P_1B_1+C^\top P_1D_1+S^{1\top}_1)\Sigma_1^{\dagger}D_1^\top\big]P_1(\sigma-\mathbb{E}\sigma)\\
				&\quad\ -(P_1B_1+C^\top P_1D_1+S^{1\top}_1)\Sigma_1^{\dagger}(\rho_1^\top-\mathbb{E}\rho^\top_1)-(\Pi_1\boldsymbol{B}_1+\boldsymbol{C}^\top P_1\boldsymbol{D}_1\\
				&\quad\ +{\boldsymbol{S}^1_1}^\top)\hat{\Sigma}_1^{\dagger}\mathbb{E}\rho^\top_1+q^1+\big[\boldsymbol{A}^\top-(\Pi_1\boldsymbol{B}_1
				+\boldsymbol{C}^\top P_1\boldsymbol{D}_1+{\boldsymbol{S}^1_1}^\top)\hat{\Sigma}_1^{\dagger}\boldsymbol{B}_1^\top\big]\mathbb{E}\bar{\eta}_1^{u_2}\\
				&\quad\ +\big[\boldsymbol{C}^\top-(\Pi_1\boldsymbol{B}_1+\boldsymbol{C}^\top P_1\boldsymbol{D}_1+{\boldsymbol{S}^1_1}^\top)\hat{\Sigma}_1^\dagger\boldsymbol{D}_1^\top\big]\mathbb{E}\bar{\zeta}_1^{u_2}
				+\big[\Pi_1\boldsymbol{B}_2+\boldsymbol{C}^\top P_1\boldsymbol{D}_2\\
				&\quad\ +{\boldsymbol{S}^1_2}^\top-(\Pi_1\boldsymbol{B}_1+\boldsymbol{C}^\top P_1\boldsymbol{D}_1+{\boldsymbol{S}^1_1}^\top)\hat{\Sigma}_1^\dagger
				(\boldsymbol{R}^1_{21}+\boldsymbol{D}_1^\top P_1\boldsymbol{D}_2)\big]\mathbb{E}u_2+\big[\boldsymbol{C}^\top\\
				&\quad\ -(\Pi_1\boldsymbol{B}_1+\boldsymbol{C}^\top P_1\boldsymbol{D}_1+{\boldsymbol{S}^1_1}^\top)\hat{\Sigma}_1^\dagger
				\boldsymbol{D}_1^\top\big]P_1\mathbb{E}\sigma+\Pi_1\mathbb{E}b\Big\}ds-\bar{\zeta}_1^{u_2}dW(s),\\
				\bar{\eta}_1^{u_2}(T)&=\boldsymbol{g}^1.
			\end{aligned}\right.
		\end{equation}
		
		According to the stationarity condition (\ref{closed-loop stationarity condition of follower}) and noting (\ref{two Theta and barTheta}), (\ref{follower xyz relationship}), we have
		\begin{equation*}
			\begin{aligned}	
				0&=B_1^\top(\bar{\eta}_1^{u_2}-\mathbb{E}\bar{\eta}_1^{u_2})+\boldsymbol{B}_1^\top\mathbb{E}\bar{\eta}_1^{u_2}+D^\top_1(\bar{\zeta}_1^{u_2}-\mathbb{E}\bar{\zeta}_1^{u_2})\\
				&\quad +\boldsymbol{D}_1^\top\mathbb{E}\bar{\zeta}_1^{u_2}+(R^1_{21}+D_1^\top P_1D_2)(u_2-\mathbb{E}u_2)+(\boldsymbol{R}^1_{21}+\boldsymbol{D}_1^\top P_1\boldsymbol{D}_2)\mathbb{E}u_2\\
				&\quad +D_1^\top P_1(\sigma-\mathbb{E}\sigma)+\boldsymbol{D}_1^\top P_1\mathbb{E}\sigma+\rho_1^1+\Sigma_1(\bar{v}_1-\bar{v}_1)+\hat{\Sigma}_1\mathbb{E}\bar{v}_1,\quad a.e.,\,\mathbb{P}\mbox{-}a.s.,
			\end{aligned}
		\end{equation*}
		and thus
		\begin{equation*}
			\left\{\begin{aligned}
				0&=\boldsymbol{B}_1^\top\mathbb{E}\bar{\eta}_1^{u_2}+\boldsymbol{D}_1^\top\mathbb{E}\bar{\zeta}_1^{u_2}+\boldsymbol{D}_1^\top P_1\mathbb{E}\sigma+\mathbb{E}\rho_1^1+(\boldsymbol{R}^1_{21}
				+\boldsymbol{D}_1^\top P_1\boldsymbol{D}_2)\mathbb{E}u_2+\hat{\Sigma}_1\mathbb{E}\bar{v}_1,\\
				0&=B_1^\top(\bar{\eta}_1^{u_2}-\mathbb{E}\bar{\eta}_1^{u_2})+D^\top_1(\bar{\zeta}_1^{u_2}-\mathbb{E}\bar{\zeta}_1^{u_2})+D_1^\top P_1(\sigma-\mathbb{E}\sigma)+\rho_1^1-\mathbb{E}\rho_1^1\\
				&\quad +(R^1_{21}+D_1^\top P_1D_2)(u_2-\mathbb{E}u_2)+\Sigma_1(\bar{v}_1-\bar{v}_1),\quad a.e.,\,\mathbb{P}\mbox{-}a.s..
			\end{aligned}\right.
		\end{equation*}
		This implies
		\begin{equation}\label{amage assumption---follower---}
			\left\{\begin{aligned}
				&\mathcal{R}\big[\boldsymbol{B}_1^\top\mathbb{E}\bar{\eta}_1^{u_2}+\boldsymbol{D}_1^\top\mathbb{E}\bar{\zeta}_1^{u_2}
				+\boldsymbol{D}_1^\top P_1\mathbb{E}\sigma+\mathbb{E}\rho_1^1+(\boldsymbol{R}^1_{21}+\boldsymbol{D}_1^\top P_1\boldsymbol{D}_2)\mathbb{E}u_2\big] \subseteq \mathcal{R}(\hat{\Sigma}_1),\\
				&\mathcal{R}\big[B_1^\top(\bar{\eta}_1^{u_2}-\mathbb{E}\bar{\eta}_1^{u_2})+D^\top_1(\bar{\zeta}_1^{u_2}-\mathbb{E}\bar{\zeta}_1^{u_2})+D_1^\top P_1(\sigma-\mathbb{E}\sigma)+\rho_1^1-\mathbb{E}\rho_1^1\\
				&\quad +(R^1_{21}+D_1^\top P_1D_2)(u_2-\mathbb{E}u_2)\big] \subseteq \mathcal{R}(\Sigma_1),\quad a.e.,\,\, \mathbb{P}\mbox{-}a.s..
			\end{aligned}\right.
		\end{equation}
		Moreover, since $\Sigma_1^\dagger\Sigma_1$ and $\hat{\Sigma}_1^\dagger\hat{\Sigma}_1$ are orthogonal projections, we see that
		\begin{equation}\label{space assumption---follower---}
			\left\{\begin{aligned}
				&\hat{\Sigma}_1^\dagger\big[\boldsymbol{B}_1^\top\mathbb{E}\bar{\eta}_1^{u_2}+\boldsymbol{D}_1^\top\mathbb{E}\bar{\zeta}_1^{u_2}
				+\boldsymbol{D}_1^\top P_1\mathbb{E}\sigma+\mathbb{E}\rho_1^1+(\boldsymbol{R}^1_{21}+\boldsymbol{D}_1^\top P_1\boldsymbol{D}_2)\mathbb{E}u_2\big]\in L^2(t,T;\mathbb{R}^{m_1}),\\
				&\Sigma_1^\dagger\big[B_1^\top(\bar{\eta}_1^{u_2}-\mathbb{E}\bar{\eta}_1^{u_2})+D^\top_1(\bar{\zeta}_1^{u_2}-\mathbb{E}\bar{\zeta}_1^{u_2})+D_1^\top P_1(\sigma-\mathbb{E}\sigma)+\rho_1^1-\mathbb{E}\rho_1^1\\
				&\quad +(R^1_{21}+D_1^\top P_1D_2)(u_2-\mathbb{E}u_2)\big] \in L^2_{\mathbb{F}}(t,T;\mathbb{R}^{m_1}),
			\end{aligned}\right.
		\end{equation}
		\begin{equation}\label{follower closed-loop strategy v1}
			\left\{\begin{aligned}
				&\mathbb{E}\bar{v}_1=-\hat{\Sigma}_1^\dagger\big[\boldsymbol{B}_1^\top\mathbb{E}\bar{\eta}_1^{u_2}+\boldsymbol{D}_1^\top\mathbb{E}\bar{\zeta}_1^{u_2}
				+\boldsymbol{D}_1^\top P_1\mathbb{E}\sigma+\mathbb{E}\rho_1^1+(\boldsymbol{R}^1_{21}+\boldsymbol{D}_1^\top P_1\boldsymbol{D}_2)\mathbb{E}u_2\big]+(I-\hat{\Sigma}_1^\dagger\hat{\Sigma}_1)\hat{v}_1,\\
				&\bar{v}_1-\mathbb{E}\bar{v}_1=-\Sigma_1^\dagger\big[B_1^\top(\bar{\eta}_1^{u_2}-\mathbb{E}\bar{\eta}_1^{u_2})+D^\top_1(\bar{\zeta}_1^{u_2}-\mathbb{E}\bar{\zeta}_1^{u_2})
				+D_1^\top P_1(\sigma-\mathbb{E}\sigma)+\rho_1^1-\mathbb{E}\rho_1^1\\
				&\qquad\qquad\qquad +(R^1_{21}+D_1^\top P_1D_2)(u_2-\mathbb{E}u_2)\big]+(I-\Sigma_1^\dagger\Sigma_1)v_1,\quad a.e.,\,\mathbb{P}\mbox{-}a.s.,
			\end{aligned}\right.
		\end{equation}
		and thus
		\begin{equation}\label{follower closed-loop optimal}
			\hspace{-2mm}\begin{aligned}
				\bar{v}_1&=-\Sigma_1^\dagger\big[B_1^\top(\bar{\eta}_1^{u_2}-\mathbb{E}\bar{\eta}_1^{u_2})+D^\top_1(\bar{\zeta}_1^{u_2}
				-\mathbb{E}\bar{\zeta}_1^{u_2})+D_1^\top P_1(\sigma-\mathbb{E}\sigma)+\rho_1^1-\mathbb{E}\rho_1^1\\
				&\qquad\quad +(R^1_{21}+D_1^\top P_1D_2)(u_2-\mathbb{E}u_2)\big]+(I-\Sigma_1^\dagger\Sigma_1)v_1+(I-\hat{\Sigma}_1^\dagger\hat{\Sigma}_1)\hat{v}_1\\
				&\quad -\hat{\Sigma}_1^\dagger\big[\boldsymbol{B}_1^\top\mathbb{E}\bar{\eta}_1^{u_2}+\boldsymbol{D}_1^\top\mathbb{E}\bar{\zeta}_1^{u_2}
				+\boldsymbol{D}_1^\top P_1\mathbb{E}\sigma+\mathbb{E}\rho_1^1+(\boldsymbol{R}^1_{21}+\boldsymbol{D}_1^\top P_1\boldsymbol{D}_2)\mathbb{E}u_2\big],\, a.e.,\,\mathbb{P}\mbox{-}a.s.,
			\end{aligned}
		\end{equation}
		for some $\big(v_1(\cdot),\hat{v}_1(\cdot)\big)\in L^2_{\mathbb{F}}(t,T;\mathbb{R}^{m_1}) \times L^2(t,T;\mathbb{R}^{m_1}) $.
		
		\begin{Remark}
			From the above derivation process, it can be seen that the closed-loop optimal strategy of the follower is characterized by two coupled Riccati equations and a linear MF-BSDE, which is different from \cite{LSY2016} where the existence of the closed-loop optimal strategy is characterized by the solutions of two coupled Riccati equations, together with the adapted solution to a linear BSDE and a linear backward ordinary differential equation (BODE for short). It is worth noting that the follower's closed-loop optimal strategy should be unavoidably taken into account, when solving the optimization problem of the leader. So, these equations used to characterize follower's closed-loop optimal strategy can be regarded as a part of the state equation in leader's problem. The method in \cite{LSY2016} does not apply to our problem.
		\end{Remark}
		
		\begin{Remark}
			Riccati equations (\ref{Riccati---1}), (\ref{Riccati---2}) are the same as (4.5), (4.6) in Yong \cite{Yong2013}, respectively (See also Li et al. \cite{LSY2016}, Lin et al. \cite{LJZ2019}). The solvability of them are
			standard.
		\end{Remark}
		
		The following result characterizes the closed-loop solvability of Problem (MF-SLQ)$_f$.
		\begin{mythm}\label{Th-cl-f}
			Let (H1)-(H2) hold. For any $u_2(\cdot) \in \mathcal{U}_2[t,T]$, Problem (MF-SLQ)$_f$ admits a closed-loop optimal strategy on $[t,T]$ if and only if two Riccati equations (\ref{Riccati---1}) and (\ref{Riccati---2}) admit solutions $(P_1(\cdot),\Pi_1(\cdot)) \in C([t,T],\mathbb{S}^n) \times C([t,T],\mathbb{S}^n)$, which satisfy (\ref{amage assumption---follower}), (\ref{space assumption---follower}) and (\ref{assumption---follower})
				and MF-BSDE (\ref{follower BSDE}) admits an adapted solution $(\bar{\eta}_1^{u_2}(\cdot),\bar{\zeta}_1^{u_2}(\cdot)) \in L^2_{\mathbb{F}}(t,T;\mathbb{R}^n) \times L^2_{\mathbb{F}}(t,T;\mathbb{R}^n)$, which satisfies (\ref{amage assumption---follower---}) and (\ref{space assumption---follower---}). In this case, the closed-loop optimal strategy $(\bar{\Theta}_1(\cdot),\bar{\hat{\Theta}}_1(\cdot),\bar{v}_1(\cdot))$ of Problem (MF-SLQ)$_f$ admits the representation of (\ref{follower closed-loop strategy Theta1 hat{Theta}1}), (\ref{follower closed-loop optimal}).
				Further, the value function of the follower is given by
				\begin{equation}\label{value-1}
					\hspace{-3mm}\begin{aligned}
						&V_1(t,\xi;u_2(\cdot))=\mathbb{E}\biggl\{ \big\langle P_1(t)(\xi-\mathbb{E}\xi),\xi-\mathbb{E}\xi\big\rangle+\big\langle \Pi_1(t)\mathbb{E}\xi,\mathbb{E}\xi\big\rangle\\
						&\ +2\big\langle \bar{\eta}_1^{u_2}(t)-\mathbb{E}\bar{\eta}_1^{u_2}(t),\xi-\mathbb{E}\xi\big\rangle+2\big\langle \mathbb{E}\bar{\eta}_1^{u_2}(t),\mathbb{E}\xi\big\rangle\\
						&\ +\int_t^T\Big[\big\langle (R^1_{22}+D^\top_2P_1D_2)(u_2-\mathbb{E}u_2),u_2-\mathbb{E}u_2\big\rangle+\big\langle(\boldsymbol{R}^1_{22}+\boldsymbol{D}_2^\top P_1\boldsymbol{D}_2)\mathbb{E}u_2,\mathbb{E}u_2\big\rangle\\
						&\ +2\big\langle B^\top_2(\bar{\eta}_1^{u_2}-\mathbb{E}\bar{\eta}_1^{u_2})+D^\top_2(\bar{\zeta}_1^{u_2}-\mathbb{E}\bar{\zeta}_1^{u_2})
						+D^\top_2P_1(\sigma-\mathbb{E}\sigma)+\rho^1_2-\mathbb{E}\rho^1_2,u_2-\mathbb{E}u_2 \big\rangle\\
						&\ +2\big\langle \boldsymbol{B}_2^\top\mathbb{E}\bar{\eta}_1^{u_2}+\boldsymbol{D}_2^\top\mathbb{E}\bar{\zeta}_1^{u_2}+\boldsymbol{D}_2^\top P_1\mathbb{E}\sigma+\mathbb{E}\rho^1_2,\mathbb{E}u_2 \big\rangle
						+2\big\langle \bar{\eta}_1^{u_2}-\mathbb{E}\bar{\eta}_1^{u_2},b-\mathbb{E}b\big\rangle+2\big\langle \mathbb{E}\bar{\eta}_1^{u_2},\mathbb{E}b\big\rangle\\
						&\ +2\big\langle \bar{\zeta}_1^{u_2},\sigma \big\rangle
						+\big\langle P_1\sigma,\sigma\big\rangle-\big\langle \Sigma_1(\bar{v}_1-\mathbb{E}\bar{v}_1),\bar{v}_1-\mathbb{E}\bar{v}_1\big\rangle
						-\big\langle \hat{\Sigma}_1\mathbb{E}\bar{v}_1,\mathbb{E}\bar{v}_1\big\rangle\Big]ds \biggr\}.
					\end{aligned}
				\end{equation}
			\end{mythm}
			
			\begin{proof}
				Since the proof of necessity has already been given in the above derivation, now we only need to prove the sufficiency. For any given $u_2(\cdot) \in \mathcal{U}_2[t,T]$, we take $u_1(\cdot) \in \mathcal{U}_1[t,T]$, and let $x(\cdot)\equiv x(\cdot;t,\xi,u_1(\cdot),u_2(\cdot)),\,\bar{x}^{u_2}(\cdot)\equiv x(\cdot;t,\xi,\bar{\Theta}_1(\cdot),\bar{\hat{\Theta}}_1(\cdot),\bar{v}_1(\cdot),u_2(\cdot)) $ be the corresponding state processes. Applying It\^o's formula to $\big\langle P_1(x-\mathbb{E}x),x-\mathbb{E}x \big\rangle$, $\big\langle \bar{\eta}_1^{u_2}-\mathbb{E}\bar{\eta}_1^{u_2},x-\mathbb{E}x \big\rangle$,  $\big\langle \Pi_1\mathbb{E}x,\mathbb{E}x \big\rangle$ and $\big\langle \mathbb{E}\bar{\eta}_1^{u_2},\mathbb{E}x \big\rangle$ respectively, and plugging them into follower's cost functional, we have
				\begin{equation*}
					\begin{aligned}
						&J_1(t,\xi;u_1(\cdot),u_2(\cdot))=\mathbb{E}\biggl\{ \big\langle P_1(t)(\xi-\mathbb{E}\xi),\xi-\mathbb{E}\xi\big\rangle+\big\langle \Pi_1(t)\mathbb{E}\xi,\mathbb{E}\xi\big\rangle
						+2\big\langle \bar{\eta}_1^{u_2}(t)-\mathbb{E}\bar{\eta}_1^{u_2}(t),\xi-\mathbb{E}\xi \big\rangle\\
						&\quad +2\big\langle \mathbb{E}\bar{\eta}_1^{u_2}(t),\mathbb{E}\xi \big\rangle +\int_t^T\Big[\big\langle (R^1_{22}+D^\top_2P_1D_2)(u_2-\mathbb{E}u_2),u_2-\mathbb{E}u_2\big\rangle\\
						&\quad +\big\langle (\boldsymbol{R}^1_{22}+\boldsymbol{D}_2^\top P_1\boldsymbol{D}_2)\mathbb{E}u_2,\mathbb{E}u_2\big\rangle
						+2\big\langle B^\top_2(\bar{\eta}_1^{u_2}-\mathbb{E}\bar{\eta}_1^{u_2})+D^\top_2(\bar{\zeta}_1^{u_2}-\mathbb{E}\bar{\zeta}_1^{u_2})\\
						&\quad +D^\top_2P_1(\sigma-\mathbb{E}\sigma)+\rho^1_2-\mathbb{E}\rho^1_2,u_2-\mathbb{E}u_2 \big\rangle+2\big\langle \boldsymbol{B}_2^\top\mathbb{E}\bar{\eta}_1^{u_2}
						+\boldsymbol{D}_2^\top\mathbb{E}\bar{\zeta}_1^{u_2}+\boldsymbol{D}_2^\top P_1\mathbb{E}\sigma+\mathbb{E}\rho^1_2,\mathbb{E}u_2 \big\rangle\\
					\end{aligned}
				\end{equation*}
				\begin{equation*}
					\begin{aligned}	&\quad +2\big\langle \bar{\eta}_1^{u_2}-\mathbb{E}\bar{\eta}_1^{u_2},b-\mathbb{E}b\big\rangle
						+2\big\langle \mathbb{E}\bar{\eta}_1^{u_2},\mathbb{E}b\big\rangle+2\langle \bar{\zeta}_1^{u_2},\sigma \rangle+\big\langle P_1\sigma,\sigma\big\rangle\\
						&\quad -\big\langle \Sigma_1(\bar{v}_1-\mathbb{E}\bar{v}_1),\bar{v}_1-\mathbb{E}\bar{v}_1\big\rangle-\big\langle \hat{\Sigma}_1\mathbb{E}\bar{v}_1,\mathbb{E}\bar{v}_1\big\rangle\Big]ds \biggr\}\\
						&\quad +\mathbb{E}\int_t^T\Big[\big\langle \Sigma_1\big[u_1-\mathbb{E}u_1-\bar{\Theta}_1(x-\mathbb{E}x)-(\bar{v}_1-\mathbb{E}\bar{v}_1)\big],
						u_1-\mathbb{E}u_1-\bar{\Theta}_1(x-\mathbb{E}x)-(\bar{v}_1-\mathbb{E}\bar{v}_1)\big\rangle\\
						&\qquad\qquad\quad  +\big\langle \hat{\Sigma}_1(\mathbb{E}u_1-\bar{\hat{\Theta}}_1\mathbb{E}x-\mathbb{E}\bar{v}_1),\mathbb{E}u_1-\bar{\hat{\Theta}}_1\mathbb{E}x-\mathbb{E}\bar{v}_1\big\rangle\Big]ds\\
						&=J_1\big(t,\xi;\bar{\Theta}_1(\bar{x}^{u_2}-\mathbb{E}\bar{x}^{u_2})+\bar{\hat{\Theta}}_1\mathbb{E}\bar{x}^{u_2}+\bar{v}_1,u_2(\cdot)\big)\\
						&\quad +\mathbb{E}\int_t^T\Big[\big\langle \Sigma_1\big[u_1-\mathbb{E}u_1-\bar{\Theta}_1(x-\mathbb{E}x)-(\bar{v}_1-\mathbb{E}\bar{v}_1)\big],u_1-\mathbb{E}u_1
						-\bar{\Theta}_1\big(x-\mathbb{E}x\big)-\big(\bar{v}_1-\mathbb{E}\bar{v}_1\big) \big\rangle\\
						&\qquad \qquad +\big\langle \hat{\Sigma}_1\big(\mathbb{E}u_1-\bar{\hat{\Theta}}_1\mathbb{E}x-\mathbb{E}\bar{v}_1\big),\mathbb{E}u_1-\bar{\hat{\Theta}}_1\mathbb{E}x-\mathbb{E}\bar{v}_1\big\rangle\Big]ds.
					\end{aligned}
				\end{equation*}
				Hence, if $\Sigma_1 \geq 0,\ \hat{\Sigma}_1 \geq 0,\ a.e.$, we can see that
				\begin{equation*}
					J_1\big(t,\xi;\bar{\Theta}_1(\bar{x}^{u_2}-\mathbb{E}\bar{x}^{u_2})+\bar{\hat{\Theta}}_1\mathbb{E}\bar{x}^{u_2}+\bar{v}_1,u_2(\cdot)\big) \leq J_1(t,\xi;u_1(\cdot),u_2(\cdot)),\quad \forall u_1(\cdot) \in \mathcal{U}_1[t,T].
				\end{equation*}
				In this case, (\ref{value-1}) holds.
			\end{proof}
			
			For brevity, we assume that $\Sigma_1^{-1}$ and $\hat{\Sigma}_1^{-1}$ exist in the following. If $\Sigma_1$ and $\hat{\Sigma}_1$ are not invertible, the concept of generalized inverse could be introduced as before, and the expression becomes more complicated, but there is no other difficulty. Thus, for given $u_2(\cdot) \in \mathcal{U}_2[t,T]$, by (\ref{follower closed-loop strategy Theta1 hat{Theta}1}) and (\ref{follower closed-loop optimal}), the follower takes his/her optimal strategy:
			\begin{equation}\label{follower optimal strategy-invertible}
				\left\{\begin{aligned}
					\bar{\Theta}_1&=-\Sigma_1^{-1}(B^\top_1P_1+D^\top_1P_1C+S^1_1),\,\,\,\,\bar{\hat{\Theta}}_1=-\hat{\Sigma}_1^{-1}(\boldsymbol{B}_1^\top\Pi_1+\boldsymbol{D}_1^\top P_1\boldsymbol{C}+\boldsymbol{S}^1_1),\\
					\bar{v}_1&=-\hat{\Sigma}_1^{-1}\big[\boldsymbol{B}_1^\top\mathbb{E}\bar{\eta}_1^{u_2}+\boldsymbol{D}_1^\top\mathbb{E}\bar{\zeta}_1^{u_2}
					+\boldsymbol{D}_1^\top P_1\mathbb{E}\sigma+\mathbb{E}\rho_1^1+(\boldsymbol{R}^1_{21}+\boldsymbol{D}_1^\top P_1\boldsymbol{D}_2)\mathbb{E}u_2\big]\\
					&\quad-\Sigma_1^{-1}\big[B_1^\top(\bar{\eta}_1^{u_2}-\mathbb{E}\bar{\eta}_1^{u_2})+D^\top_1(\bar{\zeta}_1^{u_2}-\mathbb{E}\bar{\zeta}_1^{u_2})+D_1^\top P_1(\sigma-\mathbb{E}\sigma)+\rho_1^1-\mathbb{E}\rho_1^1\\
					&\qquad\qquad +(R^1_{21}+D_1^\top P_1D_2\big)\big(u_2-\mathbb{E}u_2)\big],\quad a.e.,\,\mathbb{P}\mbox{-}a.s..
				\end{aligned}\right.
			\end{equation}
			
			\section{Optimization problem of the leader}\label{leader's problem}
			
			The process triple $(\bar{x}^{u_2}(\cdot),\bar{\eta}_1^{u_2}(\cdot),\bar{\zeta}_1^{u_2}(\cdot))\in L^2_{\mathbb{F}}(t,T;\mathbb{R}^n)\times L^2_{\mathbb{F}}(t,T;\mathbb{R}^n)\times L^2_{\mathbb{F}}(t,T;\mathbb{R}^n)$ satisfies the following MF-FBSDE, which, now, is the ``state" equation of the leader:
			\begin{equation}\label{leader state}
				\left\{\begin{aligned}
					d\bar{x}^{u_2}(s)&=\big\{ \tilde{A}(\bar{x}^{u_2}-\mathbb{E}\bar{x}^{u_2})+\check{A}\mathbb{E}\bar{x}^{u_2}+\tilde{M}(\bar{\eta}_1^{u_2}-\mathbb{E}\bar{\eta}_1^{u_2})
					+\check{M}\mathbb{E}\bar{\eta}_1^{u_2} \\
					&\qquad +\tilde{F}(\bar{\zeta}_1^{u_2}-\mathbb{E}\bar{\zeta}_1^{u_2})+\check{F}\mathbb{E}\bar{\zeta}_1^{u_2}+\tilde{B}(u_2-\mathbb{E}u_2)+\check{B}\mathbb{E}u_2+\tilde{b}\big\}ds\\
					&\quad +\big\{ \tilde{C}(\bar{x}^{u_2}-\mathbb{E}\bar{x}^{u_2})+\check{C}\mathbb{E}\bar{x}^{u_2}+\tilde{F}^\top(\bar{\eta}_1^{u_2}-\mathbb{E}\bar{\eta}_1^{u_2})
					+\check{F}^\top\mathbb{E}\bar{\eta}_1^{u_2} \\
					&\qquad+\tilde{K}(\bar{\zeta}_1^{u_2}-\mathbb{E}\bar{\zeta}_1^{u_2})+\check{K}\mathbb{E}\bar{\zeta}_1^{u_2}+\tilde{D}(u_2-\mathbb{E}u_2)
					+\check{D}\mathbb{E}u_2+\tilde{\sigma}\big\}dW(s),\\
					-d\bar{\eta}_1^{u_2}(s)&=\big\{\tilde{A}^\top(\bar{\eta}_1^{u_2}-\mathbb{E}\bar{\eta}_1^{u_2})+\check{A}^\top\mathbb{E}\bar{\eta}_1^{u_2}+\tilde{C}^\top(\bar{\zeta}_1^{u_2}
					-\mathbb{E}\bar{\zeta}_1^{u_2})+\check{C}^\top\mathbb{E}\bar{\zeta}_1^{u_2}\\
					&\qquad +\tilde{N}(u_2-\mathbb{E}u_2)+\check{N}\mathbb{E}u_2+\tilde{f}\big\}ds-\bar{\zeta}_1^{u_2}dW(s),\quad s\in[t,T],\\
					\bar{x}^{u_2}(t)&=\xi,\,\,\,\,\,\bar{\eta}_1^{u_2}(T)=\boldsymbol{g}^1,
				\end{aligned}\right.
			\end{equation}
			and leader's cost functional is
			\begin{equation}
				\begin{aligned}
					&\hat{J}_2(t,\xi;u_2(\cdot)):=J_2(t,\xi;\bar{u}_1(\cdot),u_2(\cdot))=\mathbb{E}\Bigg\{\big\langle G^2\big(\bar{x}^{u_2}(T)-\mathbb{E}\bar{x}^{u_2}(T)\big),\bar{x}^{u_2}(T)-\mathbb{E}\bar{x}^{u_2}(T) \big\rangle\\
					&\quad +\big\langle \boldsymbol{G}^2\mathbb{E}\bar{x}^{u_2}(T),\mathbb{E}\bar{x}^{u_2}(T)\big\rangle
					+2\big\langle g^2,\bar{x}^{u_2}(T)-\mathbb{E}\bar{x}^{u_2}(T) \big\rangle+2\big\langle \boldsymbol{g}^2,\mathbb{E}\bar{x}^{u_2}(T)\big\rangle\\
					&\quad +\int_t^T \bigg[\bigg\langle
					\left( \begin{array}{cccc}
						\tilde{Q}_{11}  & \tilde{Q}_{12}^\top  & \tilde{Q}_{13}^\top  & \tilde{S}^\top_1 \\
						\tilde{Q}_{12}  & \tilde{Q}_{22}       & \tilde{Q}_{23}^\top  & \tilde{S}^\top_2 \\
						\tilde{Q}_{13}  & \tilde{Q}_{23}       & \tilde{Q}_{33}       & \tilde{S}^\top_3 \\
						\tilde{S}_1     & \tilde{S}_2          & \tilde{S}_3          & \tilde{R}       \\
					\end{array} \right)
					\left( \begin{array}{c} \bar{x}^{u_2}-\mathbb{E}\bar{x}^{u_2} \\ \bar{\eta}^{u_2}_1-\mathbb{E}\bar{\eta}^{u_2}_1 \\ \bar{\zeta}^{u_2}_1-\mathbb{E}\bar{\zeta}^{u_2}_1 \\ u_2-\mathbb{E}u_2 \end{array} \right),
					\left( \begin{array}{c} \bar{x}^{u_2}-\mathbb{E}\bar{x}^{u_2} \\ \bar{\eta}^{u_2}_1-\mathbb{E}\bar{\eta}^{u_2}_1 \\ \bar{\zeta}^{u_2}_1-\mathbb{E}\bar{\zeta}^{u_2}_1 \\ u_2-\mathbb{E}u_2 \end{array}\right)\bigg\rangle\\
					&\qquad\qquad +2\bigg\langle
					\left( \begin{array}{c} \tilde{q}_1 \\ \tilde{q}_2 \\ \tilde{q}_3 \\ \tilde{\rho} \end{array} \right),
					\left( \begin{array}{c} \bar{x}^{u_2}-\mathbb{E}\bar{x}^{u_2} \\ \bar{\eta}^{u_2}_1-\mathbb{E}\bar{\eta}^{u_2}_1 \\ \bar{\zeta}^{u_2}_1-\mathbb{E}\bar{\zeta}^{u_2}_1 \\u_2-\mathbb{E}u_2 \end{array}\right)\bigg\rangle
					+2\bigg\langle
					\left( \begin{array}{c} \check{q}_1 \\ \check{q}_2 \\ \check{q}_3 \\ \check{\rho} \end{array} \right),
					\left( \begin{array}{c} \mathbb{E}\bar{x}^{u_2} \\ \mathbb{E}\bar{\eta}^{u_2}_1 \\ \mathbb{E}\bar{\zeta}^{u_2}_1 \\\mathbb{E}u_2 \end{array}\right)\bigg\rangle\\
					&\qquad\qquad +\bigg\langle
					\left( \begin{array}{cccc}
						\check{Q}_{11}  & \check{Q}_{12}^\top  & \check{Q}_{13}^\top  & \check{S}^\top_1 \\
						\check{Q}_{12}  & \check{Q}_{22}       & \check{Q}_{23}^\top  & \check{S}^\top_2 \\
						\check{Q}_{13}  & \check{Q}_{23}       & \check{Q}_{33}       & \check{S}^\top_3 \\
						\check{S}_1     & \check{S}_2          & \check{S}_3          & \check{R}        \\
					\end{array} \right)
					\left( \begin{array}{c} \mathbb{E}\bar{x}^{u_2} \\ \mathbb{E}\bar{\eta}^{u_2}_1 \\ \mathbb{E}\bar{\zeta}^{u_2}_1 \\\mathbb{E}u_2          \end{array} \right),
					\left( \begin{array}{c} \mathbb{E}\bar{x}^{u_2} \\ \mathbb{E}\bar{\eta}^{u_2}_1 \\ \mathbb{E}\bar{\zeta}^{u_2}_1 \\ \mathbb{E}u_2 \end{array}\right)\bigg\rangle \bigg]ds+\mathcal L\Bigg\},
				\end{aligned}
			\end{equation}
			where we denote
			\begin{equation*}
				\begin{aligned}
					&\acute{R}:=R^2_{11}+\hat{R}^2_{11},\,\,\, \acute{S}:=S^2_1+\hat{S}^2_1,\,\,\, \acute{Q}:=Q^2+\hat{Q}^2,\,\,\,
					\breve{R}:=R^2_{22}+\hat{R}^2_{22},\,\,\,\acute{L}:=R^1_{12}+D^\top_2P_1D_1,\,\,\,\\
					&\grave{R}:=R^2_{12}+\hat{R}^2_{12},\,\,\, \grave{S}:=S^2_2+\hat{S}^2_2,\,\,\,
					\acute{\rho}:=D^\top_1P_1(\sigma-\mathbb{E}\sigma)+\rho_1^1-\mathbb{E}\rho_1^1,\,\,\,
					\grave{\rho}:=\boldsymbol{D}_1^\top P_1\mathbb{E}\sigma+\mathbb{E}\rho_1^1,\\
					&\acute{J}:=\boldsymbol{R}^1_{12}+\boldsymbol{D}_2^\top P_1\boldsymbol{D}_1,\,\,\,\grave{L}:=R^1_{21}+D^\top_1P_1D_2,
					\,\,\,\grave{J}:= \boldsymbol{R}^1_{21}+\boldsymbol{D}_1^\top P_1\boldsymbol{D}_2,\\
					&\tilde{A}:=A+B_1\bar{\Theta}_1,\,\,\,\check{A}:=\boldsymbol{A}+\boldsymbol{B}_1\bar{\hat{\Theta}}_1,\,\,\,
					\tilde{C}:=C+D_1\bar{\Theta}_1,\,\,\,\check{C}:=\boldsymbol{C}+\boldsymbol{D}_1\bar{\hat{\Theta}}_1,\\
					&\tilde{M}:=-B_1\Sigma_1^{-1}B_1^\top,\,\,\,\check{M}:=-\boldsymbol{B}_1\hat{\Sigma}^{-1}_1\boldsymbol{B}_1^\top,\,\,\,
					\tilde{F}:=-B_1\Sigma_1^{-1}D_1^\top,\,\,\,\check{F}:=-\boldsymbol{B}_1\hat{\Sigma}^{-1}_1\boldsymbol{D}_1^\top,\\
					&\tilde{K}:=-D_1\Sigma_1^{-1}D^\top_1,\,\,\, \check{K}:=-\boldsymbol{D}_1\hat{\Sigma}^{-1}_1\boldsymbol{D}_1^\top,\,\,\,
					\tilde{B}:=B_2-B_1\Sigma_1^{-1}\grave{L},\,\,\,
					\check{B}:=\boldsymbol{B}_2-\boldsymbol{B}_1\hat{\Sigma}^{-1}_1\grave{J},\\
					&\tilde{D}:=D_2-D_1\Sigma_1^{-1}\grave{L},\,\,\,
					\check{D}:=\boldsymbol{D}_2-\boldsymbol{D}_1\hat{\Sigma}^{-1}_1\grave{J},\,\,\,
					\check{N}:=\Pi_1\boldsymbol{B}_2+\boldsymbol{C}^\top P_1\boldsymbol{D}_2+{\boldsymbol{S}^1_2}^\top+\bar{\hat{\Theta}}^\top_1\grave{J},\\
					&\tilde{N}:=P_1B_2+C^\top P_1D_2+S^{1\top}_2+\bar{\Theta}^\top_1\grave{L},\\
					&\tilde{b}:=b-B_1\Sigma_1^{-1}\acute{\rho}-\boldsymbol{B}_1\hat{\Sigma}^{-1}_1\grave{\rho},\,\,\,
					\tilde{\sigma}:=\sigma-D_1\Sigma_1^{-1}\acute{\rho}-\boldsymbol{D}_1\hat{\Sigma}^{-1}_1\grave{\rho},\\
					&\tilde{f}:=P_1\big(b-\mathbb{E}b\big)+\Pi_1\mathbb{E}b+\tilde{C}^\top P_1\big(\sigma-\mathbb{E}\sigma\big)
					+\check{C}^\top P_1\mathbb{E}\sigma+\bar{\Theta}^\top_1\big(\rho_1^1-\mathbb{E}\rho_1^1\big)+\bar{\hat{\Theta}}^\top_1\mathbb{E}\rho_1^1+q^1,\\
					&\tilde{Q}_{11}:=Q^2+S^{2\top}_1\bar{\Theta}_1+\bar{\Theta}^\top_1S^2_1+\bar{\Theta}^\top_1R^2_{11}\bar{\Theta}_1,\,\,\,
					\tilde{Q}_{12}:=-B_1\Sigma_1^{-1}\big(S^2_1+R^2_{11}\bar{\Theta}_1\big),\\
					&\check{Q}_{22}:=\boldsymbol{B}_1\hat{\Sigma}^{-1}_1\acute{R}\hat{\Sigma}^{-1}_1\boldsymbol{B}_1^\top,\,\,\,
					\tilde{Q}_{13}:=-D_1\Sigma_1^{-1}\big(S^2_1+R^2_{11}\bar{\Theta}_1\big),\,\,\,
					\tilde{Q}_{22}:=B_1\Sigma_1^{-1}R^2_{11}\Sigma_1^{-1}B^\top_1, \\
					&\tilde{Q}_{23}:=D_1\Sigma_1^{-1}R^2_{11}\Sigma_1^{-1}B^\top_1,\,\,\,
					\tilde{Q}_{33}:=D_1\Sigma_1^{-1}R^2_{11}\Sigma_1^{-1}D^\top_1,\,\,\,
					\check{Q}_{11}:=\acute{Q}+\acute{S}^\top\bar{\hat{\Theta}}_1+\bar{\hat{\Theta}}^\top_1\acute{S}+\bar{\hat{\Theta}}^\top_1\acute{R}\bar{\hat{\Theta}}_1,\\
					&\check{Q}_{23}:=\boldsymbol{D}_1\hat{\Sigma}^{-1}_1\acute{R}\hat{\Sigma}^{-1}_1\boldsymbol{B}_1^\top,\,\,\,
					\check{Q}_{12}:=-\boldsymbol{B}_1\hat{\Sigma}^{-1}_1\big(\acute{S}+\acute{R}\bar{\hat{\Theta}}_1\big),\,\,\,
					\check{Q}_{13}:=-\boldsymbol{D}_1\hat{\Sigma}^{-1}_1\big(\acute{S}+\acute{R}\bar{\hat{\Theta}}_1\big),\\
					&\check{Q}_{33}:=\boldsymbol{D}_1\hat{\Sigma}^{-1}_1\acute{R}\hat{\Sigma}^{-1}_1\boldsymbol{D}_1^\top,\,\,\,
					\tilde{S}_1:=S^2_2+R^2_{12}\bar{\Theta}_1-\acute{L}\Sigma_1^{-1}\big(S^2_1+R^2_{11}\bar{\Theta}_1\big),\\
					&\tilde{S}_2:=\big(\acute{L}\Sigma_1^{-1}R^2_{11}-R^2_{12}\big)\Sigma_1^{-1}B^\top_1,\,\,\,
					\tilde{S}_3:=\big(\acute{L}\Sigma_1^{-1}R^2_{11}-R^2_{12}\big)\Sigma_1^{-1}D^\top_1,\\
				\end{aligned}
			\end{equation*}
			\begin{equation}\label{coefficients of leader}
				\begin{aligned}
					&\tilde{R}:=R^2_{22}-\acute{L}\Sigma_1^{-1}R^2_{21}-R^2_{12}\Sigma_1^{-1}\acute{L}^\top+\acute{L}\Sigma_1^{-1}R^2_{11}\Sigma_1^{-1}\acute{L}^\top,\\
					&\tilde{q}_1:=q^2+\bar{\Theta}^\top_1\big(\rho^2_1-\mathbb{E}\rho^2_1\big)-\big(S^2_1+R^2_{11}\bar{\Theta}_1\big)^\top\Sigma_1^{-1}\acute{\rho},\\
					&\tilde{q}_2:=B_1\Sigma_1^{-1}\bigl[R^2_{11}\Sigma_1^{-1}\acute{\rho}-\big(\rho_1^2-\mathbb{E}\rho_1^2\big)\bigr],\,\,\,
					\tilde{q}_3:=D_1\Sigma_1^{-1}\bigl[R^2_{11}\Sigma_1^{-1}\acute{\rho}-\big(\rho_1^2-\mathbb{E}\rho_1^2\big)\bigr],\\
					&\tilde{\rho}:=\big(\acute{L}\Sigma_1^{-1}R^2_{11}-R^2_{12}\big)\Sigma_1^{-1}\acute{\rho}+\rho^2_2-\mathbb{E}\rho^2_2-\acute{L}\Sigma_1^{-1}\big(\rho_1^2-\mathbb{E}\rho^2_1\big),\\
					&\check{S}_1:=\grave{S}+\grave{R}\bar{\hat{\Theta}}_1-\acute{J}\hat{\Sigma}_1^{-1}\big(\acute{S}+\acute{R}\bar{\hat{\Theta}}_1\big),\,\,\,
					\check{S}_2:=\big(\acute{J}\hat{\Sigma}^{-1}_1\acute{R}-\grave{R}\big)\hat{\Sigma}^{-1}_1\boldsymbol{B}_1^\top,\\
					&\check{S}_3:=\big(\acute{J}\hat{\Sigma}^{-1}_1\acute{R}-\grave{R}\big)\hat{\Sigma}^{-1}_1\boldsymbol{D}_1^\top,\,\,\,
					\check{R}:=\breve{R}-\grave{R}\hat{\Sigma}^{-1}_1\acute{J}^\top-\acute{J}\hat{\Sigma}^{-1}_1\grave{R}^\top
					+\acute{J}\hat{\Sigma}^{-1}_1\acute{R}\hat{\Sigma}^{-1}_1\acute{J}^\top,\\
					&\check{q}_1:=\mathbb{E}q^2+\bar{\hat{\Theta}}^\top_1\mathbb{E}\rho_1^2-\big(\acute{S}+\acute{R}\bar{\hat{\Theta}}_1\big)^\top\hat{\Sigma}_1^{-1}\grave{\rho},\,\,\,
					\check{q}_2:=\boldsymbol{B}_1\hat{\Sigma}^{-1}_1\big[\acute{R}\hat{\Sigma}_1^{-1}\grave{\rho}-\mathbb{E}\rho_1^2\big],\\
					&\check{q}_3:=\boldsymbol{D}_1\hat{\Sigma}^{-1}_1\big[\acute{R}\hat{\Sigma}_1^{-1}\grave{\rho}-\mathbb{E}\rho_1^2\big],\,\,\,
					\check{\rho}:=\big(\acute{J}\hat{\Sigma}^{-1}_1\acute{R}-\grave{R}\big)\hat{\Sigma}^{-1}_1\grave{\rho}+\mathbb{E}\rho^2_2-\acute{J}\hat{\Sigma}^{-1}_1\mathbb{E}\rho_1^2,\\
					&\mathcal L:=\int_t^T\Big[\big\langle R^2_{11}\Sigma_1^{-1}\acute{\rho},\Sigma_1^{-1}\acute{\rho} \big\rangle-2\big\langle \rho_1^2-\mathbb{E}\rho_1^2,\Sigma_1^{-1}\acute{\rho}\big\rangle
					-2\big\langle \mathbb{E}\rho_1^2,\hat{\Sigma}^{-1}_1\grave{\rho}\big\rangle+\big\langle \acute{R}\hat{\Sigma}^{-1}_1\grave{\rho},\hat{\Sigma}^{-1}_1\grave{\rho} \big\rangle\Big]ds.
				\end{aligned}
			\end{equation}
			
			The mean-field type SLQ problem of the leader can be stated as follows.
			
			\textbf{Problem (MF-SLQ)$_l$}. For any given initial pair $(t,\xi) \in [0,T] \times L^2_{\mathcal{F}_t}(\Omega;\mathbb{R}^n)$, find a $\bar{u}_2(\cdot) \in \mathcal{U}_2[t,T]$ such that
			\begin{equation}\label{LLQ}
				\hat{J}_2(t,\xi;\bar{u}_2(\cdot))=\underset{u_2(\cdot)\in \mathcal{U}_2[t,T]} {\min}\hat{J}_2(t,\xi;u_2(\cdot))\equiv V_2(t,\xi).
			\end{equation}
			Problem (MF-SLQ)$_l$ is a SLQ optimal control problem of MF-FBSDE systems. If there exists a (unique) $\bar{u}_2(\cdot) \in \mathcal{U}_2[t,T]$ satisfying (\ref{LLQ}), $\bar{u}_2(\cdot)$ is called an (unique) open-loop optimal control of Problem (MF-SLQ)$_l$ for $(t,\xi)$, the corresponding $(\bar{x}(\cdot),\bar{\eta}_1(\cdot),\bar{\zeta}_1(\cdot))\equiv (\bar{x}^{\bar{u}_2}(\cdot),\bar{\eta}^{\bar{u}_2}_1(\cdot),\bar{\zeta}^{\bar{u}_2}_1(\cdot))$ is called an open-loop optimal state process triple. The map $V_2(\cdot,\cdot)$ is called the value function of Problem (MF-SLQ)$_l$. In this time, Problem (MF-SLQ)$_l$ is called (uniquely) open-loop solvable.
			
			In particular, we denote leader's problem as \textbf{Problem (MF-SLQ)$_{l0}$} and the cost functional as $\hat{J}_2^0(t,\xi;u_2(\cdot))$, when $\tilde{b}=\tilde{\sigma}=\tilde{f}=g^2=\hat{g}^2=\tilde{q}_j=\check{q}_j=\tilde{\rho}=\check{\rho}=0$, for $j=1,2,3$.
			
			The following theorem gives sufficient and necessary conditions to the open-loop solvability of Problem (MF-SLQ)$_l$.
			\begin{mythm}\label{leader open-loop ns condition}
				Let (H1)-(H2) hold. For any given initial pair $(t,\xi) \in [0,T] \times L^2_{\mathcal{F}_t}(\Omega;\mathbb{R}^n)$, a quadruple $(\bar{x}(\cdot),\bar{\eta}_1(\cdot),\bar{\zeta}_1(\cdot),\bar{u}_2(\cdot))$ is an open-loop optimal quadruple of Problem (MF-SLQ)$_l$ if and only if the following stationarity condition holds:
				\begin{equation}\label{leader open stationarity}
					\begin{aligned}
						0&=\tilde{B}^\top(q_2^{\bar{u}_2}-\mathbb{E}q_2^{\bar{u}_2})+\check{B}^\top\mathbb{E}q^{\bar{u}_2}_2+\tilde{D}^\top(k^{\bar{u}_2}_2-\mathbb{E}k^{\bar{u}_2}_2)
						+\check{D}^\top\mathbb{E}k^{\bar{u}_2}_2+\tilde{N}^\top(p^{\bar{u}_2}_2-\mathbb{E}p^{\bar{u}_2}_2)\\	
						&\quad +\check{N}^\top\mathbb{E}p^{\bar{u}_2}_2+\tilde{S}_1(\bar{x}-\mathbb{E}\bar{x})+\check{S}_1\mathbb{E}\bar{x}+\tilde{S}_2(\bar{\eta}_1-\mathbb{E}\bar{\eta}_1)
						+\check{S}_2\mathbb{E}\bar{\eta}_1+\tilde{S}_3(\bar{\zeta}_1-\mathbb{E}\bar{\zeta}_1)\\
						&\quad +\check{S}_3\mathbb{E}\bar{\zeta}_1+\tilde{R}(\bar{u}_2-\mathbb{E}\bar{u}_2)+\check{R}\mathbb{E}\bar{u}_2+\tilde{\rho}-\mathbb{E}\tilde{\rho}+\check{\rho},\quad a.e.,\, \mathbb{P}\mbox{-}a.s.,
					\end{aligned}
				\end{equation}
				where $(p^{\bar{u}_2}_2(\cdot),q^{\bar{u}_2}_2(\cdot),k_2^{\bar{u}_2}(\cdot))\in L^2_{\mathbb{F}}(t,T;\mathbb{R}^n)\times L^2_{\mathbb{F}}(t,T;\mathbb{R}^n)\times L^2_{\mathbb{F}}(t,T;\mathbb{R}^n)$ is the adapted solution to the following MF-FBSDE:
				\begin{equation}\label{leader adjoint equation}
					\left\{\begin{aligned}
						dp^{\bar{u}_2}_2(s)&=\big\{ \tilde{A}(p^{\bar{u}_2}_2-\mathbb{E}p^{\bar{u}_2}_2)+\check{A}\mathbb{E}p^{\bar{u}_2}_2+\tilde{M}^\top(q^{\bar{u}_2}_2-\mathbb{E}q^{\bar{u}_2}_2)
						+\check{M}^\top\mathbb{E}q^{\bar{u}_2}_2+\tilde{F}(k^{\bar{u}_2}_2-\mathbb{E}k^{\bar{u}_2}_2)\\
						&\quad\ +\check{F}\mathbb{E}k^{\bar{u}_2}_2+\tilde{Q}_{12}(\bar{x}-\mathbb{E}\bar{x})+\check{Q}_{12}\mathbb{E}\bar{x}+\tilde{Q}_{22}(\bar{\eta}_1-\mathbb{E}\bar{\eta}_1)
						+\check{Q}_{22}\mathbb{E}\bar{\eta}_1\\
						&\quad\ +\tilde{Q}_{23}^\top(\bar{\zeta}_1-\mathbb{E}\bar{\zeta}_1)+\check{Q}^\top_{23}\mathbb{E}\bar{\zeta}_1+\tilde{S}^\top_2(\bar{u}_2-\mathbb{E}\bar{u}_2)
						+\check{S}^\top_2\mathbb{E}\bar{u}_2+\tilde{q}_2-\mathbb{E}\tilde{q}_2+\check{q}_2\big\}ds\\
						&\,\, +\big\{\tilde{C}(p^{\bar{u}_2}_2-\mathbb{E}p^{\bar{u}_2}_2)+\check{C}\mathbb{E}p^{\bar{u}_2}_2+\tilde{F}^\top(q^{\bar{u}_2}_2-\mathbb{E}q^{\bar{u}_2}_2)
						+\check{F}^\top\mathbb{E}q^{\bar{u}_2}_2+\tilde{K}^\top(k^{\bar{u}_2}_2-\mathbb{E}k^{\bar{u}_2}_2)\\	
						&\quad\ +\check{K}^\top\mathbb{E}k^{\bar{u}_2}_2+\tilde{Q}_{13}(\bar{x}-\mathbb{E}\bar{x})+\check{Q}_{13}\mathbb{E}\bar{x}+\tilde{Q}_{23}(\bar{\eta}_1-\mathbb{E}\bar{\eta}_1)
						+\check{Q}_{23}\mathbb{E}\bar{\eta}_1\\
						&\quad\ +\tilde{Q}_{33}(\bar{\zeta}_1-\mathbb{E}\bar{\zeta}_1)+\check{Q}_{33}\mathbb{E}\bar{\zeta}_1+\tilde{S}^\top_3(\bar{u}_2-\mathbb{E}\bar{u}_2)
						+\check{S}^\top_3\mathbb{E}\bar{u}_2+\tilde{q}_3-\mathbb{E}\tilde{q}_3+\check{q}_3\big\}dW(s),\\
						-dq^{\bar{u}_2}_2(s)&=\big\{ \tilde{A}^\top(q^{\bar{u}_2}_2-\mathbb{E}q^{\bar{u}_2}_2)+\check{A}^\top\mathbb{E}q^{\bar{u}_2}_2
						+\tilde{C}^\top(k^{\bar{u}_2}_2-\mathbb{E}k^{\bar{u}_2}_2)
						+\check{C}^\top\mathbb{E}k^{\bar{u}_2}_2+\tilde{Q}_{11}(\bar{x}-\mathbb{E}\bar{x})\\
						&\quad\ +\check{Q}_{11}\mathbb{E}\bar{x}+\tilde{Q}_{12}^\top(\bar{\eta}_1-\mathbb{E}\bar{\eta}_1)+\check{Q}_{12}^\top\mathbb{E}\bar{\eta}_1
						+\tilde{Q}_{13}^\top(\bar{\zeta}_1-\mathbb{E}\bar{\zeta}_1)\\
						&\quad\ +\check{Q}^\top_{13}\mathbb{E}\bar{\zeta}_1+\tilde{S}^\top_1(\bar{u}_2-\mathbb{E}\bar{u}_2)+\check{S}^\top_1\mathbb{E}\bar{u}_2+\tilde{q}_1-\mathbb{E}\tilde{q}_1
						+\check{q}_1\Big\}ds-k^{\bar{u}_2}_2dW(s),\\
						p^{\bar{u}_2}_2(t)&=0,\,\,\,q^{\bar{u}_2}_2(T)=G^2\big(\bar{x}(T)-\mathbb{E}\bar{x}(T)\big)+\boldsymbol{G}^2\mathbb{E}\bar{x}(T)+\boldsymbol{g}^2,
					\end{aligned}\right.
				\end{equation}
				and the following convexity condition holds:
				\begin{equation}\label{leader convexity condition}
					\begin{aligned}
						0&\leq \mathbb{E}\Bigg\{ \big\langle G^2\big(x_{0l}(T)-\mathbb{E}x_{0l}(T)\big),x_{0l}(T)-\mathbb{E}x_{0l}(T) \big\rangle +\big\langle \boldsymbol{G}^2\mathbb{E}x_{0l}(T),\mathbb{E}x_{0l}(T)\big\rangle\\
						&\qquad+\int_t^T \bigg[\bigg\langle
						\left( \begin{array}{cccc}
							\tilde{Q}_{11}  & \tilde{Q}_{12}^\top  & \tilde{Q}_{13}^\top  & \tilde{S}^\top_1 \\
							\tilde{Q}_{12}  & \tilde{Q}_{22}       & \tilde{Q}_{23}^\top  & \tilde{S}^\top_2 \\
							\tilde{Q}_{13}  & \tilde{Q}_{23}       & \tilde{Q}_{33}       & \tilde{S}^\top_3 \\
							\tilde{S}_1     & \tilde{S}_2          & \tilde{S}_3          & \tilde{R}       \\
						\end{array} \right)
						\left( \begin{array}{c} x_{0l}-\mathbb{E}x_{0l} \\ \eta_{0l}-\mathbb{E}\eta_{0l} \\ \zeta_{0l}-\mathbb{E}\zeta_{0l} \\ u_2-\mathbb{E}u_2          \end{array} \right),
						\left( \begin{array}{c} x_{0l}-\mathbb{E}x_{0l} \\ \eta_{0l}-\mathbb{E}\eta_{0l} \\ \zeta_{0l}-\mathbb{E}\zeta_{0l} \\ u_2-\mathbb{E}u_2 \end{array}\right)\bigg\rangle\\
						&\qquad\quad +\bigg\langle
						\left( \begin{array}{cccc}
							\check{Q}_{11}  & \check{Q}_{12}^\top  & \check{Q}_{13}^\top  & \check{S}^\top_1 \\
							\check{Q}_{12}  & \check{Q}_{22}       & \check{Q}_{23}^\top  & \check{S}^\top_2 \\
							\check{Q}_{13}  & \check{Q}_{23}       & \check{Q}_{33}       & \check{S}^\top_3 \\
							\check{S}_1     & \check{S}_2          & \check{S}_3          & \check{R}        \\
						\end{array} \right)
						\left( \begin{array}{c} \mathbb{E}x_{0l}\\ \mathbb{E}\eta_{0l} \\ \mathbb{E}\zeta_{0l} \\\mathbb{E}u_2          \end{array} \right),
						\left( \begin{array}{c} \mathbb{E}x_{0l} \\ \mathbb{E}\eta_{0l} \\ \mathbb{E}\zeta_{0l} \\ \mathbb{E}u_2 \end{array}\right)\bigg\rangle\bigg]ds\Bigg\} ,\quad \forall u_2 \in \mathcal{U}_2[t,T],
					\end{aligned}
				\end{equation}
				where $(x_{0l}(\cdot),\eta_{0l}(\cdot),\zeta_{0l}(\cdot))\in L^2_{\mathbb{F}}(t,T;\mathbb{R}^n)\times L^2_{\mathbb{F}}(t,T;\mathbb{R}^n)\times L^2_{\mathbb{F}}(t,T;\mathbb{R}^n)$ is the solution to the following equation:
				\begin{equation}\label{leader x_0l}
					\left\{\begin{aligned}
						dx_{0l}(s)&=\big\{ \tilde{A}(x_{0l}-\mathbb{E}x_{0l})+\check{A}\mathbb{E}x_{0l}+\tilde{M}(\eta_{0l}-\mathbb{E}\eta_{0l})+\check{M}\mathbb{E}\eta_{0l} \\
						&\qquad +\tilde{F}(\zeta_{0l}-\mathbb{E}\zeta_{0l})+\check{F}\mathbb{E}\zeta_{0l}+\tilde{B}(u_2-\mathbb{E}u_2)+\check{B}\mathbb{E}u_2\big\}ds\\
						&\quad +\big\{ \tilde{C}(x_{0l}-\mathbb{E}x_{0l})+\check{C}\mathbb{E}x_{0l}+\tilde{F}^\top(\eta_{0l}-\mathbb{E}\eta_{0l})+\check{F}^\top\mathbb{E}\eta_{0l} \\
						&\qquad+\tilde{K}(\zeta_{0l}-\mathbb{E}\zeta_{0l})+\check{K}\mathbb{E}\zeta_{0l}+\tilde{D}(u_2-\mathbb{E}u_2)+\check{D}\mathbb{E}u_2\big\}dW(s),\\
						-d\eta_{0l}(s)&=\big\{ \tilde{A}^\top(\eta_{0l}-\mathbb{E}\eta_{0l})+\check{A}^\top\mathbb{E}\eta_{0l}+\tilde{C}^\top(\zeta_{0l}-\mathbb{E}\zeta_{0l})+\check{C}^\top\mathbb{E}\zeta_{0l} \\
						&\qquad +\tilde{N}(u_2-\mathbb{E}u_2)+\check{N}\mathbb{E}u_2\big\}ds-\zeta_{0l}dW(s),\\
						x_{0l}(t)&=0,\,\,\,\,\,\eta_{0l}(T)=0.
					\end{aligned}\right.
				\end{equation}
			\end{mythm}
			
			\begin{proof}
				Suppose $(\bar{x}(\cdot),\bar{\eta}_1(\cdot),\bar{\zeta}_1(\cdot),\bar{u}_2(\cdot))$ is a state-control quadruple corresponding to the given initial pair $(t,\xi) \in [0,T] \times L^2_{\mathcal{F}_t}(\Omega;\mathbb{R}^n)$. For any $u_2(\cdot) \in \mathcal{U}_2[t,T]$ and $\epsilon \in \mathbb{R}$, let $u_2^\epsilon(\cdot)=\bar{u}_2(\cdot)+\epsilon u_2(\cdot)$ and triple $(\bar{x}^{\epsilon}(\cdot)\equiv x(\cdot;\bar{\Theta}_1,\bar{\hat{\Theta}}_1,\bar{v}_1,\bar{u}_2+\epsilon u_2(\cdot)),\bar{\eta}^\epsilon_1(\cdot),\bar{\zeta}^\epsilon_1(\cdot))$ be the corresponding state. Thus,
				\begin{equation*}
					x_{0l}(\cdot) \equiv \frac{\bar{x}^{\epsilon}(\cdot)-\bar{x}(\cdot)}{\epsilon},\quad
					\eta_{0l}(\cdot) \equiv \frac{\bar{\eta}^\epsilon_1(\cdot)-\bar{\eta}_1(\cdot)}{\epsilon},\quad
					\zeta_{0l}(\cdot) \equiv \frac{\bar{\zeta}^\epsilon_1(\cdot)-\bar{\zeta}_1(\cdot)}{\epsilon}
				\end{equation*}
				are independent of $\epsilon$ and satisfy (\ref{leader x_0l}). Applying It\^o's formula to $\big\langle q^{\bar{u}_2}_2(\cdot),x_{0l}(\cdot) \big\rangle-\big\langle p^{\bar{u}_2}_2(\cdot),\eta_{0l}(\cdot) \big\rangle$, we can get
				\begin{equation*}
					\begin{aligned}
						&\hat{J}_2(t,\xi;\bar{u}_2(\cdot)+\epsilon u_2(\cdot))-\hat{J}_2(t,\xi;\bar{u}_2(\cdot))\\
						&=2\epsilon\mathbb{E}\biggl\{  \int_t^T \big\langle \tilde{B}^\top(q_2^{\bar{u}_2}-\mathbb{E}q_2^{\bar{u}_2})+\check{B}^\top\mathbb{E}q^{\bar{u}_2}_2
						+\tilde{D}^\top(k^{\bar{u}_2}_2-\mathbb{E}k^{\bar{u}_2}_2)+\check{D}^\top\mathbb{E}k^{\bar{u}_2}_2\\
						&\qquad\qquad +\tilde{N}^\top(p^{\bar{u}_2}_2-\mathbb{E}p^{\bar{u}_2}_2)+\check{N}^\top\mathbb{E}p^{\bar{u}_2}_2+\tilde{S}_1(\bar{x}-\mathbb{E}\bar{x})
						+\check{S}_1\mathbb{E}\bar{x}+\tilde{S}_2(\bar{\eta}_1-\mathbb{E}\bar{\eta}_1)+\check{S}_2\mathbb{E}\bar{\eta}_1\\
						&\qquad\qquad +\tilde{S}_3(\bar{\zeta}_1-\mathbb{E}\bar{\zeta}_1)
						+\check{S}_3\mathbb{E}\bar{\zeta}_1 +\tilde{R}(\bar{u}_2-\mathbb{E}\bar{u}_2)+\check{R}\mathbb{E}\bar{u}_2+\tilde{\rho}-\mathbb{E}\tilde{\rho}
						+\check{\rho},u_2 \big\rangle ds \biggr\}\\
						&\quad +\epsilon^2\mathbb{E}\biggl\{ \big\langle G^2\big(x_{0l}(T)-\mathbb{E}x_{0l}(T)\big),x_{0l}(T)-\mathbb{E}x_{0l}(T)\big\rangle
						+\big\langle \boldsymbol{G}^2\mathbb{E}x_{0l}(T),\mathbb{E}x_{0l}(T) \big\rangle\\
						&\qquad\qquad +\int_t^T\Big[ \big\langle \tilde{Q}_{11}(x_{0l}-\mathbb{E}x_{0l}),x_{0l}-\mathbb{E}x_{0l} \big\rangle
						+2\big\langle \tilde{Q}_{12}(x_{0l}-\mathbb{E}x_{0l}),\eta_{0l}-\mathbb{E}\eta_{0l}\big\rangle\\
						&\qquad\qquad\qquad +2\big\langle \tilde{Q}_{13}(x_{0l}-\mathbb{E}x_{0l}),\zeta_{0l}-\mathbb{E}\zeta_{0l}\big\rangle
						+\big\langle \tilde{Q}_{22}(\eta_{0l}-\mathbb{E}\eta_{0l}),\eta_{0l}-\mathbb{E}\eta_{0l}\big\rangle\\
						&\qquad\qquad\qquad +2\big\langle \tilde{Q}_{23}(\eta_{0l}-\mathbb{E}\eta_{0l}),\zeta_{0l}-\mathbb{E}\zeta_{0l}\big\rangle
						+\big\langle \tilde{Q}_{33}(\zeta_{0l}-\mathbb{E}\zeta_{0l}),\zeta_{0l}-\mathbb{E}\zeta_{0l}\big\rangle \\
						&\qquad\qquad\qquad +2\big\langle \tilde{S}_1(x_{0l}-\mathbb{E}x_{0l}),u_2-\mathbb{E}u_2\big\rangle
						+2\big\langle \tilde{S}_2(\eta_{0l}-\mathbb{E}\eta_{0l}),u_2-\mathbb{E}u_2\big\rangle\\
						&\qquad\qquad\qquad +2\big\langle \tilde{S}_3(\zeta_{0l}-\mathbb{E}\zeta_{0l}),u_2-\mathbb{E}u_2\big\rangle
						+\big\langle \tilde{R}(u_2-\mathbb{E}u_2),u_2-\mathbb{E}u_2\big\rangle\\
						&\qquad\qquad\qquad +\big\langle \check{Q}_{11}\mathbb{E}x_{0l},\mathbb{E}x_{0l} \big\rangle
						+\big\langle \check{Q}_{22}\mathbb{E}\eta_{0l},\mathbb{E}\eta_{0l}\big\rangle+2\big\langle \check{Q}_{12}\mathbb{E}x_{0l},\mathbb{E}\eta_{0l}\big\rangle\\
						&\qquad\qquad\qquad +2\big\langle \check{Q}_{13}\mathbb{E}x_{0l},\mathbb{E}\zeta_{0l}\big\rangle
						+2\big\langle \check{Q}_{23}\mathbb{E}\eta_{0l},\mathbb{E}\zeta_{0l}\big\rangle+\big\langle \check{Q}_{33}\mathbb{E}\zeta_{0l},\mathbb{E}\zeta_{0l}\big\rangle\\
						&\qquad\qquad\qquad +2\big\langle \check{S}_1\mathbb{E}x_{0l},\mathbb{E}u_2\big\rangle+2\big\langle \check{S}_2\mathbb{E}\eta_{0l},\mathbb{E}u_2\big\rangle
						+2\big\langle \check{S}_3\mathbb{E}\zeta_{0l},\mathbb{E}u_2\big\rangle+\big\langle \check{R}\mathbb{E}u_2,\mathbb{E}u_2\big\rangle\Big]ds \biggr\}.
				\end{aligned}\end{equation*}
				Therefore, $(\bar{x}(\cdot),\bar{\eta}_1(\cdot),\bar{\zeta}_1(\cdot),\bar{u}_2(\cdot))$ is an open-loop optimal quadruple of Problem (MF-SLQ)$_{l}$ if and only if (\ref{leader open stationarity}) and (\ref{leader convexity condition}) hold.
			\end{proof}
			
			Now we introduce the closed-loop system of Problem (MF-SLQ)$_l$. Taking $\big(\Theta_2(\cdot),\hat{\Theta}_2(\cdot),\Delta_2(\cdot),\\\hat{\Delta}_2(\cdot)\big) \in \mathcal{Q}_2[t,T]\times \mathcal{Q}_2[t,T]\times \mathcal{Q}_2[t,T]\times \mathcal{Q}_2[t,T]$ and $v_2(\cdot)\in \mathcal{U}_2[t,T]$, for any initial pair $(t,\xi) \in [0,T] \times L^2_{\mathcal{F}_t}(\Omega;\mathbb{R}^n)$, let us consider the following MF-FBSDE on $[t,T]$:
			\begin{equation}\label{leader closedloop system}
				\left\{\begin{aligned}
					d\bar{x}^{\Theta,\Delta,v}(s)&=\big\{ (\tilde{A}+\tilde{B}\Theta_2)(\bar{x}^{\Theta,\Delta,v}-\mathbb{E}\bar{x}^{\Theta,\Delta,v})+\big(\check{A}+\check{B}\hat{\Theta}_2\big)\mathbb{E}\bar{x}^{\Theta,\Delta,v}\\
					&\quad\ +(\tilde{M}+\tilde{B}\Delta_2)(\bar{\eta}_1^{\Theta,\Delta,v}-\mathbb{E}\bar{\eta}_1^{\Theta,\Delta,v})
					+(\check{M}+\check{B}\hat{\Delta}_2)\mathbb{E}\bar{\eta}_1^{\Theta,\Delta,v} \\
					&\quad\ +\tilde{F}(\bar{\zeta}_1^{\Theta,\Delta,v}-\mathbb{E}\bar{\zeta}_1^{\Theta,\Delta,v})+\check{F}\mathbb{E}\bar{\zeta}_1^{\Theta,\Delta,v}
					+\tilde{B}(v_2-\mathbb{E}v_2)+\check{B}\mathbb{E}v_2+\tilde{b}\big\}ds\\
					&\ +\big\{ (\tilde{C}+\tilde{D}\Theta_2)(\bar{x}^{\Theta,\Delta,v}-\mathbb{E}\bar{x}^{\Theta,\Delta,v})+(\check{C}+\check{D}\hat{\Theta}_2)\mathbb{E}\bar{x}^{\Theta,\Delta,v}\\
					&\quad\ +(\tilde{F}^\top+\tilde{D}\Delta_2)(\bar{\eta}_1^{\Theta,\Delta,v}-\mathbb{E}\bar{\eta}_1^{\Theta,\Delta,v})+(\check{F}^\top
					+\check{D}\hat{\Delta}_2)\mathbb{E}\bar{\eta}_1^{\Theta,\Delta,v}+\tilde{K}(\bar{\zeta}_1^{\Theta,\Delta,v} \\
					&\quad\ -\mathbb{E}\bar{\zeta}_1^{\Theta,\Delta,v})+\check{K}\mathbb{E}\bar{\zeta}_1^{\Theta,\Delta,v}+\tilde{D}(v_2-\mathbb{E}v_2)+\check{D}\mathbb{E}v_2+\tilde{\sigma}\big\}dW(s),\\
					-d\bar{\eta}_1^{\Theta,\Delta,v}(s)&=\big\{ (\tilde{A}^\top+\tilde{N}\Delta_2)(\bar{\eta}_1^{\Theta,\Delta,v}-\mathbb{E}\bar{\eta}_1^{\Theta,\Delta,v})
					+(\check{A}^\top+\check{N}\hat{\Delta}_2)\mathbb{E}\bar{\eta}_1^{\Theta,\Delta,v}\\	
					&\quad\ +\tilde{C}^\top(\bar{\zeta}_1^{\Theta,\Delta,v}-\mathbb{E}\bar{\zeta}_1^{\Theta,\Delta,v})+\check{C}^\top\mathbb{E}\bar{\zeta}_1^{\Theta,\Delta,v}
					+\tilde{N}\Theta_2(\bar{x}^{\Theta,\Delta,v}-\mathbb{E}\bar{x}^{\Theta,\Delta,v})\\
					&\quad\ +\check{N}\hat{\Theta}_2\mathbb{E}\bar{x}^{\Theta,\Delta,v}+\tilde{N}(v_2-\mathbb{E}v_2)+\check{N}\mathbb{E}v_2+\tilde{f}\big\}ds-\bar{\zeta}_1^{\Theta,\Delta,v}dW(s),\\
					\bar{x}^{\Theta,\Delta,v}(t)&=\xi,\,\,\,\,\,\bar{\eta}_1^{\Theta,\Delta,v}(T)=\boldsymbol{g}^1.
				\end{aligned}\right.
			\end{equation}
			This is a MF-FBSDE admitting a unique solution $(\bar{x}^{\Theta,\Delta,v}(\cdot)\equiv \bar{x}^{\Theta_2,\hat{\Theta}_2,\Delta_2,\hat{\Delta}_2,v_2}(\cdot),\bar{\eta}_1^{\Theta,\Delta,v}(\cdot),\\\bar{\zeta}_1^{\Theta,\Delta,v}(\cdot))$, which is called leader's closed-loop system of the original state equation (\ref{leader state}) under leader's closed-loop strategy $\big(\Theta_2(\cdot),\hat{\Theta}_2(\cdot),\Delta_2(\cdot),\hat{\Delta}_2(\cdot),v_2(\cdot)\big)$ which is independent of the initial state $\xi$. We denote $(\bar{x}^{\Theta,\Delta,v}(\cdot),\bar{\eta}_1^{\Theta,\Delta,v}(\cdot),\bar{\zeta}_1^{\Theta,\Delta,v}(\cdot))$ as $(\check{x}(\cdot),\check{\eta}_1(\cdot),\check{\zeta}_1(\cdot))$ and define
			\begin{equation}\label{lclf}
				\begin{aligned}
					&\hat{J}_2\big(t,\xi;\Theta_2(\check{x}-\mathbb{E}\check{x})+\hat{\Theta}_2\mathbb{E}\check{x}+\Delta_2(\check{\eta}_1-\mathbb{E}\check{\eta}_1)
					+\hat{\Delta}_2\mathbb{E}\check{\eta}_1+v_2\big)\\
					&=\mathbb{E} \bigg\{ \big\langle G^2\big(\check{x}(T)-\mathbb{E}\check{x}(T)\big),\check{x}(T)-\mathbb{E}\check{x}(T)\big\rangle
					+\big\langle \boldsymbol{G}^2\mathbb{E}\check{x}(T),\mathbb{E}\check{x}(T) \big\rangle+2\big\langle g^2,\check{x}(T)-\mathbb{E}\check{x}(T)\big\rangle\\
					&\qquad +2\big\langle \boldsymbol{g}^2,\mathbb{E}\check{x}(T)\big\rangle
					+\int_t^T \Big[ \big\langle (\tilde{Q}_{11}+\Theta_2^\top\tilde{S}_1+\tilde{S}^\top_1\Theta_2+\Theta_2^\top\tilde{R}\Theta_2)(\check{x}-\mathbb{E}\check{x}),\check{x}-\mathbb{E}\check{x} \big\rangle\\
					&\qquad +2\big\langle (\tilde{Q}_{12}+\Delta_2^\top\tilde{S}_1+\tilde{S}^\top_2\Theta_2+\Delta_2^\top\tilde{R}\Theta_2)(\check{x}
					-\mathbb{E}\check{x}),\check{\eta}_1-\mathbb{E}\check{\eta}_1 \big\rangle\\
					&\qquad +2\big\langle (\tilde{Q}_{13}+\tilde{S}^\top_3\Theta_2)(\check{x}-\mathbb{E}\check{x}),\check{\zeta}_1-\mathbb{E}\check{\zeta}_1 \big\rangle
					+2\big\langle (\tilde{Q}_{23}+\tilde{S}^\top_3\Delta_2)(\check{\eta}_1-\mathbb{E}\check{\eta}_1),\check{\zeta}_1-\mathbb{E}\check{\zeta}_1 \big\rangle\\
					&\qquad +\big\langle (\tilde{Q}_{22}+\Delta_2^\top\tilde{S}_2+\tilde{S}^\top_2\Delta_2+\Delta_2^\top\tilde{R}\Delta_2)(\check{\eta}_1-\mathbb{E}\check{\eta}_1),\check{\eta}_1-\mathbb{E}\check{\eta}_1 \big\rangle
					+\big\langle \tilde{Q}_{33}(\check{\zeta}_1-\mathbb{E}\check{\zeta}_1),\check{\zeta}_1-\mathbb{E}\check{\zeta}_1 \big\rangle\\
					&\qquad +2\big\langle (\tilde{S}_1+\tilde{R}\Theta_2)(\check{x}-\mathbb{E}\check{x}),v_2-\mathbb{E}v_2 \big\rangle
					+2\big\langle (\tilde{S}_2+\tilde{R}\Delta_2)(\check{\eta}_1-\mathbb{E}\check{\eta}_1),v_2-\mathbb{E}v_2 \big\rangle\\
					&\qquad +2\big\langle \tilde{S}_3(\check{\zeta}_1-\mathbb{E}\check{\zeta}_1),v_2-\mathbb{E}v_2 \big\rangle+\big\langle\tilde{R}(v_2-\mathbb{E}v_2),v_2-\mathbb{E}v_2 \big\rangle\\
					&\qquad +2\big\langle \tilde{q}_1-\mathbb{E}\tilde{q}_1+\Theta_2^\top(\tilde{\rho}-\mathbb{E}\tilde{\rho}),\check{x}-\mathbb{E}\check{x}\big\rangle
					+2\big\langle \tilde{q}_2-\mathbb{E}\tilde{q}_2+\Delta_2^\top(\tilde{\rho}-\mathbb{E}\tilde{\rho}),\check{\eta}_1-\mathbb{E}\check{\eta}_1\big\rangle\\
					&\qquad +2\big\langle \tilde{q}_3-\mathbb{E}\tilde{q}_3,\check{\zeta}_1-\mathbb{E}\check{\zeta}_1 \big \rangle +2\big\langle \tilde{\rho}-\mathbb{E}\tilde{\rho},v_2-\mathbb{E}v_2 \big\rangle
					+\big\langle (\check{Q}_{11}+\hat{\Theta}_2^\top\check{S}_1+\check{S}^\top_1\hat{\Theta}_2+\hat{\Theta}_2^\top\check{R}\hat{\Theta}_2)\mathbb{E}\check{x},\mathbb{E}\check{x} \big\rangle\\
					&\qquad +2\big\langle (\check{Q}_{12}+\hat{\Delta}^\top_2\check{S}_1+\check{S}^\top_2\hat{\Theta}_2+\hat{\Delta}^\top_2\check{R}\hat{\Theta}_2)\mathbb{E}\check{x},\mathbb{E}\check{\eta}_1\big\rangle
					+2\big\langle (\check{Q}_{13}+\check{S}^\top_3\hat{\Theta}_2)\mathbb{E}\check{x},\mathbb{E}\check{\zeta}_1\big\rangle\\
					&\qquad +\big\langle (\check{Q}_{22}+\check{S}^\top_2\hat{\Delta}_2+\hat{\Delta}^\top_2\check{S}_2+\hat{\Delta}^\top_2\check{R}\hat{\Delta}_2)\mathbb{E}\check{\eta}_1,\mathbb{E}\check{\eta}_1 \big\rangle
					+2\big\langle (\check{Q}_{23}+\check{S}^\top_3\hat{\Delta}_2)\mathbb{E}\check{\eta}_1,\mathbb{E}\check{\zeta}_1\big\rangle\\
					&\qquad +\big\langle \check{Q}_{33}\mathbb{E}\check{\zeta}_1,\mathbb{E}\check{\zeta}_1 \big \rangle+2\big\langle (\check{S}_1+\check{R}\hat{\Theta}_2)\mathbb{E}\check{x},\mathbb{E}v_2 \big\rangle
					+2\big\langle (\check{S}_2+\check{R}\hat{\Delta}_2)\mathbb{E}\check{\eta}_1,\mathbb{E}v_2 \big\rangle +2\big\langle \check{S}_3\mathbb{E}\check{\zeta}_1,\mathbb{E}v_2 \big\rangle\\
					&\qquad  +\big\langle \check{R}\mathbb{E}v_2,\mathbb{E}v_2 \big\rangle+2\big\langle \check{q}_1+\hat{\Theta}^\top_2\check{\rho},\mathbb{E}\check{x}\big\rangle
					+2\big\langle \check{q}_2+\hat{\Delta}^\top_2\check{\rho},\mathbb{E}\check{\eta}_1 \big\rangle
					+2\big\langle \check{q}_3,\mathbb{E}\check{\zeta}_1 \big\rangle+2\big\langle \check{\rho},\mathbb{E}v_2 \big\rangle\Big]ds+\mathcal{L}\bigg\}.
				\end{aligned}
			\end{equation}
			
			\begin{mydef}\label{def4.1}
				A quintuple $(\bar{\Theta}_2(\cdot),\bar{\hat{\Theta}}_2(\cdot),\bar{\Delta}_2(\cdot),\bar{\hat{\Delta}}_2(\cdot),\bar{v}_2(\cdot)) \in \mathcal{Q}_2[t,T] \times \mathcal{Q}_2[t,T]\times \mathcal{Q}_2[t,T]\times \mathcal{Q}_2[t,T] \times \mathcal{U}_2[t,T]$ is called a (unique) leader's closed-loop optimal strategy of Problem (MF-SLQ)$_l$ on $[t,T]$ if
				\begin{equation}
					\begin{aligned}
						&\hat{J}_2\big(t,\xi;\bar{\Theta}_2(\bar{x}-\mathbb{E}\bar{x})+\bar{\hat{\Theta}}_2\mathbb{E}\bar{x}+\bar{\Delta}_2(\bar{\eta}_1-\mathbb{E}\bar{\eta}_1)
						+\bar{\hat{\Delta}}_2\mathbb{E}\bar{\eta}_1+\bar{v}_2\big)\\
						&\leq \hat{J}_2\big(t,\xi;\Theta_2(\check{x}-\mathbb{E}\check{x})+\hat{\Theta}_2\mathbb{E}\check{x}+\Delta_2(\check{\eta}_1-\mathbb{E}\check{\eta}_1)
						+\hat{\Delta}_2\mathbb{E}\check{\eta}_1+v_2\big),\\
						&\forall (\Theta_2(\cdot),\hat{\Theta}_2(\cdot),\Delta_2(\cdot),\hat{\Delta}_2(\cdot),v_2(\cdot)) \in
						\mathcal{Q}_2[t,T] \times \mathcal{Q}_2[t,T]\times \mathcal{Q}_2[t,T]\times \mathcal{Q}_2[t,T] \times \mathcal{U}_2[t,T],\\
						&\hspace{5cm}\forall (t,\xi) \in [0,T] \times L^2_{\mathcal{F}_t}(\Omega;\mathbb{R}^n),
					\end{aligned}
				\end{equation}
				where $\bar{x}(\cdot)\equiv \bar{x}^{\bar{\Theta},\bar{\Delta},\bar{v}}(\cdot)$ together with $\bar{\eta}_1(\cdot)$ and $\bar{\zeta}_1(\cdot)$ satisfying (\ref{leader closedloop system}). In this time, Problem (MF-SLQ)$_l$ is said to be (uniquely) closed-loop solvable.
			\end{mydef}
			
			Similar to Lemma \ref{relation}, we have the following result.
			\begin{mypro}\label{pro4.3}
				Let (H1)-(H2) hold. Then the following are equivalent:
				\par (\romannumeral 1) $(\bar{\Theta}_2(\cdot),\bar{\hat{\Theta}}_2(\cdot),\bar{\Delta}_2(\cdot),\bar{\hat{\Delta}}_2(\cdot),\bar{v}_2(\cdot)) \in \mathcal{Q}_2[t,T] \times \mathcal{Q}_2[t,T]\times \mathcal{Q}_2[t,T]\times \mathcal{Q}_2[t,T] \times \mathcal{U}_2[t,T]$ is leader's closed-loop optimal strategy of Problem (MF-SLQ)$_l$.
				
				\par (\romannumeral 2) The following holds:
				\begin{equation*}
					\begin{aligned}
						&\hat{J}_2\big(t,\xi;\bar{\Theta}_2(\bar{x}-\mathbb{E}\bar{x})+\bar{\hat{\Theta}}_2\mathbb{E}\bar{x}+\bar{\Delta}_2(\bar{\eta}_1-\mathbb{E}\bar{\eta}_1)
						+\bar{\hat{\Delta}}_2\mathbb{E}\bar{\eta}_1+\bar{v}_2\big)\\
						&\quad \leq \hat{J}_2\big(t,\xi;\bar{\Theta}_2(\bar{x}^{\bar{\Theta},\bar{\Delta},v}-\mathbb{E}\bar{x}^{\bar{\Theta},\bar{\Delta},v})
						+\bar{\hat{\Theta}}_2\mathbb{E}\bar{x}^{\bar{\Theta},\bar{\Delta},v}+\bar{\Delta}_2(\bar{\eta}^{\bar{\Theta},\bar{\Delta},v}_1-\mathbb{E}\bar{\eta}^{\bar{\Theta},\bar{\Delta},v}_1)
						+\bar{\hat{\Delta}}_2\mathbb{E}\bar{\eta}^{\bar{\Theta},\bar{\Delta},v}_1+v_2\big),\\
						&\hspace{2cm}\forall (t,\xi) \in [0,T] \times L^2_{\mathcal{F}_t}(\Omega;\mathbb{R}^n),\,\, \forall v_2(\cdot) \in \mathcal{U}_2[t,T],
					\end{aligned}
				\end{equation*}
				where $(\bar{x}^{\bar{\Theta},\bar{\Delta},v}(\cdot)\equiv \bar{x}^{\bar{\Theta}_2,\bar{\hat{\Theta}}_2,\bar{\Delta}_2,\bar{\hat{\Delta}}_2,v_2}(\cdot),\bar{\eta}_1^{\bar{\Theta},\bar{\Delta},v}(\cdot),\bar{\zeta}_1^{\bar{\Theta},
					\bar{\Delta},v}(\cdot))$ is the solution of (\ref{leader closedloop system}).
				
				\par (\romannumeral 3) The following holds:
				\begin{equation*}
					\begin{aligned}
						&\hat{J}_2\big(t,\xi;\bar{\Theta}_2(\bar{x}-\mathbb{E}\bar{x})+\bar{\hat{\Theta}}_2\mathbb{E}\bar{x}+\bar{\Delta}_2(\bar{\eta}_1-\mathbb{E}\bar{\eta}_1)
						+\bar{\hat{\Delta}}_2\mathbb{E}\bar{\eta}_1+\bar{v}_2\big) \leq \hat{J}_2(t,\xi;u_2(\cdot)),\\
						&\hspace{2cm}\forall (t,\xi) \in [0,T] \times L^2_{\mathcal{F}_t}(\Omega;\mathbb{R}^n),\,\, \forall u_2(\cdot) \in \mathcal{U}_2[t,T].
					\end{aligned}
				\end{equation*}
			\end{mypro}
			
			By Proposition \ref{pro4.3} (\romannumeral 2) and Theorem \ref{leader open-loop ns condition}, we can obtain:
			\begin{equation}\label{leader optimal system0}
				\begin{aligned}
					0&=\tilde{N}^\top(\bar{p}-\mathbb{E}\bar{p})+\check{N}^\top\mathbb{E}\bar{p}+\tilde{B}^\top(\bar{q}-\mathbb{E}\bar{q})+\check{B}^\top\mathbb{E}\bar{q}
					+\tilde{D}^\top(\bar{k}-\mathbb{E}\bar{k})+\check{D}^\top\mathbb{E}\bar{k}\\
					&\quad +(\tilde{S}_1+\tilde{R}\bar{\Theta}_2)(\bar{x}-\mathbb{E}\bar{x})+(\check{S}_1+\check{R}\bar{\hat{\Theta}}_2)\mathbb{E}\bar{x}
					+(\tilde{S}_2+\tilde{R}\bar{\Delta}_2)(\bar{\eta}_1-\mathbb{E}\bar{\eta}_1)+(\check{S}_2+\check{R}\bar{\hat{\Delta}}_2)\mathbb{E}\bar{\eta}_1\\
					&\quad +\tilde{S}_3(\bar{\zeta}_1-\mathbb{E}\bar{\zeta}_1)+\check{S}_3\mathbb{E}\bar{\zeta}_1+\tilde{R}(\bar{v}_2-\mathbb{E}\bar{v}_2)
					+\check{R}\mathbb{E}\bar{v}_2+\tilde{\rho}-\mathbb{E}\tilde{\rho}+\check{\rho}, \quad  a.e.,\, \mathbb{P}\mbox{-}a.s.,
				\end{aligned}
			\end{equation}
			with
			\begin{equation}\label{leader optimal system}
				\left\{\begin{aligned}
					d\bar{x}(s)&=\big\{ (\tilde{A}+\tilde{B}\bar{\Theta}_2)(\bar{x}-\mathbb{E}\bar{x})+(\check{A}+\check{B}\bar{\hat{\Theta}}_2)\mathbb{E}\bar{x}
					+(\tilde{M}+\tilde{B}\bar{\Delta}_2)(\bar{\eta}_1-\mathbb{E}\bar{\eta}_1)\\
					&\quad\ +(\check{M}+\check{B}\bar{\hat{\Delta}}_2)\mathbb{E}\bar{\eta}_1 +\tilde{F}(\bar{\zeta}_1-\mathbb{E}\bar{\zeta}_1)+\check{F}\mathbb{E}\bar{\zeta}_1
					+\tilde{B}(\bar{v}_2-\mathbb{E}\bar{v}_2)+\check{B}\mathbb{E}\bar{v}_2+\tilde{b}\big\}ds\\
					&\ +\big\{ (\tilde{C}+\tilde{D}\bar{\Theta}_2)(\bar{x}-\mathbb{E}\bar{x})+(\check{C}+\check{D}\bar{\hat{\Theta}}_2)\mathbb{E}\bar{x}+(\tilde{F}^\top
					+\tilde{D}\bar{\Delta}_2)(\bar{\eta}_1-\mathbb{E}\bar{\eta}_1)\\
					&\quad\ +(\check{F}^\top+\check{D}\bar{\hat{\Delta}}_2)\mathbb{E}\bar{\eta}_1+\tilde{K}(\bar{\zeta}_1-\mathbb{E}\bar{\zeta}_1)
					+\check{K}\mathbb{E}\bar{\zeta}_1+\tilde{D}(\bar{v}_2-\mathbb{E}\bar{v}_2)+\check{D}\mathbb{E}\bar{v}_2+\tilde{\sigma}\big\}dW(s),\\
					d\bar{p}(s)&=\big\{(\tilde{A}^\top+\tilde{N}\bar{\Delta}_2)^\top(\bar{p}-\mathbb{E}\bar{p})+(\check{A}^\top+\check{N}\bar{\hat{\Delta}}_2)^\top\mathbb{E}\bar{p}
					+(\tilde{M}+\tilde{B}\bar{\Delta}_2)^\top(\bar{q}-\mathbb{E}\bar{q})\\
					&\quad\ +(\check{M}+\check{B}\bar{\hat{\Delta}}_2)^\top\mathbb{E}\bar{q}+(\tilde{F}^\top+\tilde{D}\bar{\Delta}_2)^\top(\bar{k}-\mathbb{E}\bar{k})+(\check{F}^\top
					+\check{D}\bar{\hat{\Delta}}_2)^\top\mathbb{E}\bar{k}\\
					&\quad\ +(\tilde{Q}_{12}+\bar{\Delta}_2^\top\tilde{S}_1+\tilde{S}^\top_2\bar{\Theta}_2+\bar{\Delta}_2^\top\tilde{R}\bar{\Theta}_2)(\bar{x}-\mathbb{E}\bar{x})
					+(\tilde{Q}_{23}+\tilde{S}^\top_3\bar{\Delta}_2)^\top(\bar{\zeta}_1-\mathbb{E}\bar{\zeta}_1)\\
					&\quad\ +(\check{Q}_{12}+\bar{\hat{\Delta}}^\top_2\check{S}_1+\check{S}^\top_2\bar{\hat{\Theta}}_2+\bar{\hat{\Delta}}^\top_2\check{R}\bar{\hat{\Theta}}_2)\mathbb{E}\bar{x}
					+(\check{Q}_{23}+\check{S}^\top_3\bar{\hat{\Delta}}_2)^\top\mathbb{E}\bar{\zeta}_1\\
					&\quad\ +(\tilde{Q}_{22}+\bar{\Delta}_2^\top\tilde{S}_2+\tilde{S}^\top_2\bar{\Delta}_2+\bar{\Delta}_2^\top\tilde{R}\bar{\Delta}_2)(\bar{\eta}_1-\mathbb{E}\bar{\eta}_1)
					+(\check{Q}_{22}+\check{S}^\top_2\bar{\hat{\Delta}}_2+\bar{\hat{\Delta}}^\top_2\check{S}_2\\
					&\quad\ +\bar{\hat{\Delta}}^\top_2\check{R}\bar{\hat{\Delta}}_2)\mathbb{E}\bar{\eta}_1+(\tilde{S}_2+\tilde{R}\bar{\Delta}_2)^\top(\bar{v}_2-\mathbb{E}\bar{v}_2)+(\check{S}_2
					+\check{R}\bar{\hat{\Delta}}_2)^\top\mathbb{E}\bar{v}_2\\
					&\quad\ +\tilde{q}_2-\mathbb{E}\tilde{q}_2+\check{q}_2+\bar{\Delta}_2^\top(\tilde{\rho}-\mathbb{E}\tilde{\rho})+\bar{\hat{\Delta}}^\top_2\check{\rho}\big\}ds
					+\big\{\tilde{C}(\bar{p}-\mathbb{E}\bar{p})+\check{C}\mathbb{E}\bar{p}+\tilde{F}^\top(\bar{q}-\mathbb{E}\bar{q})\\
					&\quad\ +\check{F}^\top\mathbb{E}\bar{q}+\tilde{K}^\top(\bar{k}-\mathbb{E}\bar{k})+\check{K}^\top\mathbb{E}\bar{k}+(\tilde{Q}_{13}
					+\tilde{S}^\top_3\bar{\Theta}_2)(\bar{x}-\mathbb{E}\bar{x})+(\check{Q}_{13}+\check{S}^\top_3\bar{\hat{\Theta}}_2)\mathbb{E}\bar{x}\\
					&\quad\ +(\tilde{Q}_{23}+\tilde{S}^\top_3\bar{\Delta}_2)(\bar{\eta}_1-\mathbb{E}\bar{\eta}_1)+\big(\check{Q}_{23}+\check{S}^\top_3\bar{\hat{\Delta}}_2\big)\mathbb{E}\bar{\eta}_1
					+\tilde{Q}_{33}\big(\bar{\zeta}_1-\mathbb{E}\bar{\zeta}_1\big)\\
					&\quad\ +\check{Q}_{33}\mathbb{E}\bar{\zeta}_1+\tilde{S}^\top_3(\bar{v}_2-\mathbb{E}\bar{v}_2)+\check{S}_3^\top\mathbb{E}\bar{v}_2+\tilde{q}_3-\mathbb{E}\tilde{q}_3
					+\check{q}_3\big\}dW(s),\\
					-d\bar{\eta}_1(s)&=\big\{ (\tilde{A}^\top+\tilde{N}\bar{\Delta}_2)(\bar{\eta}_1-\mathbb{E}\bar{\eta}_1)+(\check{A}^\top+\check{N}\bar{\hat{\Delta}}_2)\mathbb{E}\bar{\eta}_1
					+\tilde{C}^\top(\bar{\zeta}_1-\mathbb{E}\bar{\zeta}_1)+\check{C}^\top\mathbb{E}\bar{\zeta}_1\\
					&\quad\ +\tilde{N}\bar{\Theta}_2(\bar{x}-\mathbb{E}\bar{x})+\check{N}\bar{\hat{\Theta}}_2\mathbb{E}\bar{x}+\tilde{N}(\bar{v}_2-\mathbb{E}\bar{v}_2)
					+\check{N}\mathbb{E}\bar{v}_2+\tilde{f}\big\}ds-\bar{\zeta}_1dW(s),\\
					-d\bar{q}(s)&=\big\{\tilde{A}^\top(\bar{q}-\mathbb{E}\bar{q})+\check{A}^\top\mathbb{E}\bar{q}+\tilde{C}^\top(\bar{k}-\mathbb{E}\bar{k})+\check{C}^\top\mathbb{E}\bar{k}
					+(\tilde{Q}_{11}+\tilde{S}^\top_1\bar{\Theta}_2)(\bar{x}-\mathbb{E}\bar{x})\\
					&\quad\ +(\check{Q}_{11}+\check{S}^\top_1\bar{\hat{\Theta}}_2)\mathbb{E}\bar{x}+(\tilde{Q}_{12}^\top+\tilde{S}^\top_1\bar{\Delta}_2)(\bar{\eta}_1-\mathbb{E}\bar{\eta}_1)
					+(\check{Q}^\top_{12}+\check{S}^\top_1\bar{\hat{\Delta}}_2)\mathbb{E}\bar{\eta}_1+\tilde{Q}^\top_{13}(\bar{\zeta}_1\\
					&\quad\ -\mathbb{E}\bar{\zeta}_1)+\check{Q}^\top_{13}\mathbb{E}\bar{\zeta}_1+\tilde{S}^\top_1(\bar{v}_2-\mathbb{E}\bar{v}_2)
					+\check{S}^\top_1\mathbb{E}\bar{v}_2+\tilde{q}_1-\mathbb{E}\tilde{q}_1-\check{q}_1\big\}ds-\bar{k}dW(s),\\
					\bar{x}(t)&=\xi,\,\,\,\bar{p}(t)=0,\,\,\,\bar{\eta}_1(T)=\boldsymbol{g}^1,\,\,\,\bar{q}(T)=G^2\big(\bar{x}(T)-\mathbb{E}\bar{x}(T)\big)+\boldsymbol{G}^2\mathbb{E}\bar{x}(T)
					+\boldsymbol{g}^2.
				\end{aligned}\right.
			\end{equation}
			
			\begin{Remark}
				According to the equivalence of (\romannumeral 1) and (\romannumeral 2) in Proposition \ref{pro4.3}, $(\bar{\Theta}_2(\cdot),\bar{\hat{\Theta}}_2(\cdot),\bar{\Delta}_2(\cdot),\\\bar{\hat{\Delta}}_2(\cdot),\bar{v}_2(\cdot))$ is the closed-loop optimal strategy of the Problem (MF-SLQ)$_l$  if and only if $\bar{v}_2(\cdot)$ is the open-loop optimal control of the SLQ optimal control problem of (\ref{leader closedloop system}), (\ref{lclf}) with $\big(\Theta_2(\cdot),\\\hat{\Theta}_2(\cdot),\Delta_2(\cdot),\hat{\Delta}_2(\cdot)\big)=\big(\bar{\Theta}_2(\cdot),\bar{\hat{\Theta}}_2(\cdot),\bar{\Delta}_2(\cdot),
				\bar{\hat{\Delta}}_2(\cdot)\big)$. Let us label this problem for $v_2(\cdot)$ as \textbf{Problem (MF-SLQ)$^{\bar{\Theta},\bar{\Delta}}_l$}. The above MF-FBSDE is the optimality system of Problem (MF-SLQ)$^{\bar{\Theta},\bar{\Delta}}_l$. Similarly, this conclusion holds for Problem (MF-SLQ)$_{l0}$, and the corresponding problem about $v_2(\cdot)$ of Problem (MF-SLQ)$_{l0}$ is denoting as \textbf{Problem (MF-SLQ)$^{\bar{\Theta},\bar{\Delta}}_{l0}$}.
			\end{Remark}
			
			\begin{mypro}
				Let (H1)-(H2) hold. If $(\bar{\Theta}_2(\cdot),\bar{\hat{\Theta}}_2(\cdot),\bar{\Delta}_2(\cdot),\bar{\hat{\Delta}}_2(\cdot),\bar{v}_2(\cdot)) \in \mathcal{Q}_2[t,T] \times \mathcal{Q}_2[t,T]\times \mathcal{Q}_2[t,T]\times \mathcal{Q}_2[t,T] \times \mathcal{U}_2[t,T]$ is the closed-loop optimal strategy of Problem (MF-SLQ)$_l$, then $(\bar{\Theta}_2(\cdot),\bar{\hat{\Theta}}_2(\cdot),\bar{\Delta}_2(\cdot),\bar{\hat{\Delta}}_2(\cdot),0)$ is the closed-loop optimal strategy of Problem (MF-SLQ)$_{l0}$ on $[t,T]$.
			\end{mypro}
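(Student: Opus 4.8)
The plan is to reduce the statement, via the equivalences of Proposition \ref{pro4.1}, to an assertion purely about the ``frozen-gain'' problems in which the feedback gains are held at $(\bar{\Theta}_2,\bar{\hat{\Theta}}_2,\bar{\Delta}_2,\bar{\hat{\Delta}}_2)$ and only the affine part of the control varies. By the equivalence (\romannumeral 1)$\Leftrightarrow$(\romannumeral 2) of Proposition \ref{pro4.1}, the hypothesis that $(\bar{\Theta}_2,\bar{\hat{\Theta}}_2,\bar{\Delta}_2,\bar{\hat{\Delta}}_2,\bar{v}_2)$ is a closed-loop optimal strategy of Problem (MF-SLQ)$_l$ is equivalent to saying that $\bar{v}_2(\cdot)$ is an open-loop optimal control of Problem (MF-SLQ)$^{\bar{\Theta},\bar{\Delta}}_l$ for \emph{every} initial pair $(t,\xi)$. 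Likewise, by the same equivalence applied to Problem (MF-SLQ)$_{l0}$ (see the Remark following Proposition \ref{pro4.1}), it suffices to prove that $v_2(\cdot)\equiv 0$ is an open-loop optimal control of Problem (MF-SLQ)$^{\bar{\Theta},\bar{\Delta}}_{l0}$ for every $(t,\xi)$. Thus the whole argument is carried out at the level of the frozen-gain cost $\hat{J}_2^{\bar{\Theta},\bar{\Delta}}(t,\xi;v_2)$ attached to (\ref{leader closedloop system}) and (\ref{lclf}) with $(\Theta_2,\hat{\Theta}_2,\Delta_2,\hat{\Delta}_2)=(\bar{\Theta}_2,\bar{\hat{\Theta}}_2,\bar{\Delta}_2,\bar{\hat{\Delta}}_2)$.

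Next I would exploit linearity. With frozen gains, (\ref{leader closedloop system}) is a linear mean-field FBSDE that is affine in $(\xi,v_2)$ and in the inhomogeneous data $(\tilde{b},\tilde{\sigma},\tilde{f},g^1+\hat{g}^1)$, so its solution splits as $(\check{x},\check{\eta}_1,\check{\zeta}_1)=(\check{x}^0,\check{\eta}_1^0,\check{\zeta}_1^0)+(x^*,\eta^*,\zeta^*)$, where the first triple is the homogeneous response to $(\xi,v_2)$ (all data set to zero) and the second is the particular response to the data alone (zero initial datum and $v_2=0$). Substituting this splitting into the quadratic-affine functional (\ref{lclf}) and collecting terms, I obtain, for every $(t,\xi)$ and every $v_2\in\mathcal{U}_2[t,T]$,
\begin{equation*}
\hat{J}_2^{\bar{\Theta},\bar{\Delta}}(t,\xi;v_2)=\hat{J}_2^{0,\bar{\Theta},\bar{\Delta}}(t,\xi;v_2)+2\big\langle\Lambda\xi+\lambda_0,v_2\big\rangle+\mu(\xi),
\end{equation*}
in which $\hat{J}_2^{0,\bar{\Theta},\bar{\Delta}}(t,\xi;v_2)$ is exactly the frozen-gain cost of Problem (MF-SLQ)$^{\bar{\Theta},\bar{\Delta}}_{l0}$ (the pure quadratic form of $(\xi,v_2)$), $\Lambda$ is the bounded linear cross operator coupling $\xi$ to $v_2$, $\lambda_0$ collects the data-induced linear contribution in $v_2$, and $\mu(\xi)$ gathers all terms independent of $v_2$. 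The key point is that erasing the inhomogeneous data removes only the affine and constant pieces and leaves the quadratic Hessian untouched, so the Hessian of $\hat{J}_2^{\bar{\Theta},\bar{\Delta}}$ in $(\xi,v_2)$ equals that of $\hat{J}_2^{0,\bar{\Theta},\bar{\Delta}}$; this identification must be performed while respecting the fluctuation/mean splitting ($X-\mathbb{E}X$ versus $\mathbb{E}X$) and the forward-backward coupling.

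The crucial step then uses that one and the same process $\bar{v}_2(\cdot)$ is an open-loop minimizer for \emph{all} initial states $\xi$. Writing $\hat{J}_2^{\bar{\Theta},\bar{\Delta}}(t,\xi;v_2)\ge\hat{J}_2^{\bar{\Theta},\bar{\Delta}}(t,\xi;\bar{v}_2)$ and inserting the decomposition, the $v_2$-independent part $\mu(\xi)$ cancels, leaving
\begin{equation*}
\big[\hat{J}_2^{0,\bar{\Theta},\bar{\Delta}}(t,0;v_2)-\hat{J}_2^{0,\bar{\Theta},\bar{\Delta}}(t,0;\bar{v}_2)\big]+2\big\langle\lambda_0,v_2-\bar{v}_2\big\rangle+2\big\langle\Lambda\xi,v_2-\bar{v}_2\big\rangle\ge 0
\end{equation*}
for all $\xi$ and all $v_2$. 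Replacing $\xi$ by $c\xi$ with $c\in\mathbb{R}$ arbitrary, the only $c$-dependent term is the linear one $2c\langle\Lambda\xi,v_2-\bar{v}_2\rangle$, so it must vanish; letting $\xi$ and $v_2$ range freely forces $\Lambda=0$, i.e. the state-control cross coupling is absent. Setting $\xi=0$ in the same inequality shows that $\bar{v}_2$ minimizes $v_2\mapsto\hat{J}_2^{0,\bar{\Theta},\bar{\Delta}}(t,0;v_2)+2\langle\lambda_0,v_2\rangle$, and the existence of a minimizer of this quadratic functional forces the underlying quadratic form to be nonnegative, i.e. $\hat{J}_2^{0,\bar{\Theta},\bar{\Delta}}(t,0;v_2)\ge 0$ for all $v_2$.

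Finally, because $\Lambda=0$, the homogeneous cost is additive in the initial datum, $\hat{J}_2^{0,\bar{\Theta},\bar{\Delta}}(t,\xi;v_2)=\hat{J}_2^{0,\bar{\Theta},\bar{\Delta}}(t,\xi;0)+\hat{J}_2^{0,\bar{\Theta},\bar{\Delta}}(t,0;v_2)$, and the nonnegativity just obtained yields $\hat{J}_2^{0,\bar{\Theta},\bar{\Delta}}(t,\xi;v_2)\ge\hat{J}_2^{0,\bar{\Theta},\bar{\Delta}}(t,\xi;0)$ for every $(t,\xi)$ and every $v_2$. Hence $v_2\equiv 0$ is an open-loop optimal control of Problem (MF-SLQ)$^{\bar{\Theta},\bar{\Delta}}_{l0}$ for all $(t,\xi)$, and the equivalence (\romannumeral 2)$\Rightarrow$(\romannumeral 1) of Proposition \ref{pro4.1} for Problem (MF-SLQ)$_{l0}$ shows that $(\bar{\Theta}_2,\bar{\hat{\Theta}}_2,\bar{\Delta}_2,\bar{\hat{\Delta}}_2,0)$ is a closed-loop optimal strategy of Problem (MF-SLQ)$_{l0}$ on $[t,T]$. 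I expect the main obstacle to be the bookkeeping of the second paragraph: verifying that suppressing the data genuinely preserves the Hessian and produces an honestly $\xi$-affine cross term $\Lambda\xi$, all while keeping the mean-field components and the FBSDE coupling correctly aligned, so that the clean additive structure exploited in the last two paragraphs is legitimate.
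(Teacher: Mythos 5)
Your proof is correct, but it takes a genuinely different route from the paper's. You work entirely at the level of the cost functional: using the affine dependence of the solution of (\ref{leader closedloop system}) on $(\xi,v_2)$ and on the inhomogeneous data, you polarize the quadratic cost, use the hypothesis that one and the same $\bar{v}_2$ is optimal for \emph{every} initial state to scale $\xi\mapsto c\xi$ and annihilate the $\xi$--$v_2$ cross operator $\Lambda$, and then extract nonnegativity of the homogeneous quadratic form from the mere existence of a minimizer; optimality of $v_2\equiv 0$ follows from the resulting additivity. The paper instead stays at the level of first-order conditions: by Theorem \ref{leader open-loop ns condition} and Proposition \ref{pro4.1}, closed-loop optimality is equivalent to solvability of the optimality system (\ref{leader optimal system}) (stationarity included) for every $\xi$ together with a convexity condition, and since that system is affine while the strategy is independent of $\xi$, subtracting the solution for initial state $0$ from the solution for initial state $\xi$ yields exactly the homogeneous system (\ref{problem(l0) optimal system-1}) with stationarity (\ref{problem(l0) optimal system-2}), i.e.\ the optimality system of Problem (MF-SLQ)$^{\bar{\Theta},\bar{\Delta}}_{l0}$ with $\bar{v}_2=0$; the convexity condition is the same for both problems, so it transfers for free. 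Your variational route is more elementary and self-contained --- it never touches the adjoint processes $(\bar{p},\bar{q},\bar{k})$ and makes transparent that the initial-state independence of the strategy is precisely what kills the cross coupling --- whereas the paper's subtraction argument has the side benefit of producing (\ref{problem(l0) optimal system-1})--(\ref{problem(l0) optimal system-2}) explicitly, which is exactly the system the remainder of Section \ref{leader's problem} decouples to derive the Riccati equations (\ref{P2}) and (\ref{Pi2}), so the paper's proof does double duty. One cosmetic slip: in your first display the homogeneous cost should be evaluated at initial state $0$ (or else the term $2\langle\Lambda\xi,v_2\rangle$ must be dropped), since the $\xi$--$v_2$ cross term is already contained in $\hat{J}_2^{0,\bar{\Theta},\bar{\Delta}}(t,\xi;v_2)$; your later displays use the correct polarized form, so the argument itself is unaffected.
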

			
			\begin{proof}
				By the preceding discussion, we see that $(\bar{\Theta}_2(\cdot),\bar{\hat{\Theta}}_2(\cdot),\bar{\Delta}_2(\cdot),\bar{\hat{\Delta}}_2(\cdot),\bar{v}_2(\cdot))$ is the closed-loop optimal strategy of Problem (MF-SLQ)$_l$ on $[t,T]$ if and only if for any initial pair $(t,\xi) \in [0,T]\times L^2_{\mathcal{F}_t}(\Omega;\mathbb{R}^n)$, system (\ref{leader optimal system}) has the adapted solution $\big(\bar{x}(\cdot),\bar{\eta}_1(\cdot),\bar{\zeta}_1(\cdot),\bar{p}(\cdot),\bar{q}(\cdot),\bar{k}(\cdot)\big)$, the stationarity condition (\ref{leader optimal system0}) holds and
				\begin{equation*}
					\begin{aligned}
						&\mathbb{E}\bigg\{ \big\langle G^2\big(x^0(T)-\mathbb{E}x^0(T)\big),x^0(T)-\mathbb{E}x^0(T)\big\rangle+\big\langle \boldsymbol{G}^2\mathbb{E}x^0(T),\mathbb{E}x^0(T) \big\rangle\\
						&\quad +\int_t^T \Big[ \big\langle (\tilde{Q}_{11}+\bar{\Theta}_2^\top\tilde{S}_1+\tilde{S}^\top_1\bar{\Theta}_2+\bar{\Theta}_2^\top\tilde{R}\bar{\Theta}_2)(x^0-\mathbb{E}x^0),x^0-\mathbb{E}x^0 \big\rangle\\
						&\qquad +2\big\langle (\tilde{Q}_{12}+\bar{\Delta}_2^\top\tilde{S}_1+\tilde{S}^\top_2\bar{\Theta}_2+\bar{\Delta}_2^\top\tilde{R}\bar{\Theta}_2)(x^0-\mathbb{E}x^0),\eta^0-\mathbb{E}\eta^0 \big\rangle\\
						&\qquad +2\big\langle (\tilde{Q}_{13}+\tilde{S}^\top_3\bar{\Theta}_2)(x^0-\mathbb{E}x^0),\zeta^0-\mathbb{E}\zeta^0 \big\rangle
						+2\big\langle (\tilde{Q}_{23}+\tilde{S}^\top_3\bar{\Delta}_2)(\eta^0-\mathbb{E}\eta^0),\zeta^0-\mathbb{E}\zeta^0 \big\rangle\\
						&\qquad +\big\langle (\tilde{Q}_{22}+\bar{\Delta}_2^\top\tilde{S}_2+\tilde{S}^\top_2\bar{\Delta}_2+\bar{\Delta}_2^\top\tilde{R}\bar{\Delta}_2)(\eta^0-\mathbb{E}\eta^0),\eta^0-\mathbb{E}\eta^0 \big\rangle\\
						&\qquad +\big\langle \tilde{Q}_{33}(\zeta^0-\mathbb{E}\zeta^0),\zeta^0-\mathbb{E}\zeta^0 \big\rangle
						+2\big\langle (\tilde{S}_1+\tilde{R}\bar{\Theta}_2)(x^0-\mathbb{E}x^0),v_2-\mathbb{E}v_2 \big\rangle \\
						&\qquad +2\big\langle (\tilde{S}_2+\tilde{R}\bar{\Delta}_2)(\eta^0-\mathbb{E}\eta^0),v_2-\mathbb{E}v_2 \big\rangle+2\big\langle \tilde{S}_3(\zeta^0-\mathbb{E}\zeta^0),v_2-\mathbb{E}v_2 \big\rangle\\
						&\qquad +\big\langle\tilde{R}(v_2-\mathbb{E}v_2\big),v_2-\mathbb{E}v_2 \big\rangle+\big\langle(\check{Q}_{11}+\bar{\hat{\Theta}}_2^\top\check{S}_1+\check{S}^\top_1\bar{\hat{\Theta}}_2
						+\bar{\hat{\Theta}}_2^\top\check{R}\bar{\hat{\Theta}}_2)\mathbb{E}x^0,\mathbb{E}x^0 \big\rangle\\
						&\qquad +2\big\langle (\check{Q}_{12}+\bar{\hat{\Delta}}^\top_2\check{S}_1+\check{S}^\top_2\bar{\hat{\Theta}}_2
						+\bar{\hat{\Delta}}^\top_2\check{R}\bar{\hat{\Theta}}_2)\mathbb{E}x^0,\mathbb{E}\eta^0\big\rangle+2\big\langle (\check{Q}_{13}+\check{S}^\top_3\bar{\hat{\Theta}}_2)\mathbb{E}x^0,\mathbb{E}\zeta^0\big\rangle\\
						&\qquad+\big\langle (\check{Q}_{22}+\check{S}^\top_2\bar{\hat{\Delta}}_2+\bar{\hat{\Delta}}^\top_2\check{S}_2
						+\bar{\hat{\Delta}}^\top_2\check{R}\bar{\hat{\Delta}}_2)\mathbb{E}\eta^0,\mathbb{E}\eta^0 \big\rangle
						+2\big\langle(\check{Q}_{23}+\check{S}^\top_3\bar{\hat{\Delta}}_2)\mathbb{E}\eta^0,\mathbb{E}\zeta^0\big\rangle\\
						&\qquad+\big\langle \check{Q}_{33}\mathbb{E}\zeta^0,\mathbb{E}\zeta^0 \big \rangle+2\big\langle (\check{S}_1+\check{R}\bar{\hat{\Theta}}_2)\mathbb{E}x^0,\mathbb{E}v_2 \big\rangle
						+2\big\langle (\check{S}_2+\check{R}\bar{\hat{\Delta}}_2)\mathbb{E}\eta^0,\mathbb{E}v_2 \big\rangle\\
						&\qquad +2\big\langle \check{S}_3\mathbb{E}\zeta^0,\mathbb{E}v_2 \big\rangle+\big\langle \check{R}\mathbb{E}v_2,\mathbb{E}v_2 \big\rangle\Big]ds\bigg\} \geq 0,\quad \forall v_2(\cdot) \in \mathcal{U}_2[t,T],
					\end{aligned}
				\end{equation*}
				where $\big(x^0(\cdot),\eta^0(\cdot),\zeta^0(\cdot)\big)$ is the solution to
				\begin{equation*}
					\left\{\begin{aligned}
						dx^0(s)&=\big\{ (\tilde{A}+\tilde{B}\bar{\Theta}_2)(x^0-\mathbb{E}x^0)+(\check{A}+\check{B}\bar{\hat{\Theta}}_2)\mathbb{E}x^0+(\tilde{M}+\tilde{B}\bar{\Delta}_2)(\eta^0-\mathbb{E}\eta^0)\\
						&\qquad+(\check{M}+\check{B}\bar{\hat{\Delta}}_2)\mathbb{E}\eta^0+\tilde{F}(\zeta^0-\mathbb{E}\zeta^0)+\check{F}\mathbb{E}\zeta^0+\tilde{B}(v_2-\mathbb{E}v_2)
						+\check{B}\mathbb{E}v_2\big\}ds\\
						&\quad +\big\{ (\tilde{C}+\tilde{D}\bar{\Theta}_2)(x^0-\mathbb{E}x^0)+(\check{C}+\check{D}\bar{\hat{\Theta}}_2)\mathbb{E}x^0+(\tilde{F}^\top+\tilde{D}\bar{\Delta}_2)(\eta^0-\mathbb{E}\eta^0)\\
						&\qquad +(\check{F}^\top+\check{D}\bar{\hat{\Delta}}_2)\mathbb{E}\eta^0+\tilde{K}(\zeta^0-\mathbb{E}\zeta^0)+\check{K}\mathbb{E}\zeta^0+\tilde{D}(v_2-\mathbb{E}v_2)
						+\check{D}\mathbb{E}v_2\big\}dW(s),\\
						-d\eta^0(s)&=\big\{ (\tilde{A}^\top+\tilde{N}\bar{\Delta}_2)(\eta^0-\mathbb{E}\eta^0)+(\check{A}^\top+\check{N}\bar{\hat{\Delta}}_2)\mathbb{E}\eta^0
						+\tilde{C}^\top(\zeta^0-\mathbb{E}\zeta^0)+\check{C}^\top\mathbb{E}\zeta^0\\
						&\qquad +\tilde{N}\bar{\Theta}_2(x^0-\mathbb{E}x^0)+\check{N}\bar{\hat{\Theta}}_2\mathbb{E}x^0+\tilde{N}(v_2-\mathbb{E}v_2)+\check{N}\mathbb{E}v_2\big\}ds-\zeta^0dW(s),\\
						x^0(t)&=0,\,\,\,\,\,\eta^0(T)=0.
					\end{aligned}\right.
				\end{equation*}
				Since (\ref{leader optimal system}) admits a solution for each $\xi \in L^2_{\mathcal{F}_t}(\Omega;\mathbb{R}^n)$ and
				$(\bar{\Theta}_2(\cdot),\bar{\hat{\Theta}}_2(\cdot),\bar{\Delta}_2(\cdot),\bar{\hat{\Delta}}_2(\cdot),\bar{v}_2(\cdot))$ is independent of $\xi$, by subtracting solutions corresponding $\xi$ and $0$, the latter from the former, we see that for any $\xi \in L^2_{\mathcal{F}_t}(\Omega;\mathbb{R}^n)$, as long as $(\bar{x}_{l0}(\cdot),\bar{\eta}_{l0}(\cdot),\bar{\zeta}_{l0}(\cdot),\bar{p}_{l0}(\cdot),\bar{q}_{l0}(\cdot),\bar{k}_{l0}(\cdot))$ is the adapted solution to the following MF-FBSDE:
				\begin{equation}\label{problem(l0) optimal system-1}
					\left\{\begin{aligned}
						d\bar{x}_{l0}(s)&=\big\{ (\tilde{A}+\tilde{B}\bar{\Theta}_2)(\bar{x}_{l0}-\mathbb{E}\bar{x}_{l0})+(\check{A}+\check{B}\bar{\hat{\Theta}}_2)\mathbb{E}\bar{x}_{l0}
						+(\tilde{M}+\tilde{B}\bar{\Delta}_2)(\bar{\eta}_{l0}-\mathbb{E}\bar{\eta}_{l0})\\
						&\qquad +(\check{M}+\check{B}\bar{\hat{\Delta}}_2)\mathbb{E}\bar{\eta}_{l0}+\tilde{F}(\bar{\zeta}_{l0}-\mathbb{E}\bar{\zeta}_{l0})+\check{F}\mathbb{E}\bar{\zeta}_{l0}\big\}ds\\
						&\quad +\big\{(\tilde{C}+\tilde{D}\bar{\Theta}_2)(\bar{x}_{l0}-\mathbb{E}\bar{x}_{l0})+(\check{C}+\check{D}\bar{\hat{\Theta}}_2)\mathbb{E}\bar{x}_{l0}
						+(\tilde{F}^\top+\tilde{D}\bar{\Delta}_2)(\bar{\eta}_{l0}-\mathbb{E}\bar{\eta}_{l0})\\
						&\qquad +(\check{F}^\top+\check{D}\bar{\hat{\Delta}}_2)\mathbb{E}\bar{\eta}_{l0}+\tilde{K}(\bar{\zeta}_{l0}-\mathbb{E}\bar{\zeta}_{l0})+\check{K}\mathbb{E}\bar{\zeta}_{l0}\big\}dW(s),\\
						d\bar{p}_{l0}(s)&=\big\{(\tilde{A}^\top+\tilde{N}\bar{\Delta}_2)^\top(\bar{p}_{l0}-\mathbb{E}\bar{p}_{l0})+(\check{A}^\top+\check{N}\bar{\hat{\Delta}}_2)^\top\mathbb{E}\bar{p}_{l0}
						+(\tilde{M}+\tilde{B}\bar{\Delta}_2)^\top(\bar{q}_{l0}\\
						&\qquad -\mathbb{E}\bar{q}_{l0})+(\check{M}+\check{B}\bar{\hat{\Delta}}_2)^\top\mathbb{E}\bar{q}_{l0}
						+(\tilde{F}^\top+\tilde{D}\bar{\Delta}_2)^\top(\bar{k}_{l0}-\mathbb{E}\bar{k}_{l0})+(\check{F}^\top\\
						&\qquad +\check{D}\bar{\hat{\Delta}}_2)^\top\mathbb{E}\bar{k}_{l0}+(\tilde{Q}_{12}+\bar{\Delta}_2^\top\tilde{S}_1+\tilde{S}^\top_2\bar{\Theta}_2
						+\bar{\Delta}_2^\top\tilde{R}\bar{\Theta}_2)(\bar{x}_{l0}-\mathbb{E}\bar{x}_{l0})\\
						&\qquad +(\tilde{Q}_{23}+\tilde{S}^\top_3\bar{\Delta}_2)^\top(\bar{\zeta}_{l0} -\mathbb{E}\bar{\zeta}_{l0})+(\check{Q}_{12}+\bar{\hat{\Delta}}^\top_2\check{S}_1
						+\check{S}^\top_2\bar{\hat{\Theta}}_2+\bar{\hat{\Delta}}^\top_2\check{R}\bar{\hat{\Theta}}_2)\mathbb{E}\bar{x}_{l0}\\
						&\qquad +(\check{Q}_{22}+\check{S}^\top_2\bar{\hat{\Delta}}_2+\bar{\hat{\Delta}}^\top_2\check{S}_2
						+\bar{\hat{\Delta}}^\top_2\check{R}\bar{\hat{\Delta}}_2)\mathbb{E}\bar{\eta}_{l0}
						+(\check{Q}_{23}+\check{S}^\top_3\bar{\hat{\Delta}}_2)^\top\mathbb{E}\bar{\zeta}_{l0}\\
						&\qquad +(\tilde{Q}_{22}+\bar{\Delta}_2^\top\tilde{S}_2+\tilde{S}^\top_2\bar{\Delta}_2
						+\bar{\Delta}_2^\top\tilde{R}\bar{\Delta}_2)(\bar{\eta}_{l0}-\mathbb{E}\bar{\eta}_{l0})\big\}ds+\big\{\tilde{C}(\bar{p}_{l0}-\mathbb{E}\bar{p}_{l0})\\
						&\qquad +\check{C}\mathbb{E}\bar{p}_{l0}+\tilde{F}^\top(\bar{q}_{l0}-\mathbb{E}\bar{q}_{l0})+\check{F}^\top\mathbb{E}\bar{q}_{l0}
						+\tilde{K}^\top(\bar{k}_{l0}-\mathbb{E}\bar{k}_{l0})+\check{K}^\top\mathbb{E}\bar{k}_{l0}\\
						&\qquad +(\tilde{Q}_{13}+\tilde{S}^\top_3\bar{\Theta}_2)(\bar{x}_{l0}-\mathbb{E}\bar{x}_{l0})+(\check{Q}_{13}
						+\check{S}^\top_3\bar{\hat{\Theta}}_2)\mathbb{E}\bar{x}_{l0}+(\tilde{Q}_{23}+\tilde{S}^\top_3\bar{\Delta}_2)(\bar{\eta}_{l0}\\
						&\qquad -\mathbb{E}\bar{\eta}_{l0}) +(\check{Q}_{23}+\check{S}^\top_3\bar{\hat{\Delta}}_2)\mathbb{E}\bar{\eta}_{l0}+\tilde{Q}_{33}(\bar{\zeta}_{l0}-\mathbb{E}\bar{\zeta}_{l0})+\check{Q}_{33}\mathbb{E}\bar{\zeta}_{l0}\big\}dW(s),\\
						-d\bar{\eta}_{l0}(s)&=\big\{ (\tilde{A}^\top+\tilde{N}\bar{\Delta}_2)(\bar{\eta}_{l0}-\mathbb{E}\bar{\eta}_{l0}\big)+(\check{A}^\top
						+\check{N}\bar{\hat{\Delta}}_2)\mathbb{E}\bar{\eta}_{l0}+\tilde{C}^\top(\bar{\zeta}_{l0}-\mathbb{E}\bar{\zeta}_{l0})\\
						&\quad\ +\check{C}^\top\mathbb{E}\bar{\zeta}_{l0}+\tilde{N}\bar{\Theta}_2(\bar{x}_{l0}-\mathbb{E}\bar{x}_{l0})+\check{N}\bar{\hat{\Theta}}_2\mathbb{E}\bar{x}_{l0}\big\}ds
						-\bar{\zeta}_{l0}dW(s),\\
						-d\bar{q}_{l0}(s)&=\big\{\tilde{A}^\top(\bar{q}_{l0}-\mathbb{E}\bar{q}_{l0})+\check{A}^\top\mathbb{E}\bar{q}_{l0}+\tilde{C}^\top(\bar{k}_{l0}
						-\mathbb{E}\bar{k}_{l0})+\check{C}^\top\mathbb{E}\bar{k}_{l0}+(\tilde{Q}_{11}\\
						&\quad\ +\tilde{S}^\top_1\bar{\Theta}_2)(\bar{x}_{l0}-\mathbb{E}\bar{x}_{l0})+(\check{Q}_{11}+\check{S}^\top_1\bar{\hat{\Theta}}_2)\mathbb{E}\bar{x}_{l0}
						+(\tilde{Q}_{12}^\top+\tilde{S}^\top_1\bar{\Delta}_2)(\bar{\eta}_{l0}-\mathbb{E}\bar{\eta}_{l0})\\
						&\quad\ +(\check{Q}^\top_{12}+\check{S}^\top_1\bar{\hat{\Delta}}_2)\mathbb{E}\bar{\eta}_{l0}+\tilde{Q}^\top_{13}(\bar{\zeta}_{l0}-\mathbb{E}\bar{\zeta}_{l0})
						+\check{Q}^\top_{13}\mathbb{E}\bar{\zeta}_{l0}\big\}ds-\bar{k}_{l0}dW(s),\\
						\bar{x}_{l0}(t)&=\xi,\,\,\,\bar{p}_{l0}(t)=0,\,\,\,\bar{\eta}_{l0}(T)=0,\,\,\,
						\bar{q}_{l0}(T)=G^2\big(\bar{x}_{l0}(T)-\mathbb{E}\bar{x}_{l0}(T)\big)+ \boldsymbol{G}^2\mathbb{E}\bar{x}_{l0}(T),	
					\end{aligned}
					\right.
				\end{equation}
				it must have
				\begin{equation}\label{problem(l0) optimal system-2}
					\begin{aligned}
						0&=\tilde{N}^\top(\bar{p}_{l0}-\mathbb{E}\bar{p}_{l0})+\check{N}^\top\mathbb{E}\bar{p}_{l0}+\tilde{B}^\top(\bar{q}_{l0}-\mathbb{E}\bar{q}_{l0})
						+\check{B}^\top\mathbb{E}\bar{q}_{l0}+\tilde{D}^\top(\bar{k}_{l0}-\mathbb{E}\bar{k}_{l0})\\
						&\quad +\check{D}^\top\mathbb{E}\bar{k}_{l0}+(\tilde{S}_1+\tilde{R}\bar{\Theta}_2)(\bar{x}_{l0}-\mathbb{E}\bar{x}_{l0})+(\check{S}_1+\check{R}\bar{\hat{\Theta}}_2)\mathbb{E}\bar{x}_{l0}
						+(\tilde{S}_2+\tilde{R}\bar{\Delta}_2)(\bar{\eta}_{l0}-\mathbb{E}\bar{\eta}_{l0})\\
						&\quad +(\check{S}_2+\check{R}\bar{\hat{\Delta}}_2)\mathbb{E}\bar{\eta}_{l0}+\tilde{S}_3(\bar{\zeta}_{l0}-\mathbb{E}\bar{\zeta}_{l0})+\check{S}_3\mathbb{E}\bar{\zeta}_{l0},\quad a.e.,\, \mathbb{P}\mbox{-}a.s..
					\end{aligned}
				\end{equation}
				It follows, again from Theorem \ref{leader open-loop ns condition}, (\romannumeral 1) and (\romannumeral 2) in Proposition \ref{pro4.3}, that $\bar{v}_2(\cdot)=0$ is an open-loop optimal control of Problem (MF-SLQ)$_{l0}^{\bar{\Theta},\bar{\Delta}}$ and $(\bar{\Theta}_2(\cdot),\bar{\hat{\Theta}}_2(\cdot),\bar{\Delta}_2(\cdot),\bar{\hat{\Delta}}_2(\cdot),0)$ is the closed-loop optimal strategy of Problem (MF-SLQ)$_{l0}$ on $[t,T]$.
			\end{proof}
			
			It is clear that when we want to study necessary conditions for the closed-loop solvability of Problem (MF-SLQ)$_l$, we can transform the original problem into the open-loop solvability of Problem (MF-SLQ)$^{\bar{\Theta},\bar{\Delta}}_{l0}$. To summarize the relationship between Problem (MF-SLQ)$_l$, Problem (MF-SLQ)$_{l0}$, Problem (MF-SLQ)$^{\bar{\Theta},\bar{\Delta}}_l$ and Problem (MF-SLQ)$^{\bar{\Theta},\bar{\Delta}}_{l0}$, we plot the following diagram in the following Figure 1.
			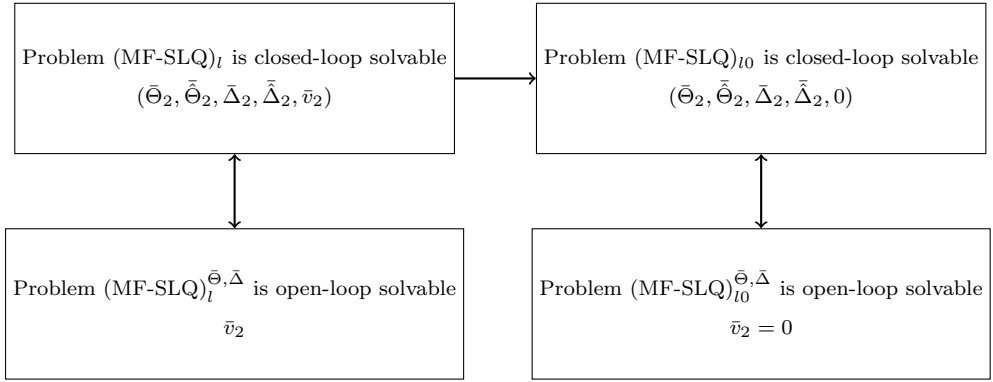
\begin{figure}[H]
				\scriptsize
				\tikzstyle{format} = [rectangle, minimum width = 3cm, minimum height=2cm, align=center, draw = black]
				\tikzstyle{test}=[diamond,aspect=10,draw,thin]
				\tikzstyle{point}=[coordinate,on grid,]
				\begin{tikzpicture}
					\node[format] (SLQP)
					{Problem (MF-SLQ)$_l$ is closed-loop solvable \\[1ex] $(\bar{\Theta}_2,\bar{\hat{\Theta}}_2,\bar{\Delta}_2,\bar{\hat{\Delta}}_2,\bar{v}_2)$};
					\node[format,below of=SLQP,node distance=3cm](SLQPT)
					{Problem (MF-SLQ)$^{\bar{\Theta},\bar{\Delta}}_l$ is open-loop solvable \\[1ex] $\bar{v}_2$};
					\node[format,right of=SLQP,node distance=7cm](SLQP0)
					{Problem (MF-SLQ)$_{l0}$ is closed-loop solvable \\[1ex] $(\bar{\Theta}_2,\bar{\hat{\Theta}}_2,\bar{\Delta}_2,\bar{\hat{\Delta}}_2,0)$};
					\node[format,below of=SLQP0,node distance=3cm](SLQPT0)
					{Problem (MF-SLQ)$^{\bar{\Theta},\bar{\Delta}}_{l0}$ is open-loop solvable \\[1ex] $\bar{v}_2=0$};
					\draw[->,thick](SLQP)--(SLQP0);
					\draw[<->,thick](SLQP)--(SLQPT);
					\draw[<->,thick](SLQP0)--(SLQPT0);
				\end{tikzpicture}
				\caption{Relationship}\label{fig:1}
			\end{figure}
			
			Now let us consider Problem (MF-SLQ)$_{l0}^{\bar{\Theta},\bar{\Delta}}$ whose optimality system is (\ref{problem(l0) optimal system-1}), (\ref{problem(l0) optimal system-2}) and the open-loop optimal control is $\bar{v}_2(\cdot)=0$. We set
			\begin{equation*}
				\begin{aligned}
					&X:=\left(\begin{matrix} \bar{x}_{l0}  \\ \bar{p}_{l0}  \end{matrix}\right),\quad
					Y:=\left(\begin{matrix} \bar{q}_{l0} \\ \bar{\eta}_{l0} \end{matrix}\right),\quad
					Z:=\left(\begin{matrix} \bar{k}_{l0} \\ \bar{\zeta}_{l0} \end{matrix}\right),\quad
					X_0:=\left(\begin{matrix} \xi  \\ 0  \end{matrix}\right),\\
					&\bar{\boldsymbol{\Theta}}_2:=\left(\begin{matrix} \bar{\Theta}_2 & 0 \end{matrix}\right),\quad
					\bar{\hat{\boldsymbol{\Theta}}}_2:=\left(\begin{matrix} \bar{\hat{\Theta}}_2 & 0 \end{matrix}\right),\quad
					\bar{\boldsymbol{\Delta}}_2:=\left(\begin{matrix} 0 & \bar{\Delta}_2 \end{matrix}\right),\quad
					\bar{\hat{\boldsymbol{\Delta}}}_2:=\left(\begin{matrix} 0 & \bar{\hat{\Delta}}_2 \end{matrix}\right),
				\end{aligned}
			\end{equation*}
			and
			\begin{equation*}
				\begin{aligned}
					&\tilde{\mathcal{A}}:=\left(\begin{matrix} \tilde{A} & 0 \\ \tilde{Q}_{12} & \tilde{A} \end{matrix}\right),\,\,\,
					\check{\mathcal{A}}:=\left(\begin{matrix} \check{A} & 0 \\ \check{Q}_{12} & \check{A} \end{matrix}\right),\,\,\,
					\tilde{\mathcal{M}}:=\left(\begin{matrix} 0 & \tilde{M} \\ \tilde{M}^\top & \tilde{Q}_{22} \end{matrix}\right),\,\,\,
					\check{\mathcal{M}}:=\left(\begin{matrix} 0 & \check{M} \\ \check{M}^\top & \check{Q}_{22} \end{matrix}\right),\\
					&\tilde{\mathcal{F}}:=\left(\begin{matrix} 0 & \tilde{F} \\ \tilde{F} & \tilde{Q}_{23}^\top \end{matrix}\right),\,\,\,
					\check{\mathcal{F}}:=\left(\begin{matrix} 0 & \check{F} \\ \check{F} & \check{Q}_{23}^\top \end{matrix}\right),\,\,\,
					\tilde{\mathcal{H}}:=\left(\begin{matrix} \tilde{Q}_{11} & 0  \\  0 & 0 \end{matrix}\right),\,\,\,
					\check{\mathcal{H}}:=\left(\begin{matrix} \check{Q}_{11} & 0  \\  0 & 0 \end{matrix}\right),\\
					&\tilde{\mathcal{C}}:=\left(\begin{matrix} \tilde{C} & 0  \\ \tilde{Q}_{13} & \tilde{C} \end{matrix}\right),\,\,\,
					\check{\mathcal{C}}:=\left(\begin{matrix} \check{C} & 0  \\ \check{Q}_{13} & \check{C} \end{matrix}\right),\,\,\,
					\tilde{\mathcal{K}}:=\left(\begin{matrix} 0 & \tilde{K}  \\ \tilde{K}^\top & \tilde{Q}_{33} \end{matrix}\right),\,\,\,
					\check{\mathcal{K}}:=\left(\begin{matrix} 0 & \check{K}  \\ \check{K}^\top & \check{Q}_{33} \end{matrix}\right),\\
					&\tilde{\mathcal{G}}:=\left(\begin{matrix} G^2 & 0  \\  0 & 0 \end{matrix}\right),\,\,\,
					\check{\mathcal{G}}:=\left(\begin{matrix} \hat{G}^2 & 0  \\  0 & 0 \end{matrix}\right),\,\,\,
					\tilde{\mathcal{B}}:=\left(\begin{matrix} \tilde{B} \\ \tilde{S}^\top_2 \end{matrix}\right),\,\,\,
					\check{\mathcal{B}}:=\left(\begin{matrix} \check{B} \\ \check{S}^\top_2 \end{matrix}\right),\,\,\,
					\tilde{\mathcal{D}}:=\left(\begin{matrix} \tilde{D} \\ \tilde{S}^\top_3 \end{matrix}\right),\,\,\,
					\check{\mathcal{D}}:=\left(\begin{matrix} \check{D} \\ \check{S}^\top_3 \end{matrix}\right),\\
					&\tilde{\mathcal{N}}:=\left(\begin{matrix} \tilde{S}^\top_1 \\ \tilde{N} \end{matrix}\right),\,\,\,
					\check{\mathcal{N}}:=\left(\begin{matrix}  \check{S}^\top_1 \\ \check{N} \end{matrix}\right),\,\,\,
					\tilde{\boldsymbol{b}}:=\left(\begin{matrix}\tilde{b}-\mathbb{E}\tilde{b} \\ \tilde{q}_2-\mathbb{E}\tilde{q}_2  \end{matrix}\right),\,\,\,
					\check{\boldsymbol{b}}:=\left(\begin{matrix} \mathbb{E}\tilde{b} \\ \check{q}_2 \end{matrix}\right),\,\,\,
					\tilde{\boldsymbol{\sigma}}:=\left(\begin{matrix}\tilde{\sigma}-\mathbb{E}\tilde{\sigma} \\ \tilde{q}_3-\mathbb{E}\tilde{q}_3  \end{matrix}\right),\\
					&\check{\boldsymbol{\sigma}}:=\left(\begin{matrix} \mathbb{E}\tilde{\sigma} \\ \check{q}_3 \end{matrix}\right),\,\,\,
					\tilde{\boldsymbol{f}}:=\left(\begin{matrix} \tilde{q}_1-\mathbb{E}\tilde{q}_1 \\ \tilde{f}-\mathbb{E}\tilde{f} \end{matrix}\right),\,\,\,
					\check{\boldsymbol{f}}:=\left(\begin{matrix}  \check{q}_1 \\ \mathbb{E}\tilde{f}  \end{matrix}\right),\,\,\,
					\tilde{\boldsymbol{g}}:=\left(\begin{matrix} g^2 \\ g^1  \end{matrix}\right),\,\,\,
					\check{\boldsymbol{g}}:=\left(\begin{matrix} \hat{g}^2 \\ \hat{g}^1  \end{matrix}\right).
				\end{aligned}
			\end{equation*}
			Then (\ref{problem(l0) optimal system-1}) is equivalent to the following MF-FBSDE:
			\begin{equation}\label{High dimension optimal system simple}
				\left\{\begin{aligned}
					dX(s)&=\big\{ \big(\tilde{\mathcal{A}}+\tilde{\mathcal{B}}\bar{\boldsymbol{\Theta}}_2+\bar{\boldsymbol{\Delta}}_2^\top\tilde{\mathcal{N}}^\top
					+\bar{\boldsymbol{\Delta}}_2^\top\tilde{R}\bar{\boldsymbol{\Theta}}_2\big)\big(X-\mathbb{E}X\big)+\big(\check{\mathcal{A}}+\check{\mathcal{B}}\bar{\hat{\boldsymbol{\Theta}}}_2
					+\bar{\hat{\boldsymbol{\Delta}}}_2^\top\check{\mathcal{N}}^\top\\
					&\qquad +\bar{\hat{\boldsymbol{\Delta}}}_2^\top\check{R}\bar{\hat{\boldsymbol{\Theta}}}_2\big)\mathbb{E}X+\big(\tilde{\mathcal{M}}+\tilde{\mathcal{B}}\bar{\boldsymbol{\Delta}}_2
					+\bar{\boldsymbol{\Delta}}^\top_2\tilde{\mathcal{B}}^\top+\bar{\boldsymbol{\Delta}}^\top_2\tilde{R}\bar{\boldsymbol{\Delta}}_2\big)\big(Y-\mathbb{E}Y\big)\\
					&\qquad +\big(\check{\mathcal{M}}+\check{\mathcal{B}}\bar{\hat{\boldsymbol{\Delta}}}_2+\bar{\hat{\boldsymbol{\Delta}}}^\top_2\check{\mathcal{B}}^\top
					+\bar{\hat{\boldsymbol{\Delta}}}^\top_2\check{R}\bar{\hat{\boldsymbol{\Delta}}}_2\big)\mathbb{E}Y\\
					&\qquad +\big(\tilde{\mathcal{F}}+\bar{\boldsymbol{\Delta}}^\top_2\tilde{\mathcal{D}}^\top\big)\big(Z-\mathbb{E}Z\big)+\big(\check{\mathcal{F}}
					+\bar{\hat{\boldsymbol{\Delta}}}^\top_2\check{\mathcal{D}}^\top\big)\mathbb{E}Z\big\} ds\\
					&\quad +\big\{ \big(\tilde{\mathcal{C}}+\tilde{\mathcal{D}}\bar{\boldsymbol{\Theta}}_2\big)\big(X-\mathbb{E}X\big)+\big(\check{\mathcal{C}}
					+\check{\mathcal{D}}\bar{\hat{\boldsymbol{\Theta}}}_2\big)\mathbb{E}X+\big(\tilde{\mathcal{F}}^\top+\tilde{\mathcal{D}}\bar{\boldsymbol{\Delta}}_2\big)\big(Y-\mathbb{E}Y\big)\\
					&\qquad +\big(\check{\mathcal{F}}^\top+\check{\mathcal{D}}\bar{\hat{\boldsymbol{\Delta}}}_2\big)\mathbb{E}Y+\tilde{\mathcal{K}}\big(Z-\mathbb{E}Z\big)+\check{\mathcal{K}}\mathbb{E}Z \big\}dW(s),\\
					-dY(s)&=\big\{ \big(\tilde{\mathcal{A}}^\top+\tilde{\mathcal{N}}\bar{\boldsymbol{\Delta}}_2\big)\big(Y-\mathbb{E}Y\big)+\big(\check{\mathcal{A}}^\top
					+\check{\mathcal{N}}\bar{\hat{\boldsymbol{\Delta}}}_2\big)\mathbb{E}Y+\tilde{\mathcal{C}}^\top\big(Z-\mathbb{E}Z\big)+\check{\mathcal{C}}^\top\mathbb{E}Z\\
					&\qquad+\big(\tilde{\mathcal{H}}+\tilde{\mathcal{N}}\bar{\boldsymbol{\Theta}}_2\big)\big(X-\mathbb{E}X\big)+\big(\check{\mathcal{H}}+\check{\mathcal{N}}\bar{\hat{\boldsymbol{\Theta}}}_2\big)\mathbb{E}X\big\} ds
					-ZdW(s),\quad s\in[t,T],\\
					X(t)&=X_0,\,\,\,Y(T)=\tilde{\mathcal{G}}\big(X(T)-\mathbb{E}X(T)\big)+\big(\tilde{\mathcal{G}}+\check{\mathcal{G}}\big)\mathbb{E}X(T),
				\end{aligned}\right.
			\end{equation}
			whose adapted solution is $(X(\cdot),Y(\cdot),Z(\cdot))\in L^2_{\mathbb{F}}(t,T;\mathbb{R}^{2n})\times L^2_{\mathbb{F}}(t,T;\mathbb{R}^{2n})\times L^2_{\mathbb{F}}(t,T;\mathbb{R}^{2n})$, with
			\begin{equation}\label{High dimension stationary}
				\begin{aligned}
					0&=\big(\tilde{\mathcal{N}}^\top+\tilde{R}\bar{\boldsymbol{\Theta}}_2\big)\big(X-\mathbb{E}X\big)+\big(\check{\mathcal{N}}^\top+\check{R}\bar{\hat{\boldsymbol{\Theta}}}_2\big)\mathbb{E}X
					+\big(\tilde{\mathcal{B}}^\top+\tilde{R}\bar{\boldsymbol{\Delta}}_2\big)\big(Y-\mathbb{E}Y\big)\\
					&\qquad +\big(\check{\mathcal{B}}^\top+\check{R}\bar{\hat{\boldsymbol{\Delta}}}_2\big)\mathbb{E}Y+\tilde{\mathcal{D}}^\top\big(Z-\mathbb{E}Z\big)+\check{\mathcal{D}}^\top\mathbb{E}Z,\quad a.e.,\, \mathbb{P}\mbox{-}a.s..
				\end{aligned}
			\end{equation}
			Notice that $Y(T)=\tilde{\mathcal{G}}\big(X(T)-\mathbb{E}X(T)\big)+\big(\tilde{\mathcal{G}}+\check{\mathcal{G}}\big)\mathbb{E}X(T)$, we assume that
			\begin{equation*}
				Y(\cdot)=P_2(\cdot)\big(X-\mathbb{E}X\big)(\cdot)+\Pi_2(\cdot)\mathbb{E}X(\cdot),
			\end{equation*}
			where $(P_2(\cdot),\Pi_2(\cdot)) \in C([t,T],\mathbb{R}^{2n \times 2n}) \times C([t,T],\mathbb{R}^{2n \times 2n})$, with $P_2(T)=\tilde{\mathcal{G}}$, $\Pi_2(T)=\tilde{\mathcal{G}}+\check{\mathcal{G}}$. Then we can obtain the following two relationships
			\begin{equation}\label{EY,Y-EY}
				\left\{\begin{aligned}
					\mathbb{E}Y(\cdot)&=\Pi_2(\cdot)\mathbb{E}X(\cdot),\\
					Y(\cdot)-\mathbb{E}Y(\cdot)&=P_2(\cdot)\big(X-\mathbb{E}X\big)(\cdot).
				\end{aligned}\right.
			\end{equation}
			Applying It\^o's formula to the second equation in (\ref{EY,Y-EY}), comparing coefficients of diffusion terms and assuming that $I-P_2\check{\mathcal{K}}$, $I-P_2\tilde{\mathcal{K}}$ are invertible, we can get
			\begin{equation}\label{EZ,Z-EZ}
				\hspace{-2mm}\left\{\begin{aligned}	
					\mathbb{E}Z(\cdot)&=\big(I-P_2\check{\mathcal{K}}\big)^{-1}\big[P_2\big(\check{\mathcal{C}}+\check{\mathcal{D}}\bar{\hat{\boldsymbol{\Theta}}}_2\big)+P_2\big(\check{\mathcal{F}}^\top
					+\check{\mathcal{D}}\bar{\hat{\boldsymbol{\Delta}}}_2\big)\Pi_2\big]\mathbb{E}X(\cdot),\\
					Z(\cdot)-\mathbb{E}Z(\cdot)&=\big(I-P_2\tilde{\mathcal{K}}\big)^{-1}\big[P_2\big(\tilde{\mathcal{C}}+\tilde{\mathcal{D}}\bar{\boldsymbol{\Theta}}_2\big)
					+P_2\big(\tilde{\mathcal{F}}^\top+\tilde{\mathcal{D}}\bar{\boldsymbol{\Delta}}_2\big)P_2\big]\big(X-\mathbb{E}X\big)(\cdot),\,\, \mathbb{P}\mbox{-}a.s..
				\end{aligned}\right.
			\end{equation}
			Then, comparing coefficients of drift terms, we can obtain
			\begin{equation}\label{P2(theta)}
				\begin{aligned}
					0&=\dot{P}_2+\tilde{\mathcal{A}}^\top P_2+P_2\tilde{\mathcal{A}}+\tilde{\mathcal{H}}+P_2\tilde{\mathcal{M}}P_2+\big(\tilde{\mathcal{C}}^\top
					+P_2\tilde{\mathcal{F}}\big)\big(I-P_2\tilde{\mathcal{K}}\big)^{-1}P_2\big(\tilde{\mathcal{C}}+\tilde{\mathcal{F}}^\top P_2\big)\\
					&\quad +\big[\tilde{\mathcal{N}}+P_2\tilde{\mathcal{B}}+\big(\tilde{\mathcal{C}}^\top+P_2\tilde{\mathcal{F}}\big)\big(I-P_2\tilde{\mathcal{K}}\big)^{-1}P_2\tilde{\mathcal{D}}\big]\big(\bar{\boldsymbol{\Theta}}_2
					+\bar{\boldsymbol{\Delta}}_2P_2\big)\\
					&\quad +P_2\bar{\boldsymbol{\Delta}}^\top_2\big\{\tilde{\mathcal{N}}^\top+\tilde{\mathcal{B}}^\top P_2+\big[\tilde{R}
					+\tilde{\mathcal{D}}^\top\big(I-P_2\tilde{\mathcal{K}}\big)^{-1}P_2\tilde{\mathcal{D}}\big]\big(\bar{\boldsymbol{\Theta}}_2+\bar{\boldsymbol{\Delta}}_2P_2\big)\\
					&\qquad\qquad\quad +\tilde{\mathcal{D}}^\top\big(I-P_2\tilde{\mathcal{K}}\big)^{-1}P_2\big(\tilde{\mathcal{C}}+\tilde{\mathcal{F}}^\top P_2\big)\big\}.
				\end{aligned}
			\end{equation}
			Similarly, applying It\^o's formula to the first equation in (\ref{EY,Y-EY}) and comparing coefficients of drift terms, we can obtain
			\begin{equation}\label{Pi2(theta)}
				\begin{aligned}
					0&=\dot{\Pi}_2+\check{\mathcal{A}}^\top \Pi_2+\Pi_2\check{\mathcal{A}}+\check{\mathcal{H}}+\Pi_2\check{\mathcal{M}}\Pi_2+\big(\check{\mathcal{C}}^\top
					+\Pi_2\check{\mathcal{F}}\big)\big(I-P_2\check{\mathcal{K}}\big)^{-1}P_2\big(\check{\mathcal{C}}+\check{\mathcal{F}}^\top\Pi_2\big)\\
					&\quad +\big[\check{\mathcal{N}}+\Pi_2\check{\mathcal{B}}+\big(\check{\mathcal{C}}^\top+\Pi_2\check{\mathcal{F}}\big)\big(I-P_2\check{\mathcal{K}}\big)^{-1}P_2\check{\mathcal{D}}\big]\big(\bar{\hat{\boldsymbol{\Theta}}}_2
					+\bar{\hat{\boldsymbol{\Delta}}}_2\Pi_2\big)+\Pi_2\bar{\hat{\boldsymbol{\Delta}}}^\top_2\big\{\check{\mathcal{N}}^\top+\check{\mathcal{B}}^\top \Pi_2\\
					&\quad +\big[\check{R}+\check{\mathcal{D}}^\top\big(I-P_2\check{\mathcal{K}}\big)^{-1}P_2\check{\mathcal{D}}\big]\big(\bar{\hat{\boldsymbol{\Theta}}}_2+\bar{\hat{\boldsymbol{\Delta}}}_2\Pi_2\big)
					+\check{\mathcal{D}}^\top\big(I-P_2\check{\mathcal{K}}\big)^{-1}P_2\big(\check{\mathcal{C}}+\check{\mathcal{F}}^\top\Pi_2\big)\big\}.
				\end{aligned}
			\end{equation}
			According to stationarity conditions in (\ref{High dimension stationary}) and noting (\ref{EY,Y-EY}), (\ref{EZ,Z-EZ}), we have
			\begin{equation*}
				\left\{\begin{aligned}
					0&=\tilde{\mathcal{N}}^\top+\tilde{\mathcal{B}}^\top P_2+\tilde{\mathcal{D}}^\top\big(I-P_2\tilde{\mathcal{K}}\big)^{-1}P_2\big(\tilde{\mathcal{C}}
					+\tilde{\mathcal{F}}^\top P_2\big)+\tilde{\Sigma}_2\big(\bar{\boldsymbol{\Theta}}_2+\bar{\boldsymbol{\Delta}}_2P_2\big),\\
					0&=\check{\mathcal{N}}^\top+\check{\mathcal{B}}^\top \Pi_2+\check{\mathcal{D}}^\top\big(I-P_2\check{\mathcal{K}}\big)^{-1}P_2\big(\check{\mathcal{C}}
					+\check{\mathcal{F}}^\top \Pi_2\big)+\check{\Sigma}_2\big(\bar{\hat{\boldsymbol{\Theta}}}_2+\bar{\hat{\boldsymbol{\Delta}}}_2\Pi_2\big),\quad a.e.,\, \mathbb{P}\mbox{-}a.s.,
				\end{aligned}\right.
			\end{equation*}
			where we denote the invertible
			\begin{equation}\label{Sigma2 and Sigma2}
				\tilde{\Sigma}_2\equiv\tilde{R}+\tilde{\mathcal{D}}^\top\big(I-P_2\tilde{\mathcal{K}}\big)^{-1}P_2\tilde{\mathcal{D}},\quad \check{\Sigma}_2\equiv\check{R}+\check{\mathcal{D}}^\top\big(I-P_2\check{\mathcal{K}}\big)^{-1}P_2\check{\mathcal{D}}.
			\end{equation}
			
			Let $\bar{\boldsymbol{\Theta}}\equiv\bar{\boldsymbol{\Theta}}_2+\bar{\boldsymbol{\Delta}}_2P_2$ and $\bar{\hat{\boldsymbol{\Theta}}}\equiv\bar{\hat{\boldsymbol{\Theta}}}_2+\bar{\hat{\boldsymbol{\Delta}}}_2\Pi_2$, then we have
			\begin{equation}\label{Theta,hat(Theta)}
				\left\{\begin{aligned}
					\bar{\boldsymbol{\Theta}}&=-\tilde{\Sigma}_2^{-1}\big[\tilde{\mathcal{N}}^\top+\tilde{\mathcal{B}}^\top P_2
					+\tilde{\mathcal{D}}^\top\big(I-P_2\tilde{\mathcal{K}}\big)^{-1}P_2\big(\tilde{\mathcal{C}}+\tilde{\mathcal{F}}^\top P_2\big)\big],\\
					\bar{\hat{\boldsymbol{\Theta}}}&=-\check{\Sigma}_2^{-1}\big[\check{\mathcal{N}}^\top
					+\check{\mathcal{B}}^\top \Pi_2+\check{\mathcal{D}}^\top\big(I-P_2\check{\mathcal{K}}\big)^{-1}P_2\big(\check{\mathcal{C}}+\check{\mathcal{F}}^\top \Pi_2\big)\big],\quad a.e.,\, \mathbb{P}\mbox{-}a.s..
				\end{aligned}\right.
			\end{equation}
			By substituting the above equation into (\ref{P2(theta)}) and (\ref{Pi2(theta)}), we obtain
			\begin{equation}\label{P2}
				\left\{\begin{aligned}
					&0=\dot{P}_2+\tilde{\mathcal{A}}^\top P_2+P_2\tilde{\mathcal{A}}+\tilde{\mathcal{H}}+P_2\tilde{\mathcal{M}}P_2+\big(\tilde{\mathcal{C}}^\top
					+P_2\tilde{\mathcal{F}}\big)\big(I-P_2\tilde{\mathcal{K}}\big)^{-1}P_2\big(\tilde{\mathcal{C}}+\tilde{\mathcal{F}}^\top P_2\big)\\
					&\qquad-\big[\tilde{\mathcal{N}}+P_2\tilde{\mathcal{B}}+\big(\tilde{\mathcal{C}}^\top+P_2\tilde{\mathcal{F}}\big)\big(I-P_2\tilde{\mathcal{K}}\big)^{-1}P_2\tilde{\mathcal{D}}\big]\\
					&\qquad\quad \times \tilde{\Sigma}_2^{-1}\big[\tilde{\mathcal{N}}^\top+\tilde{\mathcal{B}}^\top P_2
					+\tilde{\mathcal{D}}^\top\big(I-P_2\tilde{\mathcal{K}}\big)^{-1}P_2\big(\tilde{\mathcal{C}}+\tilde{\mathcal{F}}^\top P_2\big)\big],\quad s\in[t,T],\\
					&P_2(T)=\tilde{\mathcal{G}},\quad I-P_2\tilde{\mathcal{K}}>0,\quad \tilde{\Sigma}_2\equiv\tilde{R}+\tilde{\mathcal{D}}^\top\big(I-P_2\tilde{\mathcal{K}}\big)^{-1}P_2\tilde{\mathcal{D}}>0,
				\end{aligned}\right.
			\end{equation}
			and
			\begin{equation}\label{Pi2}
				\left\{\begin{aligned}
					&0=\dot{\Pi}_2+\check{\mathcal{A}}^\top \Pi_2+\Pi_2\check{\mathcal{A}}+\check{\mathcal{H}}+\Pi_2\check{\mathcal{M}}\Pi_2+\big(\check{\mathcal{C}}^\top
					+\Pi_2\check{\mathcal{F}}\big)\big(I-P_2\check{\mathcal{K}}\big)^{-1}P_2\big(\check{\mathcal{C}}+\check{\mathcal{F}}^\top\Pi_2\big)\\
					&\qquad-\big[\check{\mathcal{N}}+\Pi_2\check{\mathcal{B}}+\big(\check{\mathcal{C}}^\top+\Pi_2\check{\mathcal{F}}\big)\big(I-P_2\check{\mathcal{K}}\big)^{-1}P_2\check{\mathcal{D}}\big]\\
					&\qquad\quad \times \check{\Sigma}_2^{-1}\big[\check{\mathcal{N}}^\top+\check{\mathcal{B}}^\top \Pi_2+\check{\mathcal{D}}^\top\big(I-P_2\check{\mathcal{K}}\big)^{-1}P_2\big(\check{\mathcal{C}}
					+\check{\mathcal{F}}^\top \Pi_2\big)\big],\quad s\in[t,T],\\
					&\Pi_2(T)=\tilde{\mathcal{G}}+\check{\mathcal{G}},\quad I-P_2\check{\mathcal{K}}>0,\quad \check{\Sigma}_2\equiv\check{R}+\check{\mathcal{D}}^\top\big(I-P_2\check{\mathcal{K}}\big)^{-1}P_2\check{\mathcal{D}}>0.
				\end{aligned}\right.
			\end{equation}
			
			\begin{Remark}
				Riccati equations (\ref{P2}) and (\ref{Pi2}) are the same as (45) and (46) in Lin et al. \cite{LJZ2019}, respectively. There is no results about the solvability of them in their paper, since the general solvability is very difficult due to the appearance of terms like $[\tilde{R}+\tilde{\mathcal{D}}^\top(I-P_2\tilde{\mathcal{K}})^{-1}P_2\tilde{\mathcal{D}}]^{-1}$ and $[\check{R}+\check{\mathcal{D}}^\top(I-P_2\check{\mathcal{K}})^{-1}P_2\check{\mathcal{D}}]^{-1}$. However, in the end of this section of this paper, their solvability is studied under some coefficient assumptions in a special case.
			\end{Remark}
			
			Now, we back to Problem (MF-SLQ)$_l$ whose optimality system is (\ref{leader optimal system}) and we let
			\begin{equation*}
				\begin{aligned}
					&\bar{X}:=\left(\begin{matrix} \bar{x}  \\ \bar{p}  \end{matrix}\right),\,\,\,
					\bar{Y}:=\left(\begin{matrix} \bar{q} \\ \bar{\eta}_1 \end{matrix}\right),\,\,\,
					\bar{Z}:=\left(\begin{matrix} \bar{k} \\ \bar{\zeta}_1 \end{matrix}\right),\,\,\,
					\bar{X}_0:=\left(\begin{matrix} \xi  \\ 0  \end{matrix}\right).
				\end{aligned}
			\end{equation*}
			Thus, we have
			\begin{equation}\label{bar(X,Y,Z)}
				\left\{\begin{aligned}
					d\bar{X}(s)&=\big\{ \big(\tilde{\mathcal{A}}+\tilde{\mathcal{B}}\bar{\boldsymbol{\Theta}}_2+\bar{\boldsymbol{\Delta}}_2^\top\tilde{\mathcal{N}}^\top
					+\bar{\boldsymbol{\Delta}}_2^\top\tilde{R}\bar{\boldsymbol{\Theta}}_2\big)\big(\bar{X}-\mathbb{E}\bar{X}\big)+\big(\check{\mathcal{A}}+\check{\mathcal{B}}\bar{\hat{\boldsymbol{\Theta}}}_2
					+\bar{\hat{\boldsymbol{\Delta}}}_2^\top\check{\mathcal{N}}^\top\\
					&\qquad +\bar{\hat{\boldsymbol{\Delta}}}_2^\top\check{R}\bar{\hat{\boldsymbol{\Theta}}}_2\big)\mathbb{E}\bar{X}+\big(\tilde{\mathcal{M}}
					+\tilde{\mathcal{B}}\bar{\boldsymbol{\Delta}}_2+\bar{\boldsymbol{\Delta}}^\top_2\tilde{\mathcal{B}}^\top
					+\bar{\boldsymbol{\Delta}}^\top_2\tilde{R}\bar{\boldsymbol{\Delta}}_2\big)\big(\bar{Y}-\mathbb{E}\bar{Y}\big)\\
					&\qquad +\big(\check{\mathcal{M}}+\check{\mathcal{B}}\bar{\hat{\boldsymbol{\Delta}}}_2
					+\bar{\hat{\boldsymbol{\Delta}}}^\top_2\check{\mathcal{B}}^\top+\bar{\hat{\boldsymbol{\Delta}}}^\top_2\check{R}\bar{\hat{\boldsymbol{\Delta}}}_2\big)\mathbb{E}\bar{Y}
					+\big(\tilde{\mathcal{F}}+\bar{\boldsymbol{\Delta}}^\top_2\tilde{\mathcal{D}}^\top\big)\big(\bar{Z}-\mathbb{E}\bar{Z}\big)\\
					&\qquad +\big(\check{\mathcal{F}}+\bar{\hat{\boldsymbol{\Delta}}}^\top_2\check{\mathcal{D}}^\top\big)\mathbb{E}\bar{Z}
					+\big(\tilde{\mathcal{B}}+\bar{\boldsymbol{\Delta}}^\top_2\tilde{R}\big)\big(\bar{v}_2-\mathbb{E}\bar{v}_2\big)+\big(\check{\mathcal{B}}+\bar{\hat{\boldsymbol{\Delta}}}^\top_2\check{R}\big)\mathbb{E}\bar{v}_2\\
					&\qquad +\tilde{\boldsymbol{b}}+\check{\boldsymbol{b}}
					+\bar{\boldsymbol{\Delta}}^\top_2\big(\tilde{\rho}-\mathbb{E}\tilde{\rho}\big)+\bar{\hat{\boldsymbol{\Delta}}}^\top_2\check{\rho}\big\}ds
					+\big\{ \big(\tilde{\mathcal{C}}+\tilde{\mathcal{D}}\bar{\boldsymbol{\Theta}}_2\big)\big(\bar{X}-\mathbb{E}\bar{X}\big)\\
					&\qquad +\big(\check{\mathcal{C}}+\check{\mathcal{D}}\bar{\hat{\boldsymbol{\Theta}}}_2\big)\mathbb{E}\bar{X}
					+\big(\tilde{\mathcal{F}}^\top+\tilde{\mathcal{D}}\bar{\boldsymbol{\Delta}}_2\big)\big(\bar{Y}-\mathbb{E}\bar{Y}\big)+\big(\check{\mathcal{F}}^\top+\check{\mathcal{D}}\bar{\hat{\boldsymbol{\Delta}}}_2\big)\mathbb{E}\bar{Y}\\
					&\qquad +\tilde{\mathcal{K}}\big(\bar{Z}-\mathbb{E}\bar{Z}\big)+\check{\mathcal{K}}\mathbb{E}\bar{Z}+\tilde{\mathcal{D}}\big(\bar{v}_2-\mathbb{E}\bar{v}_2\big)+\check{\mathcal{D}}\mathbb{E}\bar{v}_2
					+\tilde{\boldsymbol{\sigma}}+\check{\boldsymbol{\sigma}}\big\} dW(s),\\
					-d\bar{Y}(s)&=\big\{ \big(\tilde{\mathcal{A}}^\top+\tilde{\mathcal{N}}\bar{\boldsymbol{\Delta}}_2\big)\big(\bar{Y}-\mathbb{E}\bar{Y}\big)+\big(\check{\mathcal{A}}^\top
					+\check{\mathcal{N}}\bar{\hat{\boldsymbol{\Delta}}}_2\big)\mathbb{E}\bar{Y}+\tilde{\mathcal{C}}^\top\big(\bar{Z}-\mathbb{E}\bar{Z}\big)+\check{\mathcal{C}}^\top\mathbb{E}\bar{Z}\\
					&\qquad +\big(\tilde{\mathcal{H}}+\tilde{\mathcal{N}}\bar{\boldsymbol{\Theta}}_2\big)\big(\bar{X}-\mathbb{E}\bar{X}\big)+\big(\check{\mathcal{H}}
					+\check{\mathcal{N}}\bar{\hat{\boldsymbol{\Theta}}}_2\big)\mathbb{E}\bar{X}+\tilde{\mathcal{N}}\big(\bar{v}_2-\mathbb{E}\bar{v}_2\big)+\check{\mathcal{N}}\mathbb{E}\bar{v}_2\\
					&\qquad +\tilde{\boldsymbol{f}}+\check{\boldsymbol{f}}\big\} ds-\bar{Z}dW(s),\quad s\in[t,T], \\\
					\bar{X}(t)&=\bar{X}_0,\,\,\,\bar{Y}(T)=\tilde{\mathcal{G}}\big(\bar{X}(T)-\mathbb{E}\bar{X}(T)\big)+\big(\tilde{\mathcal{G}}+\check{\mathcal{G}}\big)\mathbb{E}\bar{X}(T)+\tilde{\boldsymbol{g}}+\check{\boldsymbol{g}},\\
				\end{aligned}\right.
			\end{equation}
			with
			\begin{equation}\label{bar(X,Y,Z)---}
				\begin{aligned}		
					&0=\big(\tilde{\mathcal{N}}^\top+\tilde{R}\bar{\boldsymbol{\Theta}}_2\big)\big(\bar{X}-\mathbb{E}\bar{X}\big)+\big(\check{\mathcal{N}}^\top
					+\check{R}\bar{\hat{\boldsymbol{\Theta}}}_2\big)\mathbb{E}\bar{X}+\big(\tilde{\mathcal{B}}^\top+\tilde{R}\bar{\boldsymbol{\Delta}}_2\big)\big(\bar{Y}-\mathbb{E}\bar{Y}\big)\\
					&\qquad +\big(\check{\mathcal{B}}^\top+\check{R}\bar{\hat{\boldsymbol{\Delta}}}_2\big)\mathbb{E}\bar{Y}+\tilde{\mathcal{D}}^\top\big(\bar{Z}-\mathbb{E}\bar{Z}\big)+\check{\mathcal{D}}^\top\mathbb{E}\bar{Z}\\
					&\qquad +\tilde{R}\big(\bar{v}_2-\mathbb{E}\bar{v}_2\big)+\check{R}\mathbb{E}\bar{v}_2
					+\tilde{\rho}-\mathbb{E}\tilde{\rho}+\check{\rho},\quad a.e.,\, \mathbb{P}\mbox{-}a.s..
				\end{aligned}
			\end{equation}
			
			To determine $\bar{v}_2(\cdot)$, we define
			\begin{equation*}
				\left\{\begin{aligned}
					\bar{\eta}_2&=\bar{Y}-P_2\big(\bar{X}-\mathbb{E}\bar{X}\big)-\Pi_2\mathbb{E}\bar{X},\\
					\bar{\zeta}_2&=\bar{Z}-P_2\big[\big(\tilde{\mathcal{C}}+\tilde{\mathcal{D}}\bar{\boldsymbol{\Theta}}_2\big)\big(\bar{X}-\mathbb{E}\bar{X}\big)+\big(\check{\mathcal{C}}
					+\check{\mathcal{D}}\bar{\hat{\boldsymbol{\Theta}}}_2\big)\mathbb{E}\bar{X}+\big(\tilde{\mathcal{F}}^\top+\tilde{\mathcal{D}}\bar{\boldsymbol{\Delta}}_2\big)\big(\bar{Y}-\mathbb{E}\bar{Y}\big)\\
					&\quad +\big(\check{\mathcal{F}}^\top+\check{\mathcal{D}}\bar{\hat{\boldsymbol{\Delta}}}_2\big)\mathbb{E}\bar{Y}+\tilde{\mathcal{K}}\big(\bar{Z}-\mathbb{E}\bar{Z}\big)
					+\check{\mathcal{K}}\mathbb{E}\bar{Z}+\tilde{\mathcal{D}}\big(\bar{v}_2-\mathbb{E}\bar{v}_2\big)+\check{\mathcal{D}}\mathbb{E}\bar{v}_2+\tilde{\boldsymbol{\sigma}}+\check{\boldsymbol{\sigma}}\big],
				\end{aligned}\right.
			\end{equation*}
			with $\bar{\eta}_2(T)=\tilde{\boldsymbol{g}}+\check{\boldsymbol{g}}$. Then
			\begin{equation}\label{Eeta}
				\left\{\begin{aligned}
					\mathbb{E}\bar{Y}&=\mathbb{E}\bar{\eta}_2+\Pi_2\mathbb{E}\bar{X},\\
					\mathbb{E}\bar{Z}&=\big(I-P_2\check{\mathcal{K}}\big)^{-1}\big\{\mathbb{E}\bar{\zeta}_2+P_2\big[\big(\check{\mathcal{C}}
					+\check{\mathcal{D}}\bar{\hat{\boldsymbol{\Theta}}}_2\big)\mathbb{E}\bar{X}+\big(\check{\mathcal{F}}^\top+\check{\mathcal{D}}\bar{\hat{\boldsymbol{\Delta}}}_2\big)\mathbb{E}\bar{Y}
					+\check{\mathcal{D}}\mathbb{E}\bar{v}_2+\check{\boldsymbol{\sigma}}\big]\big\},
				\end{aligned}\right.
			\end{equation}
			and
			\begin{equation}\label{eta-Eeta}
				\left\{\begin{aligned}
					\bar{Y}-\mathbb{E}\bar{Y}&=\bar{\eta}_2-\mathbb{E}\bar{\eta}_2+P_2\big(\bar{X}-\mathbb{E}\bar{X}\big),\\
					\bar{Z}-\mathbb{E}\bar{Z}&=\big(I-P_2\tilde{\mathcal{K}}\big)^{-1}\big\{\bar{\zeta}_2-\mathbb{E}\bar{\zeta}_2+P_2\big[\big(\tilde{\mathcal{C}}
					+\tilde{\mathcal{D}}\bar{\boldsymbol{\Theta}}_2\big)\big(\bar{X}-\mathbb{E}\bar{X}\big)\\
					&\qquad+\big(\tilde{\mathcal{F}}^\top+\tilde{\mathcal{D}}\bar{\boldsymbol{\Delta}}_2\big)\big(\bar{Y}-\mathbb{E}\bar{Y}\big)
					+\tilde{\mathcal{D}}\big(\bar{v}_2-\mathbb{E}\bar{v}_2\big)+\tilde{\boldsymbol{\sigma}}\big]\big\}.
				\end{aligned}\right.
			\end{equation}
			Consequently,
			\begin{equation}\label{eta2}
				\begin{aligned}
					&-d\bar{\eta}_2(s)=\Big\{\Big[\dot{P}_2+\tilde{\mathcal{A}}^\top P_2+P_2\tilde{\mathcal{A}}+\tilde{\mathcal{H}}+P_2\tilde{\mathcal{M}}P_2+\big(\tilde{\mathcal{C}}^\top
					+P_2\tilde{\mathcal{F}}\big)\big(I-P_2\tilde{\mathcal{K}}\big)^{-1}P_2\big(\tilde{\mathcal{C}}+\tilde{\mathcal{F}}^\top P_2\big)\\
					&\quad +\big[\tilde{\mathcal{N}}+P_2\tilde{\mathcal{B}}+\big(\tilde{\mathcal{C}}^\top+P_2\tilde{\mathcal{F}}\big)\big(I-P_2\tilde{\mathcal{K}}\big)^{-1}P_2\tilde{\mathcal{D}}\big]\big(\bar{\boldsymbol{\Theta}}_2
					+\bar{\boldsymbol{\Delta}}_2P_2\big)+P_2\bar{\boldsymbol{\Delta}}^\top_2\big\{\tilde{\mathcal{N}}^\top+\tilde{\mathcal{B}}^\top P_2\\
					&\quad +\big[\tilde{R}+\tilde{\mathcal{D}}^\top\big(I-P_2\tilde{\mathcal{K}}\big)^{-1}P_2\tilde{\mathcal{D}}\big]\big(\bar{\boldsymbol{\Theta}}_2+\bar{\boldsymbol{\Delta}}_2P_2\big)
					+\tilde{\mathcal{D}}^\top\big(I-P_2\tilde{\mathcal{K}}\big)^{-1}P_2\big(\tilde{\mathcal{C}}+\tilde{\mathcal{F}}^\top P_2\big)\big\}\Big]\big(\bar{X}-\mathbb{E}\bar{X}\big)\\
					&\quad +\Big[\dot{\Pi}_2+\check{\mathcal{A}}^\top \Pi_2+\Pi_2\check{\mathcal{A}}+\check{\mathcal{H}}+\Pi_2\check{\mathcal{M}}\Pi_2+\big(\check{\mathcal{C}}^\top
					+\Pi_2\check{\mathcal{F}}\big)\big(I-P_2\check{\mathcal{K}}\big)^{-1}P_2\big(\check{\mathcal{C}}+\check{\mathcal{F}}^\top\Pi_2\big)\\
					&\quad +\big[\check{\mathcal{N}}+\Pi_2\check{\mathcal{B}}+\big(\check{\mathcal{C}}^\top+\Pi_2\check{\mathcal{F}}\big)
					\big(I-P_2\check{\mathcal{K}}\big)^{-1}P_2\check{\mathcal{D}}\big]\big(\bar{\hat{\boldsymbol{\Theta}}}_2+\bar{\hat{\boldsymbol{\Delta}}}_2\Pi_2\big)
					+\Pi_2\bar{\hat{\boldsymbol{\Delta}}}^\top_2\big\{\check{\mathcal{N}}^\top+\check{\mathcal{B}}^\top \Pi_2\\
					&\quad +\big[\check{R}+\check{\mathcal{D}}^\top\big(I-P_2\check{\mathcal{K}}\big)^{-1}P_2\check{\mathcal{D}}\big]\big(\bar{\hat{\boldsymbol{\Theta}}}_2+\bar{\hat{\boldsymbol{\Delta}}}_2\Pi_2\big)
					+\check{\mathcal{D}}^\top\big(I-P_2\check{\mathcal{K}}\big)^{-1}P_2\big(\check{\mathcal{C}}+\check{\mathcal{F}}^\top\Pi_2\big)\big\}\Big]\mathbb{E}\bar{X}\\
					&\quad +\big[\tilde{\mathcal{A}}^\top+\big(\tilde{\mathcal{C}}^\top+P_2\tilde{\mathcal{F}}\big)\big(I-P_2\tilde{\mathcal{K}}\big)^{-1}P_2\tilde{\mathcal{F}}^\top
					+P_2\tilde{\mathcal{M}}\big]\big(\bar{\eta}_2-\mathbb{E}\bar{\eta}_2\big)\\
					&\quad +\big[\check{\mathcal{A}}^\top+\big(\check{\mathcal{C}}^\top+\Pi_2\check{\mathcal{F}}\big)\big(I-P_2\check{\mathcal{K}}\big)^{-1}P_2\check{\mathcal{F}}^\top+\Pi_2\check{\mathcal{M}}\big]\mathbb{E}\bar{\eta}_2\\
					&\quad +\big(\tilde{\mathcal{C}}^\top+P_2\tilde{\mathcal{F}}\big)\big(I-P_2\tilde{\mathcal{K}}\big)^{-1}\big(\bar{\zeta}_2-\mathbb{E}\bar{\zeta}_2\big)
					+\big(\check{\mathcal{C}}^\top+\Pi_2\check{\mathcal{F}}\big)\big(I-P_2\check{\mathcal{K}}\big)^{-1}\mathbb{E}\bar{\zeta}_2\\
					&\quad +\big[\tilde{\mathcal{N}}+\big(\tilde{\mathcal{C}}^\top+P_2\tilde{\mathcal{F}}\big)\big(I-P_2\tilde{\mathcal{K}}\big)^{-1}P_2\tilde{\mathcal{D}}
					+P_2\tilde{\mathcal{B}}\big]\big[\bar{\boldsymbol{\Delta}}_2\big(\bar{\eta}_2-\mathbb{E}\bar{\eta}_2\big)+\bar{v}_2-\mathbb{E}\bar{v}_2\big]\\
					&\quad +\big[\check{\mathcal{N}}+\big(\check{\mathcal{C}}^\top+\Pi_2\check{\mathcal{F}}\big)\big(I-P_2\check{\mathcal{K}}\big)^{-1}P_2\check{\mathcal{D}}
					+\Pi_2\check{\mathcal{B}}\big]\big(\bar{\hat{\boldsymbol{\Delta}}}_2\mathbb{E}\bar{\eta}_2+\mathbb{E}\bar{v}_2\big)+P_2\tilde{\boldsymbol{b}}+\Pi_2\check{\boldsymbol{b}}\\
					&\quad +\tilde{\boldsymbol{f}}+\check{\boldsymbol{f}}+\big(\tilde{\mathcal{C}}^\top
					+P_2\tilde{\mathcal{F}}\big)\big(I-P_2\tilde{\mathcal{K}}\big)^{-1}P_2\tilde{\boldsymbol{\sigma}}
					+\big(\check{\mathcal{C}}^\top+\Pi_2\check{\mathcal{F}}\big)\big(I-P_2\check{\mathcal{K}}\big)^{-1}P_2\check{\boldsymbol{\sigma}}\\
					&\quad +P_2\bar{\boldsymbol{\Delta}}^\top_2\Big[ \big[\tilde{\mathcal{B}}^\top+\tilde{\mathcal{D}}^\top\big(I-P_2\tilde{\mathcal{K}}\big)^{-1}P_2\tilde{\mathcal{F}}^\top\big]\big(\bar{\eta}_2
					-\mathbb{E}\bar{\eta}_2\big)+\tilde{\mathcal{D}}^\top\big(I-P_2\tilde{\mathcal{K}}\big)^{-1}\big(\bar{\zeta}_2-\mathbb{E}\bar{\zeta}_2\big)\\
					&\quad +\big[\tilde{R}+\tilde{\mathcal{D}}^\top\big(I-P_2\tilde{\mathcal{K}}\big)^{-1}P_2\tilde{\mathcal{D}}\big]\big[\bar{\boldsymbol{\Delta}}_2\big(\bar{\eta}_2
					-\mathbb{E}\bar{\eta}_2\big)+\bar{v}_2-\mathbb{E}\bar{v}_2\big]+\tilde{\rho}-\mathbb{E}\tilde{\rho}+\tilde{\mathcal{D}}^\top\big(I-P_2\tilde{\mathcal{K}}\big)^{-1}P_2\tilde{\boldsymbol{\sigma}}\Big]\\
					&\quad +\Pi_2\bar{\hat{\boldsymbol{\Delta}}}^\top_2\Big[\big[\check{\mathcal{B}}^\top+\check{\mathcal{D}}^\top\big(I-P_2\check{\mathcal{K}}\big)^{-1}
					P_2\check{\mathcal{F}}^\top\big]\mathbb{E}\bar{\eta}_2+\check{\mathcal{D}}^\top\big(I-P_2\check{\mathcal{K}}\big)^{-1}\mathbb{E}\bar{\zeta}_2\\
					&\quad +\big[\check{R}+\check{\mathcal{D}}^\top\big(I-P_2\check{\mathcal{K}}\big)^{-1}P_2\check{\mathcal{D}}\big]\big[\bar{\hat{\boldsymbol{\Delta}}}_2\mathbb{E}\bar{\eta}_2
					+\mathbb{E}\bar{v}_2\big]+\check{\rho}+\check{\mathcal{D}}^\top\big(I-P_2\check{\mathcal{K}}\big)^{-1}P_2\check{\boldsymbol{\sigma}}\Big] \Big\}ds-\bar{\zeta}_2dW(s).
				\end{aligned}
			\end{equation}
			According to the stationarity condition (\ref{bar(X,Y,Z)---}) and noting (\ref{Sigma2 and Sigma2}), (\ref{Theta,hat(Theta)}), (\ref{Eeta}), (\ref{eta-Eeta}), we have
			\begin{equation*}
				\left\{\begin{aligned}
					0&=\big[\check{\mathcal{B}}^\top+\check{\mathcal{D}}^\top\big(I-P_2\check{\mathcal{K}}\big)^{-1}P_2\check{\mathcal{F}}^\top\big]\mathbb{E}\bar{\eta}_2
					+\check{\mathcal{D}}^\top\big(I-P_2\check{\mathcal{K}}\big)^{-1}\mathbb{E}\bar{\zeta}_2\\
					&\quad +\big[\check{R}+\check{\mathcal{D}}^\top\big(I-P_2\check{\mathcal{K}}\big)^{-1}P_2\check{\mathcal{D}}\big]\big(\bar{\hat{\boldsymbol{\Delta}}}_2\mathbb{E}\bar{\eta}_2
					+\mathbb{E}\bar{v}_2\big)+\check{\mathcal{D}}^\top\big(I-P_2\check{\mathcal{K}}\big)^{-1}P_2\check{\boldsymbol{\sigma}}+\check{\rho},\quad a.e.,\\
					0&=\big[\tilde{\mathcal{B}}^\top+\tilde{\mathcal{D}}^\top\big(I-P_2\tilde{\mathcal{K}}\big)^{-1}P_2\tilde{\mathcal{F}}^\top\big]\big(\bar{\eta}_2-\mathbb{E}\bar{\eta}_2\big)
					+\tilde{\mathcal{D}}^\top\big(I-P_2\tilde{\mathcal{K}}\big)^{-1}\big(\bar{\zeta}_2-\mathbb{E}\bar{\zeta}_2\big)+\tilde{\rho}-\mathbb{E}\tilde{\rho}+\big[\tilde{R}\\
					&\quad +\tilde{\mathcal{D}}^\top\big(I-P_2\tilde{\mathcal{K}}\big)^{-1}P_2\tilde{\mathcal{D}}\big]\big[\bar{\boldsymbol{\Delta}}_2\big(\bar{\eta}_2-\mathbb{E}\bar{\eta}_2\big)
					+\bar{v}_2-\mathbb{E}\bar{v}_2\big]+\tilde{\mathcal{D}}^\top\big(I-P_2\tilde{\mathcal{K}}\big)^{-1}P_2\tilde{\boldsymbol{\sigma}}, \quad a.e.,\,\, \mathbb{P}\mbox{-}a.s..
				\end{aligned}\right.
			\end{equation*}
			Denoting
			\begin{equation*}
				\tilde{\mathbb{V}}_2\equiv\bar{\boldsymbol{\Delta}}_2\big(\bar{\eta}_2-\mathbb{E}\bar{\eta}_2\big)+\bar{v}_2-\mathbb{E}\bar{v}_2,\quad
				\check{\mathbb{V}}_2\equiv\bar{\hat{\boldsymbol{\Delta}}}_2\mathbb{E}\bar{\eta}_2+\mathbb{E}\bar{v}_2,
			\end{equation*}
			we have
			\begin{equation}\label{tildeV2,checkV2}
				\left\{\begin{aligned}		
					\tilde{\mathbb{V}}_2&=-\tilde{\Sigma}_2^{-1}\big\{\big[\tilde{\mathcal{B}}^\top+\tilde{\mathcal{D}}^\top
					\big(I-P_2\tilde{\mathcal{K}}\big)^{-1}P_2\tilde{\mathcal{F}}^\top\big]\big(\bar{\eta}_2-\mathbb{E}\bar{\eta}_2\big)\\
					&\qquad\qquad +\tilde{\mathcal{D}}^\top\big(I-P_2\tilde{\mathcal{K}}\big)^{-1}\big(\bar{\zeta}_2-\mathbb{E}\bar{\zeta}_2\big)
					+\tilde{\rho}-\mathbb{E}\tilde{\rho}+\tilde{\mathcal{D}}^\top\big(I-P_2\tilde{\mathcal{K}}\big)^{-1}P_2\tilde{\boldsymbol{\sigma}}\big\},\\
					\check{\mathbb{V}}_2&=-\check{\Sigma}_2^{-1}\big\{\big[\check{\mathcal{B}}^\top+\check{\mathcal{D}}^\top\big(I-P_2\check{\mathcal{K}}\big)^{-1}P_2\check{\mathcal{F}}^\top\big]\mathbb{E}\bar{\eta}_2\\
					&\qquad\qquad +\check{\mathcal{D}}^\top\big(I-P_2\check{\mathcal{K}}\big)^{-1}\mathbb{E}\bar{\zeta}_2+\check{\mathcal{D}}^\top\big(I-P_2\check{\mathcal{K}}\big)^{-1}P_2\check{\boldsymbol{\sigma}}+\check{\rho}\big\}.
				\end{aligned}\right.
			\end{equation}
			Inserting the above into (\ref{eta2}), we achieve
			\begin{equation}\label{eta2,zeta2}
				\left\{\begin{aligned}
					-d\bar{\eta}_2(s)&=\big\{\big[\tilde{\mathcal{A}}^\top+\big(\tilde{\mathcal{C}}^\top+P_2\tilde{\mathcal{F}}\big)\big(I-P_2\tilde{\mathcal{K}}\big)^{-1}P_2\tilde{\mathcal{F}}^\top
					+P_2\tilde{\mathcal{M}}\big]\big(\bar{\eta}_2-\mathbb{E}\bar{\eta}_2\big)\\
					&\quad\ +\big[\check{\mathcal{A}}^\top+\big(\check{\mathcal{C}}^\top+\Pi_2\check{\mathcal{F}}\big)\big(I-P_2\check{\mathcal{K}}\big)^{-1}P_2\check{\mathcal{F}}^\top+\Pi_2\check{\mathcal{M}}\big]\mathbb{E}\bar{\eta}_2\\
					&\quad\ +\big(\tilde{\mathcal{C}}^\top+P_2\tilde{\mathcal{F}}\big)\big(I-P_2\tilde{\mathcal{K}}\big)^{-1}\big(\bar{\zeta}_2-\mathbb{E}\bar{\zeta}_2\big)
					+\big(\check{\mathcal{C}}^\top+\Pi_2\check{\mathcal{F}}\big)\big(I-P_2\check{\mathcal{K}}\big)^{-1}\mathbb{E}\bar{\zeta}_2\\
					&\quad\ +\big[\tilde{\mathcal{N}}+\big(\tilde{\mathcal{C}}^\top+P_2\tilde{\mathcal{F}}\big)\big(I-P_2\tilde{\mathcal{K}}\big)^{-1}P_2\tilde{\mathcal{D}}
					+P_2\tilde{\mathcal{B}}\big]\tilde{\mathbb{V}}_2+P_2\tilde{\boldsymbol{b}}+\Pi_2\check{\boldsymbol{b}}\\
					&\quad\ +\big[\check{\mathcal{N}}+\big(\check{\mathcal{C}}^\top+\Pi_2\check{\mathcal{F}}\big)\big(I-P_2\check{\mathcal{K}}\big)^{-1}P_2\check{\mathcal{D}}
					+\Pi_2\check{\mathcal{B}}\big]\check{\mathbb{V}}_2+\tilde{\boldsymbol{f}}+\check{\boldsymbol{f}}+\big(\tilde{\mathcal{C}}^\top\\
					&\quad\ +P_2\tilde{\mathcal{F}}\big)\big(I-P_2\tilde{\mathcal{K}}\big)^{-1}P_2\tilde{\boldsymbol{\sigma}}
					+\big(\check{\mathcal{C}}^\top+\Pi_2\check{\mathcal{F}}\big)\big(I-P_2\check{\mathcal{K}}\big)^{-1}P_2\check{\boldsymbol{\sigma}}\big\}ds-\bar{\zeta}_2dW(s),\\
					\bar{\eta}_2(T)&=\tilde{\boldsymbol{g}}+\check{\boldsymbol{g}}.
				\end{aligned}\right.
			\end{equation}
			
			It is worth to point out that, if we consider the outcome of the closed-loop strategy in (\ref{leader closedloop system})-(\ref{lclf}): $u_2=\Theta_2\big(\check{x}-\mathbb{E}\check{x}\big)+\hat{\Theta}_2\mathbb{E}\check{x}+\Delta_2\big(\check{\eta}_1-\mathbb{E}\check{\eta}_1\big)+\hat{\Delta}_2\mathbb{E}\check{\eta}_1+v_2$, it is anticipating. Therefore, inspired by Yong \cite{Yong2002}, we consider a non-anticipating closed-loop optimal control instead:
			\begin{equation*}
				\begin{aligned}
					\bar{u}_2&=\bar{\Theta}_2\big(\bar{x}-\mathbb{E}\bar{x}\big)+\bar{\hat{\Theta}}_2\mathbb{E}\bar{x}+\bar{\Delta}_2\big(\bar{\eta}_1-\mathbb{E}\bar{\eta}_1\big)+\bar{\hat{\Delta}}_2\mathbb{E}\bar{\eta}_1+\bar{v}_2\\
					&=\bar{\boldsymbol{\Theta}}_2\big(\bar{X}-\mathbb{E}\bar{X}\big)+\bar{\hat{\boldsymbol{\Theta}}}_2\mathbb{E}\bar{X}+\bar{\boldsymbol{\Delta}}_2\big(\bar{Y}-\mathbb{E}\bar{Y}\big)
					+\bar{\hat{\boldsymbol{\Delta}}}_2\mathbb{E}\bar{Y}+\bar{v}_2\\
					&=\bar{\boldsymbol{\Theta}}_2\big(\bar{X}-\mathbb{E}\bar{X}\big)+\bar{\hat{\boldsymbol{\Theta}}}_2\mathbb{E}\bar{X}+\bar{\boldsymbol{\Delta}}_2\big[\bar{\eta}_2
					-\mathbb{E}\bar{\eta}_2+P_2\big(\bar{X}-\mathbb{E}\bar{X}\big)\big]+\bar{\hat{\boldsymbol{\Delta}}}_2\big(\mathbb{E}\bar{\eta}_2+\Pi_2\mathbb{E}\bar{X}\big)+\bar{v}_2\\
					&=\big(\bar{\boldsymbol{\Theta}}_2+\bar{\boldsymbol{\Delta}}_2P_2\big)\big(\bar{X}-\mathbb{E}\bar{X}\big)+\big(\bar{\hat{\boldsymbol{\Theta}}}_2
					+\bar{\hat{\boldsymbol{\Delta}}}_2\Pi_2\big)\mathbb{E}\bar{X}+\bar{\boldsymbol{\Delta}}_2\big(\bar{\eta}_2-\mathbb{E}\bar{\eta}_2\big)+\bar{v}_2-\mathbb{E}\bar{v}_2
					+\bar{\hat{\boldsymbol{\Delta}}}_2\mathbb{E}\bar{\eta}_2+\mathbb{E}\bar{v}_2\\
					&=\bar{\boldsymbol{\Theta}}\big(\bar{X}-\mathbb{E}\bar{X}\big)+\bar{\hat{\boldsymbol{\Theta}}}\mathbb{E}\bar{X}+\tilde{\mathbb{V}}_2+\check{\mathbb{V}}_2.
				\end{aligned}
			\end{equation*}
			
			\begin{mydef}
				A triple $(\bar{\boldsymbol{\Theta}}(\cdot),\bar{\hat{\boldsymbol{\Theta}}}(\cdot),\tilde{\mathbb{V}}_2(\cdot)+\check{\mathbb{V}}_2(\cdot)) \in L^2(t,T;\mathbb{R}^{m_2\times 2n}) \times L^2(t,T;\mathbb{R}^{m_2\times 2n}) \times \mathcal{U}_2[t,T]$ is called a leader's non-anticipating closed-loop optimal strategy of Problem (MF-SLQ)$_l$ on $[t,T]$, if
				\begin{equation}
					\begin{aligned}
						&\hat{J}_2\big(t,\xi;\bar{\boldsymbol{\Theta}}\big(\bar{X}-\mathbb{E}\bar{X}\big)+\bar{\hat{\boldsymbol{\Theta}}}\mathbb{E}\bar{X}+\tilde{\mathbb{V}}_2+\check{\mathbb{V}}_2\big)\leq \hat{J}_2(t,\xi;u_2(\cdot)),\\
						&\hspace{2cm}\forall (t,\xi) \in [0,T] \times L^2_{\mathcal{F}_t}(\Omega;\mathbb{R}^n),\,\, \forall u_2(\cdot) \in \mathcal{U}_2[t,T].
					\end{aligned}
				\end{equation}
			\end{mydef}
			
			We have the following result.
			\begin{mythm}\label{Th-cl-l}
				Let (H1)-(H2) hold. Let Riccati equations (\ref{P2}) and (\ref{Pi2}) admit solutions $P_2(\cdot) \in C([t,T],\mathbb{R}^{2n \times 2n})$, $\Pi_2(\cdot) \in C([t,T],\mathbb{R}^{2n \times 2n})$, respectively. If Problem (MF-SLQ)$_l$ admits a non-anticipating closed-loop optimal strategy $(\bar{\boldsymbol{\Theta}}(\cdot),\bar{\hat{\boldsymbol{\Theta}}}(\cdot),\tilde{\mathbb{V}}_2(\cdot)+\check{\mathbb{V}}_2(\cdot))\in L^2(t,T;\mathbb{R}^{m_2\times 2n}) \times L^2(t,T;\mathbb{R}^{m_2\times 2n}) \times \mathcal{U}_2[t,T]$, it admits representation as (\ref{Theta,hat(Theta)}), (\ref{tildeV2,checkV2}) and the outcome is
				\begin{equation}\label{outcome leader}
					\bar{u}_2(\cdot)=\bar{\boldsymbol{\Theta}}(\cdot)\big(\bar{X}-\mathbb{E}\bar{X}\big)(\cdot)+\bar{\hat{\boldsymbol{\Theta}}}(\cdot)\mathbb{E}\bar{X}(\cdot)+\tilde{\mathbb{V}}_2(\cdot)+\check{\mathbb{V}}_2(\cdot),
				\end{equation}
				where $\bar{X}(\cdot)$ is the solution to the following MF-SDE:
				\begin{equation}\label{nonanticipating X}
					\left\{\begin{aligned}
						d\bar{X}(s)&=\big\{\tilde{\mathbb{A}}\big(\bar{X}-\mathbb{E}\bar{X}\big)+\tilde{\mathbb{B}}+\check{\mathbb{A}}\mathbb{E}\bar{X}+\check{\mathbb{B}}\big\}ds\\
						&\quad +\big\{\tilde{\mathbb{C}}\big(\bar{X}-\mathbb{E}\bar{X}\big)+\tilde{\mathbb{D}}+\check{\mathbb{C}}\mathbb{E}\bar{X}+\check{\mathbb{D}}\big\}dW(s),\quad s\in[t,T],\\
						\bar{X}(t)&=\bar{X}_0,
					\end{aligned}\right.
				\end{equation}
				where
				\begin{equation*}
					\begin{aligned}
						\tilde{\mathbb{A}}&:=\tilde{\mathcal{A}}+\tilde{\mathcal{M}}P_2+\tilde{\mathcal{F}}\big(I-P_2\tilde{\mathcal{K}}\big)^{-1}P_2\big(\tilde{\mathcal{C}}
						+\tilde{\mathcal{F}}^\top P_2\big)+\big[\tilde{\mathcal{B}}+\tilde{\mathcal{F}}\big(I-P_2\tilde{\mathcal{K}}\big)^{-1}P_2\tilde{\mathcal{D}}\big]\bar{\boldsymbol{\Theta}},\\
						\check{\mathbb{A}}&:=\check{\mathcal{A}}+\check{\mathcal{M}}\Pi_2+\check{\mathcal{F}}\big(I-P_2\check{\mathcal{K}}\big)^{-1}P_2\big(\check{\mathcal{C}}+\check{\mathcal{F}}^\top\Pi_2\big)
						+\big[\check{\mathcal{B}}+\check{\mathcal{F}}\big(I-P_2\check{\mathcal{K}}\big)^{-1}P_2\check{\mathcal{D}}\big]\bar{\hat{\boldsymbol{\Theta}}},\\
						\tilde{\mathbb{C}}&:=\tilde{\mathcal{C}}+\tilde{\mathcal{F}}^\top P_2+\tilde{\mathcal{K}}\big(I-P_2\tilde{\mathcal{K}}\big)^{-1}P_2\big(\tilde{\mathcal{C}}
						+\tilde{\mathcal{F}}^\top P_2\big)+\big[\tilde{\mathcal{D}}+\tilde{\mathcal{K}}\big(I-P_2\tilde{\mathcal{K}}\big)^{-1}P_2\tilde{\mathcal{D}}\big]\bar{\boldsymbol{\Theta}},\\
						\check{\mathbb{C}}&:=\check{\mathcal{C}}+\check{\mathcal{F}}^\top\Pi_2+\check{\mathcal{K}}\big(I-P_2\check{\mathcal{K}}\big)^{-1}P_2\big(\check{\mathcal{C}}
						+\check{\mathcal{F}}^\top\Pi_2\big)+\big[\check{\mathcal{D}}+\check{\mathcal{K}}\big(I-P_2\check{\mathcal{K}}\big)^{-1}P_2\check{\mathcal{D}}\big]\bar{\hat{\boldsymbol{\Theta}}},\\
						\tilde{\mathbb{B}}&:=\big[\tilde{\mathcal{M}}+\tilde{\mathcal{F}}\big(I-P_2\tilde{\mathcal{K}}\big)^{-1}P_2\tilde{\mathcal{F}}^\top\big]\big(\bar{\eta}_2
						-\mathbb{E}\bar{\eta}_2\big)+\tilde{\mathcal{F}}\big(I-P_2\tilde{\mathcal{K}}\big)^{-1}\big(\bar{\zeta}_2-\mathbb{E}\bar{\zeta}_2\big)\\
						&\qquad+\big[\tilde{\mathcal{B}}+\tilde{\mathcal{F}}\big(I-P_2\tilde{\mathcal{K}}\big)^{-1}P_2\tilde{\mathcal{D}}\big]\tilde{\mathbb{V}}_2+\tilde{\boldsymbol{b}}
						+\tilde{\mathcal{F}}\big(I-P_2\tilde{\mathcal{K}}\big)^{-1}P_2\tilde{\boldsymbol{\sigma}},\\
						\check{\mathbb{B}}&:=\big[\check{\mathcal{M}}+\check{\mathcal{F}}\big(I-P_2\check{\mathcal{K}}\big)^{-1}P_2\check{\mathcal{F}}^\top\big]\mathbb{E}\bar{\eta}_2
						+\check{\mathcal{F}}\big(I-P_2\check{\mathcal{K}}\big)^{-1}\mathbb{E}\bar{\zeta}_2\\
						&\qquad+\big[\check{\mathcal{B}}+\check{\mathcal{F}}\big(I-P_2\check{\mathcal{K}}\big)^{-1}P_2\check{\mathcal{D}}\big]\check{\mathbb{V}}_2
						+\check{\mathcal{F}}\big(I-P_2\check{\mathcal{K}}\big)^{-1}P_2\check{\boldsymbol{\sigma}}+\check{\boldsymbol{b}},\\
						\tilde{\mathbb{D}}&:=\big[\tilde{\mathcal{F}}^\top+\tilde{\mathcal{K}}\big(I-P_2\tilde{\mathcal{K}}\big)^{-1}P_2\tilde{\mathcal{F}}^\top\big]\big(\bar{\eta}_2-\mathbb{E}\bar{\eta}_2\big)
						+\tilde{\mathcal{K}}\big(I-P_2\tilde{\mathcal{K}}\big)^{-1}\big(\bar{\zeta}_2-\mathbb{E}\bar{\zeta}_2\big)\\
						&\qquad+\big[\tilde{\mathcal{D}}+\tilde{\mathcal{K}}\big(I-P_2\tilde{\mathcal{K}}\big)^{-1}P_2\tilde{\mathcal{D}}\big]\tilde{\mathbb{V}}_2
						+\big[I+\tilde{\mathcal{K}}\big(I-P_2\tilde{\mathcal{K}}\big)^{-1}P_2\big]\tilde{\boldsymbol{\sigma}},\\
						\check{\mathbb{D}}&:=\big[\check{\mathcal{F}}^\top+\check{\mathcal{K}}\big(I-P_2\check{\mathcal{K}}\big)^{-1}P_2\check{\mathcal{F}}^\top\big]\mathbb{E}\bar{\eta}_2
						+\check{\mathcal{K}}\big(I-P_2\check{\mathcal{K}}\big)^{-1}\mathbb{E}\bar{\zeta}_2\\
						&\qquad+\big[\check{\mathcal{D}}+\check{\mathcal{K}}\big(I-P_2\check{\mathcal{K}}\big)^{-1}P_2\check{\mathcal{D}}\big]\check{\mathbb{V}}_2
						+\big[I+\check{\mathcal{K}}\big(I-P_2\check{\mathcal{K}}\big)^{-1}P_2\big]\check{\boldsymbol{\sigma}}.
					\end{aligned}
				\end{equation*}
			\end{mythm}
			
			\begin{proof}
				We only need to give the equation that $\bar{X}(\cdot)$ satisfies. Putting (\ref{Theta,hat(Theta)}), (\ref{tildeV2,checkV2}) into the equation of $\bar{X}(\cdot)$ in (\ref{bar(X,Y,Z)}) and noticing (\ref{Eeta}), (\ref{eta-Eeta}), we get (\ref{nonanticipating X}).
			\end{proof}
			
			The following theorem gives an expression of the value function of the leader.
			\begin{mythm}\label{leader value function}
				Let (H1)-(H2) hold. If Problem (MF-SLQ)$_l$ is closed-loop solvable, the value function of the leader is
				\begin{equation}\label{value functin of the leader}
					\begin{aligned}
						&V_2(t,\xi)=\mathbb{E}\biggl\{\big\langle \Gamma_1(t)\big(\bar{X}_0-\mathbb{E}\bar{X}_0\big),\bar{X}_0-\mathbb{E}\bar{X}_0 \big\rangle+\big\langle \Gamma_2(t)\mathbb{E}\bar{X}_0,\mathbb{E}\bar{X}_0 \big\rangle
						+2\big\langle \alpha_1(t),\bar{X}_0-\mathbb{E}\bar{X}_0 \big\rangle\\
						& +2\big\langle \alpha_2(t),\mathbb{E}\bar{X}_0 \big\rangle+\mathcal{L}+\int_t^T\big[ \tilde{\mathbb{D}}^\top\Gamma_1\tilde{\mathbb{D}}
						+\check{\mathbb{D}}^\top\Gamma_1\check{\mathbb{D}}+2\tilde{\mathbb{B}}^\top\alpha_1+2\big(\tilde{\mathbb{D}}
						+\check{\mathbb{D}}\big)^\top\beta_1+2\check{\mathbb{B}}^\top\alpha_2+\tilde{\mathbb{L}}+\check{\mathbb{L}}\big]ds\biggr\},
					\end{aligned}
				\end{equation}
				where $\big(\Gamma_1(\cdot),\Gamma_2(\cdot)\big) \in C([t,T],\mathbb{S}^{2n}) \times C([t,T],\mathbb{S}^{2n})$ is the solution of the following Lyapunov equations:
				\begin{equation}\label{Lyapunov equations}
					\left\{\begin{aligned}
						&0=\dot{\Gamma}_1+\tilde{\mathbb{A}}^\top\Gamma_1+\Gamma_1\tilde{\mathbb{A}}+\tilde{\mathbb{C}}^\top\Gamma_1\tilde{\mathbb{C}}+\tilde{\mathbb{Q}},\quad \Gamma_1(T)=\tilde{\mathbb{H}},\\
						&0=\dot{\Gamma}_2+\check{\mathbb{A}}^\top\Gamma_2+\Gamma_2\check{\mathbb{A}}+\check{\mathbb{C}}^\top\Gamma_1\check{\mathbb{C}}+\check{\mathbb{Q}},\quad \Gamma_2(T)=\check{\mathbb{H}},
					\end{aligned}\right.
				\end{equation}
				$(\alpha_1(\cdot),\beta_1(\cdot)) \in L^2_{\mathbb{F}}(t,T;\mathbb{R}^{2n})\times L^2_{\mathbb{F}}(t,T;\mathbb{R}^{2n})$ and $(\alpha_2(\cdot),\beta_2(\cdot))\in L^2_{\mathbb{F}}(t,T;\mathbb{R}^{2n})\times L^2_{\mathbb{F}}(t,T;\mathbb{R}^{2n})$ are adapted solutions to following two BSDEs:
				\begin{equation}\label{BSDE of the leader-1}
					\left\{\begin{aligned}
						&-d\alpha_1(s)=\big[\tilde{\mathbb{A}}^\top\alpha_1+\tilde{\mathbb{C}}^\top\beta_1+\tilde{\mathbb{S}}+\Gamma_1\tilde{\mathbb{B}}+\tilde{\mathbb{C}}^\top\Gamma_1\tilde{\mathbb{D}}\big]ds-\beta_1dW(s),\\
						&\alpha_1(T)=\tilde{h},
					\end{aligned}\right.
				\end{equation}
				\begin{equation}\label{BSDE of the leader-2}
					\left\{\begin{aligned}
						&-d\alpha_2(s)=\big[\check{\mathbb{A}}^\top\alpha_2+\check{\mathbb{S}}+\Gamma_2\check{\mathbb{B}}+\check{\mathbb{C}}^\top\Gamma_1\check{\mathbb{D}}+\check{\mathbb{C}}^\top\beta_1\big]ds-\beta_2dW(s),\\
						&\alpha_2(T)=\check{h},
					\end{aligned}\right.
				\end{equation}
				respectively, where
				\begin{equation*}
					\begin{aligned}
						&\tilde{\mathbb{H}}:=\mathbb{M}_1^\top G^2\mathbb{M}_1,\,\,\,\check{\mathbb{H}}:=\mathbb{M}_1^\top\big(G^2+\hat{G}^2\big)\mathbb{M}_1,\,\,\,\tilde{h}:=\mathbb{M}_1^\top g^2,\,\,\,
						\check{h}:=\mathbb{M}_1^\top\big(g^2+\hat{g}^2\big),\\
						&\tilde{\mathbb{Q}}:=\mathbb{M}^\top_1\tilde{Q}_{11}\mathbb{M}_1+P^\top_2\mathbb{M}^\top_2\tilde{Q}_{12}\mathbb{M}_1+\mathbb{M}^\top_1\tilde{Q}^\top_{12}\mathbb{M}_2P_2
						+\mathbb{M}^\top_1\tilde{Q}^\top_{13}\mathbb{M}_2\big(I-P_2\tilde{\mathcal{K}}\big)^{-1}P_2\big(\tilde{\mathcal{C}}+\tilde{\mathcal{F}}^\top P_2\\
						&\qquad +\tilde{\mathcal{D}}\bar{\boldsymbol{\Theta}}\big)+\big(\tilde{\mathcal{C}}+\tilde{\mathcal{F}}^\top P_2+\tilde{\mathcal{D}}\bar{\boldsymbol{\Theta}}\big)^\top P_2^\top
						\big(I-\tilde{\mathcal{K}}^\top P_2^\top \big)^{-1}\mathbb{M}^\top_2\tilde{Q}_{13}\mathbb{M}_1+P_2^\top\mathbb{M}^\top_2\tilde{Q}_{22}\mathbb{M}_2P_2\\
						&\qquad+P^\top_2\mathbb{M}^\top_2\tilde{Q}^\top_{23}\mathbb{M}_2\big(I-P_2\tilde{\mathcal{K}}\big)^{-1}P_2\big(\tilde{\mathcal{C}}
						+\tilde{\mathcal{F}}^\top P_2+\tilde{\mathcal{D}}\bar{\boldsymbol{\Theta}}\big)+\bar{\boldsymbol{\Theta}}^\top\tilde{S}_2\mathbb{M}_2P_2+P^\top_2\mathbb{M}^\top_2\tilde{S}^\top_2\bar{\boldsymbol{\Theta}}\\
						&\qquad+\big(\tilde{\mathcal{C}}+\tilde{\mathcal{F}}^\top P_2+\tilde{\mathcal{D}}\bar{\boldsymbol{\Theta}}\big)^\top P_2^\top
						\big(I-\tilde{\mathcal{K}}^\top P_2^\top \big)^{-1}\mathbb{M}^\top_2\tilde{Q}_{23}\mathbb{M}_2P_2+\bar{\boldsymbol{\Theta}}^\top\tilde{S}_1\mathbb{M}_1+\mathbb{M}^\top_1\tilde{S}^\top_1\bar{\boldsymbol{\Theta}}\\
						&\qquad+ \big(\tilde{\mathcal{C}}+\tilde{\mathcal{F}}^\top P_2+\tilde{\mathcal{D}}\bar{\boldsymbol{\Theta}}\big)^\top P_2^\top
						\big(I-\tilde{\mathcal{K}}^\top P_2^\top \big)^{-1}\mathbb{M}^\top_2\tilde{Q}_{33}\mathbb{M}_2\big(I-P_2\tilde{\mathcal{K}}\big)^{-1}P_2\big(\tilde{\mathcal{C}}
						+\tilde{\mathcal{F}}^\top P_2+\tilde{\mathcal{D}}\bar{\boldsymbol{\Theta}}\big)\\
						&\qquad +\big(\tilde{\mathcal{C}}+\tilde{\mathcal{F}}^\top P_2+\tilde{\mathcal{D}}\bar{\boldsymbol{\Theta}}\big)^\top P_2^\top
						\big(I-\tilde{\mathcal{K}}^\top P_2^\top \big)^{-1}\mathbb{M}^\top_2\tilde{S}^\top_3\bar{\boldsymbol{\Theta}}\\
						&\qquad+\bar{\boldsymbol{\Theta}}^\top\tilde{S}_3\mathbb{M}_2\big(I-P_2\tilde{\mathcal{K}}\big)^{-1}P_2\big(\tilde{\mathcal{C}}
						+\tilde{\mathcal{F}}^\top P_2+\tilde{\mathcal{D}}\bar{\boldsymbol{\Theta}}\big)+\bar{\boldsymbol{\Theta}}^\top\tilde{R}\bar{\boldsymbol{\Theta}},\\
						&\tilde{\mathbb{S}}:=\big[\mathbb{M}^\top_1\tilde{Q}^\top_{12}\mathbb{M}_2+P^\top_2\mathbb{M}^\top_2\tilde{Q}_{22}\mathbb{M}_2+\bar{\boldsymbol{\Theta}}^\top\tilde{S}_2\mathbb{M}_2\\
						&\qquad\qquad +\big(\tilde{\mathcal{C}}+\tilde{\mathcal{F}}^\top P_2+\tilde{\mathcal{D}}\bar{\boldsymbol{\Theta}}\big)^\top P_2^\top
						\big(I-\tilde{\mathcal{K}}^\top P_2^\top \big)^{-1}\mathbb{M}^\top_2\tilde{Q}_{23}\mathbb{M}_2\big]\big(\bar{\eta}_2-\mathbb{E}\bar{\eta}_2\big)\\
						&\qquad +\big[P^\top_2\mathbb{M}^\top_2\tilde{Q}^\top_{23}+\mathbb{M}^\top_1\tilde{Q}^\top_{13}+\big(\tilde{\mathcal{C}}
						+\tilde{\mathcal{F}}^\top P_2+\tilde{\mathcal{D}}\bar{\boldsymbol{\Theta}}\big)^\top P_2^\top\big(I-\tilde{\mathcal{K}}^\top P_2^\top \big)^{-1}\mathbb{M}^\top_2\tilde{Q}_{33}\\
						&\qquad\qquad+\bar{\boldsymbol{\Theta}}^\top\tilde{S}_3\big]\mathbb{M}_2\big(I-P_2\tilde{\mathcal{K}}\big)^{-1}
						\big[ P_2\tilde{\mathcal{F}}^\top\big(\bar{\eta}_2-\mathbb{E}\bar{\eta}_2\big)+\bar{\zeta}_2-\mathbb{E}\bar{\zeta}_2+P_2\tilde{\mathcal{D}}\tilde{\mathbb{V}}_2+P_2\tilde{\boldsymbol{\sigma}}\big]\\
						&\qquad +\big[\mathbb{M}^\top_1\tilde{S}^\top_1+P^\top_2\mathbb{M}^\top_2\tilde{S}^\top_2+\bar{\boldsymbol{\Theta}}^\top\tilde{R}+\big(\tilde{\mathcal{C}}
						+\tilde{\mathcal{F}}^\top P_2+\tilde{\mathcal{D}}\bar{\boldsymbol{\Theta}}\big)^\top P_2^\top\big(I-\tilde{\mathcal{K}}^\top P_2^\top \big)^{-1}\mathbb{M}^\top_2\tilde{S}^\top_3\big]\tilde{\mathbb{V}}_2\\
						&\qquad +\mathbb{M}^\top_1\big(\tilde{q}_1-\mathbb{E}\tilde{q}_1\big)+P^\top_2\mathbb{M}^\top_2\big(\tilde{q}_2-\mathbb{E}\tilde{q}_2\big)+\bar{\boldsymbol{\Theta}}^\top\big(\tilde{\rho}-\mathbb{E}\tilde{\rho}\big)\\
						&\qquad +\big(\tilde{\mathcal{C}}+\tilde{\mathcal{F}}^\top P_2+\tilde{\mathcal{D}}\bar{\boldsymbol{\Theta}}\big)^\top P_2^\top
						\big(I-\tilde{\mathcal{K}}^\top P_2^\top \big)^{-1}\mathbb{M}^\top_2\big(\tilde{q}_3-\mathbb{E}\tilde{q}_3\big),\\
						&\tilde{\mathbb{L}}:=2\big\langle \mathbb{M}^\top_2\tilde{Q}_{23}\mathbb{M}_2\big(\bar{\eta}_2-\mathbb{E}\bar{\eta}_2\big)+\mathbb{M}_2^\top\big(\tilde{q}_3-\mathbb{E}\tilde{q}_3\big)
						+\mathbb{M}^\top_2\tilde{S}^\top_3\tilde{\mathbb{V}}_2,\\
						&\qquad\qquad\qquad \big(I-P_2\tilde{\mathcal{K}}\big)^{-1}\big[ P_2\tilde{\mathcal{F}}^\top\big(\bar{\eta}_2-\mathbb{E}\bar{\eta}_2\big)
						+\bar{\zeta}_2-\mathbb{E}\bar{\zeta}_2+P_2\tilde{\mathcal{D}}\tilde{\mathbb{V}}_2+P_2\tilde{\boldsymbol{\sigma}}\big] \big\rangle\\
						&\qquad +\big\langle \mathbb{M}^\top_2\tilde{Q}_{33}\mathbb{M}_2\big(I-P_2\tilde{\mathcal{K}}\big)^{-1}
						\big[ P_2\tilde{\mathcal{F}}^\top\big(\bar{\eta}_2-\mathbb{E}\bar{\eta}_2\big)+\bar{\zeta}_2-\mathbb{E}\bar{\zeta}_2+P_2\tilde{\mathcal{D}}\tilde{\mathbb{V}}_2+P_2\tilde{\boldsymbol{\sigma}}\big],\\
						&\qquad\qquad\qquad \big(I-P_2\tilde{\mathcal{K}}\big)^{-1}\big[ P_2\tilde{\mathcal{F}}^\top\big(\bar{\eta}_2-\mathbb{E}\bar{\eta}_2\big)+\bar{\zeta}_2
						-\mathbb{E}\bar{\zeta}_2+P_2\tilde{\mathcal{D}}\tilde{\mathbb{V}}_2+P_2\tilde{\boldsymbol{\sigma}}\big] \big\rangle\\
						&\qquad+\big\langle \mathbb{M}^\top_2\tilde{Q}_{22}\mathbb{M}_2\big(\bar{\eta}_2-\mathbb{E}\bar{\eta}_2\big),\bar{\eta}_2-\mathbb{E}\bar{\eta}_2\big\rangle
						+2\big\langle\tilde{S}_2\mathbb{M}_2\big(\bar{\eta}_2-\mathbb{E}\bar{\eta}_2\big),\tilde{\mathbb{V}}_2 \big\rangle+\big\langle \tilde{R}\tilde{\mathbb{V}}_2,\tilde{\mathbb{V}}_2 \big\rangle\\
						&\qquad +2\big\langle \mathbb{M}^\top_2\big(\tilde{q}_2-\mathbb{E}\tilde{q}_2\big),\bar{\eta}_2-\mathbb{E}\bar{\eta}_2 \big\rangle
						+2\big\langle \tilde{\rho}-\mathbb{E}\tilde{\rho},\tilde{\mathbb{V}}_2 \big\rangle,\\
						&\check{\mathbb{L}}:=2\big\langle \mathbb{M}^\top_2\check{Q}_{23}\mathbb{M}_2\mathbb{E}\bar{\eta}_2+\mathbb{M}^\top_2\check{S}^\top_3\check{\mathbb{V}}_2
						+\mathbb{M}^\top_2\check{q}_3 ,\big(I-P_2\check{\mathcal{K}}\big)^{-1}\big[ P_2\check{\mathcal{F}}^\top\mathbb{E}\bar{\eta}_2+\mathbb{E}\bar{\zeta}_2+P_2\check{\mathcal{D}}\check{\mathbb{V}}_2
						+P_2\check{\boldsymbol{\sigma}}\big] \big\rangle\\
						&\qquad +\big\langle \mathbb{M}^\top_2\check{Q}_{33}\mathbb{M}_2\big(I-P_2\check{\mathcal{K}}\big)^{-1}\big[ P_2\check{\mathcal{F}}^\top\mathbb{E}\bar{\eta}_2
						+\mathbb{E}\bar{\zeta}_2+P_2\check{\mathcal{D}}\check{\mathbb{V}}_2+P_2\check{\boldsymbol{\sigma}}\big],\\
						&\qquad\qquad\quad \big(I-P_2\check{\mathcal{K}}\big)^{-1}\big[ P_2\check{\mathcal{F}}^\top\mathbb{E}\bar{\eta}_2+\mathbb{E}\bar{\zeta}_2
						+P_2\check{\mathcal{D}}\check{\mathbb{V}}_2+P_2\check{\boldsymbol{\sigma}}\big] \big\rangle\\
						&\qquad +\big\langle \mathbb{M}^\top_2\check{Q}_{22}\mathbb{M}_2\mathbb{E}\bar{\eta}_2,\mathbb{E}\bar{\eta}_2 \big\rangle
						+2\big\langle \check{S}_2\mathbb{M}_2\mathbb{E}\bar{\eta}_2,\check{\mathbb{V}}_2 \big\rangle+\big\langle \check{R}\check{\mathbb{V}}_2,\check{\mathbb{V}}_2 \big\rangle
						+2\big\langle \mathbb{M}^\top_2\check{q}_2,\mathbb{E}\bar{\eta}_2 \big\rangle+2\big\langle \check{\rho},\check{\mathbb{V}}_2 \big\rangle,\\
						&\check{\mathbb{Q}}:=\mathbb{M}^\top_1\check{Q}_{11}\mathbb{M}_1+\Pi_2^\top\mathbb{M}^\top_2\check{Q}_{12}\mathbb{M}_1+\mathbb{M}^\top_1\check{Q}^\top_{12}\mathbb{M}_2\Pi_2\\
						&\qquad +\big(\check{\mathcal{C}}+\check{\mathcal{F}}^\top\Pi_2+\check{\mathcal{D}}\bar{\hat{\boldsymbol{\Theta}}}\big)^\top P_2^\top
						\big(I-\check{\mathcal{K}}^\top P_2^\top \big)^{-1}\mathbb{M}^\top_2\check{Q}_{13}\mathbb{M}_1\\
						&\qquad +\mathbb{M}^\top_1\check{Q}^\top_{13}\mathbb{M}_2\big(I-P_2\check{\mathcal{K}}\big)^{-1}P_2\big(\check{\mathcal{C}}+\check{\mathcal{F}}^\top\Pi_2
						+\check{\mathcal{D}}\bar{\hat{\boldsymbol{\Theta}}}\big)+\Pi_2^\top\mathbb{M}^\top_2\check{Q}_{22}\mathbb{M}_2\Pi_2\\
						&\qquad +\Pi^\top_2\mathbb{M}^\top_2\check{Q}^\top_{23}\mathbb{M}_2\big(I-P_2\check{\mathcal{K}}\big)^{-1}P_2\big(\check{\mathcal{C}}+\check{\mathcal{F}}^\top\Pi_2
						+\check{\mathcal{D}}\bar{\hat{\boldsymbol{\Theta}}}\big)+\bar{\hat{\boldsymbol{\Theta}}}^\top\check{S}_2\mathbb{M}_2\Pi_2+\Pi_2^\top\mathbb{M}^\top_2\check{S}^\top_2\bar{\hat{\boldsymbol{\Theta}}}\\
						&\qquad +\big(\check{\mathcal{C}}+\check{\mathcal{F}}^\top\Pi_2+\check{\mathcal{D}}\bar{\hat{\boldsymbol{\Theta}}}\big)^\top P_2^\top
						\big(I-\check{\mathcal{K}}^\top P_2^\top \big)^{-1}\mathbb{M}^\top_2\check{Q}_{23}\mathbb{M}_2\Pi_2+\bar{\hat{\boldsymbol{\Theta}}}^\top\check{S}_1\mathbb{M}_1
						+\mathbb{M}^\top_1\check{S}^\top_1\bar{\hat{\boldsymbol{\Theta}}}\\
						&\qquad +\big(\check{\mathcal{C}}+\check{\mathcal{F}}^\top\Pi_2+\check{\mathcal{D}}\bar{\hat{\boldsymbol{\Theta}}}\big)^\top P_2^\top
						\big(I-\check{\mathcal{K}}^\top P_2^\top \big)^{-1}\mathbb{M}^\top_2\check{Q}_{33}\mathbb{M}_2\big(I-P_2\check{\mathcal{K}}\big)^{-1}P_2\big(\check{\mathcal{C}}
						+\check{\mathcal{F}}^\top\Pi_2+\check{\mathcal{D}}\bar{\hat{\boldsymbol{\Theta}}}\big)\\	
					\end{aligned}
				\end{equation*}
				\begin{equation*}
					\begin{aligned}
						&\qquad+\bar{\hat{\boldsymbol{\Theta}}}^\top\check{S}_3\mathbb{M}_2\big(I-P_2\check{\mathcal{K}}\big)^{-1}P_2\big(\check{\mathcal{C}}+\check{\mathcal{F}}^\top\Pi_2+\check{\mathcal{D}}\bar{\hat{\boldsymbol{\Theta}}}\big)\\
						&\qquad +\big(\check{\mathcal{C}}+\check{\mathcal{F}}^\top\Pi_2+\check{\mathcal{D}}\bar{\hat{\boldsymbol{\Theta}}}\big)^\top P_2^\top
						\big(I-\check{\mathcal{K}}^\top P_2^\top \big)^{-1}\mathbb{M}^\top_2\check{S}^\top_3\bar{\hat{\boldsymbol{\Theta}}}+\bar{\hat{\boldsymbol{\Theta}}}^\top\check{R}\bar{\hat{\boldsymbol{\Theta}}},\\
						&\check{\mathbb{S}}:=\big[\mathbb{M}^\top_1\check{Q}^\top_{12}\mathbb{M}_2+\Pi_2^\top\mathbb{M}^\top_2\check{Q}_{22}\mathbb{M}_2+\big(\check{\mathcal{C}}
						+\check{\mathcal{F}}^\top\Pi_2+\check{\mathcal{D}}\bar{\hat{\boldsymbol{\Theta}}}\big)^\top P_2^\top\big(I-\check{\mathcal{K}}^\top P_2^\top \big)^{-1} \mathbb{M}^\top_2\check{Q}_{23}\mathbb{M}_2\\
						&\qquad +\bar{\hat{\boldsymbol{\Theta}}}^\top\check{S}_2\mathbb{M}_2\big]\mathbb{E}\bar{\eta}_2
						+\big[\mathbb{M}_1\check{Q}^\top_{13}+\Pi_2^\top\mathbb{M}_2^\top\check{Q}^\top_{23}+\bar{\hat{\boldsymbol{\Theta}}}^\top\check{S}_3\big]\mathbb{M}_2\big(I-P_2\check{\mathcal{K}}\big)^{-1}\\
						&\qquad \times \big[ P_2\check{\mathcal{F}}^\top\mathbb{E}\bar{\eta}_2+\mathbb{E}\bar{\zeta}_2+P_2\check{\mathcal{D}}\check{\mathbb{V}}_2+P_2\check{\boldsymbol{\sigma}}\big]
						+\big(\check{\mathcal{C}}+\check{\mathcal{F}}^\top\Pi_2+\check{\mathcal{D}}\bar{\hat{\boldsymbol{\Theta}}}\big)^\top P_2^\top\big(I-\check{\mathcal{K}}^\top P_2^\top \big)^{-1}\\
						&\qquad \times\mathbb{M}^\top_2\check{Q}_{33}\mathbb{M}_2\big(I-P_2\check{\mathcal{K}}\big)^{-1}
						\big[ P_2\check{\mathcal{F}}^\top\mathbb{E}\bar{\eta}_2+\mathbb{E}\bar{\zeta}_2+P_2\check{\mathcal{D}}\check{\mathbb{V}}_2+P_2\check{\boldsymbol{\sigma}}\big]\\
						&\qquad +\big[\mathbb{M}^\top_1\check{S}^\top_1+\Pi_2^\top\mathbb{M}^\top_2\check{S}^\top_2+\bar{\hat{\boldsymbol{\Theta}}}^\top\check{R}
						+\big(\check{\mathcal{C}}+\check{\mathcal{F}}^\top\Pi_2+\check{\mathcal{D}}\bar{\hat{\boldsymbol{\Theta}}}\big)^\top P_2^\top
						\big(I-\check{\mathcal{K}}^\top P_2^\top \big)^{-1}\mathbb{M}^\top_2\check{S}^\top_3\big]\check{\mathbb{V}}_2\\
						&\qquad +\mathbb{M}^\top_1\check{q}_1+\Pi^\top_2\mathbb{M}^\top_2\check{q}_2+\big(\check{\mathcal{C}}+\check{\mathcal{F}}^\top\Pi_2
						+\check{\mathcal{D}}\bar{\hat{\boldsymbol{\Theta}}}\big)^\top P_2^\top\big(I-\check{\mathcal{K}}^\top P_2^\top \big)^{-1}\mathbb{M}^\top_2\check{q}_3+\bar{\hat{\boldsymbol{\Theta}}}^\top\check{\rho}.\\
					\end{aligned}
				\end{equation*}
			\end{mythm}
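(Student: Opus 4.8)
The plan is to compute the value function by evaluating $\hat{J}_2$ along the optimal closed-loop state and performing a completion-of-squares against the Lyapunov matrices $\Gamma_1,\Gamma_2$ and the adjoint processes $\alpha_1,\alpha_2$. By closed-loop solvability and Theorem \ref{Th-cl-l}, the leader's optimal control is $\bar{u}_2=\bar{\boldsymbol{\Theta}}(\bar{X}-\mathbb{E}\bar{X})+\bar{\hat{\boldsymbol{\Theta}}}\mathbb{E}\bar{X}+\tilde{\mathbb{V}}_2+\check{\mathbb{V}}_2$, with $\bar{X}(\cdot)$ the solution of the decoupled MF-SDE (\ref{nonanticipating X}) and $(\bar{\eta}_2(\cdot),\bar{\zeta}_2(\cdot))$ the adapted solution of (\ref{eta2,zeta2}). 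Since $V_2(t,\xi)=\hat{J}_2(t,\xi;\bar{u}_2(\cdot))$, the first task is to rewrite the substituted cost (\ref{lclf}) entirely in terms of $\bar{X},\bar{\eta}_2,\bar{\zeta}_2$. I would eliminate the augmented adjoints $\bar{Y},\bar{Z}$ through the decoupling relations (\ref{Eeta}) and (\ref{eta-Eeta}) and the representations (\ref{EZ,Z-EZ}) (and their bar-analogues), recovering the physical components $\bar{x},\bar{\eta}_1,\bar{\zeta}_1$ from $\bar{X},\bar{Y},\bar{Z}$ by the selection matrices $\mathbb{M}_1,\mathbb{M}_2$. This is exactly the computation that produces the bundled running weights $\tilde{\mathbb{Q}},\check{\mathbb{Q}},\tilde{\mathbb{S}},\check{\mathbb{S}}$ and the inhomogeneous collectors $\tilde{\mathbb{L}},\check{\mathbb{L}}$.

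Next I would split the dynamics (\ref{nonanticipating X}) into its fluctuation part $d(\bar{X}-\mathbb{E}\bar{X})=[\tilde{\mathbb{A}}(\bar{X}-\mathbb{E}\bar{X})+\tilde{\mathbb{B}}]ds+[\tilde{\mathbb{C}}(\bar{X}-\mathbb{E}\bar{X})+\check{\mathbb{C}}\mathbb{E}\bar{X}+\tilde{\mathbb{D}}+\check{\mathbb{D}}]dW$ and mean part $d\mathbb{E}\bar{X}=[\check{\mathbb{A}}\mathbb{E}\bar{X}+\check{\mathbb{B}}]ds$, and apply It\^o's formula to $\langle\Gamma_1(\bar{X}-\mathbb{E}\bar{X}),\bar{X}-\mathbb{E}\bar{X}\rangle$, $\langle\Gamma_2\mathbb{E}\bar{X},\mathbb{E}\bar{X}\rangle$, $2\langle\alpha_1,\bar{X}-\mathbb{E}\bar{X}\rangle$ and $2\langle\alpha_2,\mathbb{E}\bar{X}\rangle$. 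A structural point worth isolating is that the full diffusion of $\bar{X}-\mathbb{E}\bar{X}$ carries the mean-dependent block $\check{\mathbb{C}}\mathbb{E}\bar{X}$; under expectation its quadratic-variation contribution $\langle\check{\mathbb{C}}^\top\Gamma_1\check{\mathbb{C}}\mathbb{E}\bar{X},\mathbb{E}\bar{X}\rangle$ is a purely deterministic mean quadratic, which is precisely why $\Gamma_1$ appears inside the $\Gamma_2$-equation in (\ref{Lyapunov equations}). The Lyapunov equations are then chosen so that $\dot{\Gamma}_1+\tilde{\mathbb{A}}^\top\Gamma_1+\Gamma_1\tilde{\mathbb{A}}+\tilde{\mathbb{C}}^\top\Gamma_1\tilde{\mathbb{C}}=-\tilde{\mathbb{Q}}$ cancels the fluctuation running weight, and analogously for the mean channel.

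The two BSDEs (\ref{BSDE of the leader-1}) and (\ref{BSDE of the leader-2}) are designed to absorb the first-order and cross contributions. In the expansion of $2\langle\alpha_1,\bar{X}-\mathbb{E}\bar{X}\rangle$ the drift $-[\tilde{\mathbb{A}}^\top\alpha_1+\tilde{\mathbb{C}}^\top\beta_1+\tilde{\mathbb{S}}+\Gamma_1\tilde{\mathbb{B}}+\tilde{\mathbb{C}}^\top\Gamma_1\tilde{\mathbb{D}}]$ is matched to the terms linear in $\bar{X}-\mathbb{E}\bar{X}$ that are generated by $\tilde{\mathbb{B}},\tilde{\mathbb{D}}$ and $\tilde{\mathbb{S}}$; the It\^o cross term $\langle\beta_1,\check{\mathbb{C}}\mathbb{E}\bar{X}\rangle=\langle\check{\mathbb{C}}^\top\beta_1,\mathbb{E}\bar{X}\rangle$ is mean-linear and is passed into the $\alpha_2$-equation, explaining the $\check{\mathbb{C}}^\top\beta_1$ coupling there, while the cross terms $2(\tilde{\mathbb{D}}+\check{\mathbb{D}})^\top\beta_1$ survive as inhomogeneous residuals. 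Adding the four It\^o identities, integrating over $[t,T]$ and taking expectation, the stochastic integrals vanish; the terminal values $\Gamma_1(T)=\tilde{\mathbb{H}},\Gamma_2(T)=\check{\mathbb{H}},\alpha_1(T)=\tilde{h},\alpha_2(T)=\check{h}$ reproduce the terminal cost through $\mathbb{M}_1$, and the drift cancellations reduce everything to the running cost minus the residual $\tilde{\mathbb{D}}^\top\Gamma_1\tilde{\mathbb{D}}+\check{\mathbb{D}}^\top\Gamma_1\check{\mathbb{D}}+2\tilde{\mathbb{B}}^\top\alpha_1+2(\tilde{\mathbb{D}}+\check{\mathbb{D}})^\top\beta_1+2\check{\mathbb{B}}^\top\alpha_2+\tilde{\mathbb{L}}+\check{\mathbb{L}}$. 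Solving the identity for $\hat{J}_2(t,\xi;\bar{u}_2)$, restoring the separately accounted constant $\mathcal{L}$ of (\ref{mathcal L}), and using $\bar{X}(t)=\bar{X}_0$ to place $\Gamma_1(t),\Gamma_2(t),\alpha_1(t),\alpha_2(t)$ outside the integral, yields exactly (\ref{value functin of the leader}).

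The hard part will be the first step: verifying that after substituting $\bar{\eta}_1,\bar{\zeta}_1$ (the lower blocks of $\bar{Y},\bar{Z}$) via (\ref{Eeta})--(\ref{eta-Eeta}) and the $\bar{Z}$-representations, together with the gains $\bar{\boldsymbol{\Theta}},\bar{\hat{\boldsymbol{\Theta}}}$ and the Riccati matrices $P_2,\Pi_2$, the genuinely quadratic block of (\ref{lclf}) collapses to exactly $\tilde{\mathbb{Q}},\check{\mathbb{Q}}$ and the mixed and inhomogeneous parts to exactly $\tilde{\mathbb{S}},\check{\mathbb{S}},\tilde{\mathbb{L}},\check{\mathbb{L}}$. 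This is a lengthy but mechanical matrix manipulation; its only genuine subtlety is tracking the factor $(I-P_2\tilde{\mathcal{K}})^{-1}$ (respectively $(I-P_2\check{\mathcal{K}})^{-1}$) that arises each time $\bar{\zeta}_2$ — equivalently $\bar{Z}-\mathbb{E}\bar{Z}$ or $\mathbb{E}\bar{Z}$ — is eliminated, and keeping the fluctuation and mean channels rigorously separated throughout so that $\Gamma_1,\alpha_1$ and $\Gamma_2,\alpha_2$ each collect their own contributions.
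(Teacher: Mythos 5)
Your proposal is correct and follows essentially the same route as the paper's proof: first rewrite $\hat{J}_2(t,\xi;\bar{u}_2(\cdot))$ along the optimal trajectory in terms of $\bar{X},\bar{\eta}_2,\bar{\zeta}_2$ via the selection matrices $\mathbb{M}_1,\mathbb{M}_2$ and the decoupling relations, producing $\tilde{\mathbb{H}},\check{\mathbb{H}},\tilde{h},\check{h},\tilde{\mathbb{Q}},\check{\mathbb{Q}},\tilde{\mathbb{S}},\check{\mathbb{S}},\tilde{\mathbb{L}},\check{\mathbb{L}}$, then apply It\^o's formula to $\big\langle \Gamma_1\big(\bar{X}-\mathbb{E}\bar{X}\big),\bar{X}-\mathbb{E}\bar{X}\big\rangle+2\big\langle\alpha_1,\bar{X}-\mathbb{E}\bar{X}\big\rangle$ and $\big\langle\Gamma_2\mathbb{E}\bar{X},\mathbb{E}\bar{X}\big\rangle+2\big\langle\alpha_2,\mathbb{E}\bar{X}\big\rangle$ and cancel against the Lyapunov equations and BSDEs. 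Your structural observations (the $\check{\mathbb{C}}^\top\Gamma_1\check{\mathbb{C}}$ coupling of $\Gamma_1$ into the $\Gamma_2$-equation from the mean-dependent diffusion block, and the $\check{\mathbb{C}}^\top\beta_1$ cross term passed to the $\alpha_2$-equation) are exactly the cancellations the paper's computation relies on.
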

			
			\begin{proof}
				By the definition of leader's value function, we can get
				\begin{equation*}
					\begin{aligned}
						&V_2(t,\xi)=\mathbb{E}\bigg\{ \big\langle G^2\big(\bar{x}(T)-\mathbb{E}\bar{x}(T)\big),\bar{x}(T)-\mathbb{E}\bar{x}(T) \big\rangle
						+\big\langle \boldsymbol{G}^2\mathbb{E}\bar{x}(T),\mathbb{E}\bar{x}(T)\big\rangle+2\big\langle g^2,\bar{x}(T)-\mathbb{E}\bar{x}(T) \big\rangle\\
						&\quad +2\big\langle \boldsymbol{g}^2,\mathbb{E}\bar{x}(T)\big\rangle
						+\int_t^T \Big[\big\langle \tilde{Q}_{11}\big(\bar{x}-\mathbb{E}\bar{x}\big),\bar{x}-\mathbb{E}\bar{x}\big\rangle
						+2\big\langle\tilde{Q}_{12}\big(\bar{x}-\mathbb{E}\bar{x}\big),\bar{\eta}_1-\mathbb{E}\bar{\eta}_1 \big\rangle\\
						&\quad +2\big\langle \tilde{Q}_{13}\big(\bar{x}-\mathbb{E}\bar{x}\big),\bar{\zeta}_1-\mathbb{E}\bar{\zeta}_1 \big\rangle
						+\big\langle \tilde{Q}_{22}\big(\bar{\eta}_1-\mathbb{E}\bar{\eta}_1\big),\bar{\eta}_1-\mathbb{E}\bar{\eta}_1\big\rangle
						+2\big\langle \tilde{Q}_{23}\big(\bar{\eta}_1-\mathbb{E}\bar{\eta}_1\big),\bar{\zeta}_1-\mathbb{E}\bar{\zeta}_1 \big\rangle\\
						&\quad +\big\langle \tilde{Q}_{33}\big(\bar{\zeta}_1-\mathbb{E}\bar{\zeta}_1\big),\bar{\zeta}_1-\mathbb{E}\bar{\zeta}_1 \big\rangle
						+2\big\langle \tilde{S}_1\big(\bar{x}-\mathbb{E}\bar{x}\big),\bar{u}_2-\mathbb{E}\bar{u}_2 \big\rangle
						+2\big\langle \tilde{S}_2\big(\bar{\eta}_1-\mathbb{E}\bar{\eta}_1\big),\bar{u}_2-\mathbb{E}\bar{u}_2 \big\rangle\\
						&\quad +2\big\langle \tilde{S}_3\big(\bar{\zeta}_1-\mathbb{E}\bar{\zeta}_1\big),\bar{u}_2-\mathbb{E}\bar{u}_2 \big\rangle
						+\big\langle \tilde{R}\big(\bar{u}_2-\mathbb{E}\bar{u}_2\big),\bar{u}_2-\mathbb{E}\bar{u}_2 \big\rangle
						+2\big\langle \tilde{q}_1-\mathbb{E}\tilde{q}_1,\bar{x}-\mathbb{E}\bar{x}  \big\rangle\\
						&\quad +2\big\langle \tilde{q}_2-\mathbb{E}\tilde{q}_2,\bar{\eta}_1-\mathbb{E}\bar{\eta}_1 \big\rangle
						+2\big\langle \tilde{q}_3-\mathbb{E}\tilde{q}_3,\bar{\zeta}_1-\mathbb{E}\bar{\zeta}_1\big\rangle
						+2\big\langle \tilde{\rho}-\mathbb{E}\tilde{\rho},\bar{u}_2-\mathbb{E}\bar{u}_2 \big\rangle+\big\langle \check{Q}_{11}\mathbb{E}\bar{x},\mathbb{E}\bar{x} \big\rangle\\
						&\quad +2\big\langle \check{Q}_{12}\mathbb{E}\bar{x},\mathbb{E}\bar{\eta}_1 \big\rangle+2\big\langle \check{Q}_{13}\mathbb{E}\bar{x},\mathbb{E}\bar{\zeta}_1 \big\rangle
						+\big\langle \check{Q}_{22}\mathbb{E}\bar{\eta}_1,\mathbb{E}\bar{\eta}_1 \big\rangle
						+2\big\langle \check{Q}_{23}\mathbb{E}\bar{\eta}_1,\mathbb{E}\bar{\zeta}_1 \big\rangle
						+\big\langle \check{Q}_{33}\mathbb{E}\bar{\zeta}_1,\mathbb{E}\bar{\zeta}_1 \big\rangle\\
						&\quad+2\big\langle \check{S}_1\mathbb{E}\bar{x},\mathbb{E}\bar{u}_2\big\rangle+2\big\langle \check{S}_2\mathbb{E}\bar{\eta}_1,\mathbb{E}\bar{u}_2\big\rangle
						+2\big\langle \check{S}_3\mathbb{E}\bar{\zeta}_1,\mathbb{E}\bar{u}_2\big\rangle+\big\langle \check{R}\mathbb{E}\bar{u}_2,,\mathbb{E}\bar{u}_2 \big\rangle
						+2\big\langle \check{q}_1,\mathbb{E}\bar{x} \big\rangle\\
						&\quad +2\big\langle \check{q}_2,\mathbb{E}\bar{\eta}_1 \big\rangle+2\big\langle \check{q}_3,\mathbb{E}\bar{\zeta}_1\big\rangle+2\big\langle \check{\rho},\mathbb{E}\bar{u}_2 \big\rangle \Big]ds+\mathcal{L}\bigg\}.
					\end{aligned}
				\end{equation*}
				We set
				\begin{equation*}
					\mathbb{M}_1:=\left(\begin{matrix} 1 & 0 \end{matrix}\right),\quad
					\mathbb{M}_2:=\left(\begin{matrix} 0 & 1 \end{matrix}\right),
				\end{equation*}
				then
				\begin{equation*}
					\bar{x}=\mathbb{M}_1\bar{X},\quad \bar{\eta}_1=\mathbb{M}_2\bar{Y},\quad \bar{\zeta}_1=\mathbb{M}_2\bar{Z}.
				\end{equation*}
				
				Noticing (\ref{outcome leader}), (\ref{Eeta}) and (\ref{eta-Eeta}),we have
				\begin{equation*}
					\begin{aligned}
						&V_2(t,\xi)=\mathbb{E}\bigg\{ \big\langle \mathbb{M}_1^\top G^2\mathbb{M}_1\big(\bar{X}(T)-\mathbb{E}\bar{X}(T)\big),\bar{x}(T)-\mathbb{E}\bar{x}(T) \big\rangle\\
						&\quad +\big\langle \mathbb{M}_1^\top\boldsymbol{G}^2\mathbb{M}_1\mathbb{E}\bar{X}(T),\mathbb{E}\bar{X}(T)\big\rangle
						+2\big\langle \mathbb{M}_1^\top g^2,\bar{X}(T)-\mathbb{E}\bar{X}(T) \big\rangle+2\big\langle \mathbb{M}_1^\top\boldsymbol{g}^2,\mathbb{E}\bar{X}(T)\big\rangle\\
						&\quad +\int_t^T \Big[ \big\langle \mathbb{M}_1^\top\tilde{Q}_{11}\mathbb{M}_1\big(\bar{X}-\mathbb{E}\bar{X}\big),\bar{X}-\mathbb{E}\bar{X}\big\rangle
						+2\big\langle\mathbb{M}_2^\top\tilde{Q}_{12}\mathbb{M}_1\big(\bar{X}-\mathbb{E}\bar{X}\big),\bar{Y}-\mathbb{E}\bar{Y} \big\rangle\\
						&\quad +2\big\langle \mathbb{M}_2^\top\tilde{Q}_{13}\mathbb{M}_1\big(\bar{X}-\mathbb{E}\bar{X}\big),\bar{Z}-\mathbb{E}\bar{Z} \big\rangle
						+\big\langle \mathbb{M}_2^\top\tilde{Q}_{22}\mathbb{M}_2\big(\bar{Y}-\mathbb{E}\bar{Y}\big),\bar{Y}-\mathbb{E}\bar{Y}\big\rangle\\
						&\quad +2\big\langle \mathbb{M}_2^\top\tilde{Q}_{23}\mathbb{M}_2\big(\bar{Y}-\mathbb{E}\bar{Y}\big),\bar{Z}-\mathbb{E}\bar{Z} \big\rangle
						+\big\langle \mathbb{M}_2^\top\tilde{Q}_{33}\mathbb{M}_2\big(\bar{Z}-\mathbb{E}\bar{Z}\big),\bar{Z}-\mathbb{E}\bar{Z}\big\rangle\\
						&\quad +2\big\langle \tilde{S}_1\mathbb{M}_1\big(\bar{X}-\mathbb{E}\bar{X}\big),\bar{\boldsymbol{\Theta}}\big(\bar{X}-\mathbb{E}\bar{X}\big)+\tilde{\mathbb{V}}_2 \big\rangle
						+2\big\langle \tilde{S}_2\mathbb{M}_2\big(\bar{Y}-\mathbb{E}\bar{Y}\big),\bar{\boldsymbol{\Theta}}\big(\bar{X}-\mathbb{E}\bar{X}\big)+\tilde{\mathbb{V}}_2 \big\rangle\\
					\end{aligned}
				\end{equation*}
				\begin{equation}\label{the leader's value function}
					\begin{aligned}
						&\quad +2\big\langle \tilde{S}_3\mathbb{M}_2\big(\bar{Z}-\mathbb{E}\bar{Z}\big),\bar{\boldsymbol{\Theta}}\big(\bar{X}-\mathbb{E}\bar{X}\big)+\tilde{\mathbb{V}}_2 \big\rangle
						+\big\langle \tilde{R}\big[\bar{\boldsymbol{\Theta}}\big(\bar{X}-\mathbb{E}\bar{X}\big)+\tilde{\mathbb{V}}_2\big],\bar{\boldsymbol{\Theta}}\big(\bar{X}-\mathbb{E}\bar{X}\big)+\tilde{\mathbb{V}}_2 \big\rangle\\
						&\quad +2\big\langle \mathbb{M}_1^\top\big(\tilde{q}_1-\mathbb{E}\tilde{q}_1\big),\bar{X}-\mathbb{E}\bar{X}  \big\rangle
						+2\big\langle \mathbb{M}_2^\top\big(\tilde{q}_2-\mathbb{E}\tilde{q}_2\big),\bar{Y}-\mathbb{E}\bar{Y} \big\rangle
						+2\big\langle \mathbb{M}_2^\top\big(\tilde{q}_3-\mathbb{E}\tilde{q}_3\big),\bar{Z}-\mathbb{E}\bar{Z}\big\rangle\\
						&\quad +2\big\langle \tilde{\rho}-\mathbb{E}\tilde{\rho},\bar{\boldsymbol{\Theta}}\big(\bar{X}-\mathbb{E}\bar{X}\big)+\tilde{\mathbb{V}}_2 \big\rangle
						+\big\langle \mathbb{M}_1^\top\check{Q}_{11}\mathbb{M}_1\mathbb{E}\bar{X},\mathbb{E}\bar{X} \big\rangle
						+2\big\langle \mathbb{M}_2^\top\check{Q}_{12}\mathbb{M}_1\mathbb{E}\bar{X},\mathbb{E}\bar{Y} \big\rangle\\
						&\quad +2\big\langle \mathbb{M}_2^\top\check{Q}_{13}\mathbb{M}_1\mathbb{E}\bar{X},\mathbb{E}\bar{Z} \big\rangle
						+\big\langle \mathbb{M}_2^\top\check{Q}_{22}\mathbb{M}_2\mathbb{E}\bar{Y},\mathbb{E}\bar{Y} \big\rangle
						+2\big\langle \mathbb{M}_2^\top\check{Q}_{23}\mathbb{M}_2\mathbb{E}\bar{Y},\mathbb{E}\bar{Z} \big\rangle\\
						&\quad +\big\langle \mathbb{M}_2^\top\check{Q}_{33}\mathbb{M}_2\mathbb{E}\bar{Z},\mathbb{E}\bar{Z} \big\rangle
						+2\big\langle \check{S}_1\mathbb{M}_1\mathbb{E}\bar{X},\bar{\hat{\boldsymbol{\Theta}}}\mathbb{E}\bar{X}+\check{\mathbb{V}}_2\big\rangle
						+2\big\langle \check{S}_2\mathbb{M}_2\mathbb{E}\bar{Y},\bar{\hat{\boldsymbol{\Theta}}}\mathbb{E}\bar{X}+\check{\mathbb{V}}_2\big\rangle\\
						&\quad +2\big\langle \check{S}_3\mathbb{M}_2\mathbb{E}\bar{Z},\bar{\hat{\boldsymbol{\Theta}}}\mathbb{E}\bar{X}+\check{\mathbb{V}}_2\big\rangle
						+\big\langle \check{R}\big(\bar{\hat{\boldsymbol{\Theta}}}\mathbb{E}\bar{X}+\check{\mathbb{V}}_2\big),\bar{\hat{\boldsymbol{\Theta}}}\mathbb{E}\bar{X}+\check{\mathbb{V}}_2 \big\rangle
						+2\big\langle \mathbb{M}_1^\top\check{q}_1,\mathbb{E}\bar{X} \big\rangle\\
						&\quad +2\big\langle \mathbb{M}_2^\top\check{q}_2,\mathbb{E}\bar{Y} \big\rangle
						+2\big\langle \mathbb{M}_2^\top\check{q}_3,\mathbb{E}\bar{Z}\big\rangle+2\big\langle \check{\rho},\bar{\hat{\boldsymbol{\Theta}}}\mathbb{E}\bar{X}+\check{\mathbb{V}}_2\big\rangle \bigg]ds+\mathcal{L}\bigg\}\\
						&=\mathbb{E}\bigg\{ \big\langle \tilde{\mathbb{H}}\big(\bar{X}(T)-\mathbb{E}\bar{X}(T)\big),\bar{x}(T)-\mathbb{E}\bar{x}(T) \big\rangle +\big\langle \check{\mathbb{H}}\mathbb{E}\bar{X}(T),\mathbb{E}\bar{X}(T)\big\rangle\\
						&\qquad +2\big\langle \tilde{h},\bar{X}(T)-\mathbb{E}\bar{X}(T) \big\rangle+2\big\langle \check{h},\mathbb{E}\bar{X}(T)\big\rangle+\mathcal{L}
						+\int_t^T\Big[\big\langle \tilde{\mathbb{Q}}\big(\bar{X}-\mathbb{E}\bar{X}\big),\bar{X}-\mathbb{E}\bar{X} \big\rangle\\
						&\qquad +\big\langle \check{\mathbb{Q}}\mathbb{E}\bar{X},\mathbb{E}\bar{X} \big\rangle
						+2\big\langle \bar{X}-\mathbb{E}\bar{X},\tilde{\mathbb{S}}\big\rangle+2\big\langle \mathbb{E}\bar{X},\check{\mathbb{S}} \big\rangle+\tilde{\mathbb{L}}+\check{\mathbb{L}}\Big]ds \bigg\}.
					\end{aligned}
				\end{equation}
				
				Applying It\^o's formula to $\big\langle \Gamma_1\big(\bar{X}-\mathbb{E}\bar{X}\big), \bar{X}-\mathbb{E}\bar{X}\big\rangle$ and $\big\langle \alpha_1,\bar{X}-\mathbb{E}\bar{X} \big\rangle$, noting (\ref{Lyapunov equations}), (\ref{BSDE of the leader-1}), (\ref{BSDE of the leader-2}), we have
				\begin{equation*}
					\begin{aligned}
						&\mathbb{E}\big\{\big\langle \tilde{\mathbb{H}}\big(\bar{X}(T)-\mathbb{E}\bar{X}(T)\big), \bar{X}(T)-\mathbb{E}\bar{X}(T)\big\rangle+2\big\langle \tilde{h},\bar{X}(T)-\mathbb{E}\bar{X}(T)\big\rangle\big\}\\
						&=\mathbb{E}\big\{\big\langle \Gamma_1(t)\big(\bar{X}_0-\mathbb{E}\bar{X}_0\big),\bar{X}_0-\mathbb{E}\bar{X}_0 \big\rangle+2\big\langle \alpha_1(t),\bar{X}_0-\mathbb{E}\bar{X}_0\big\rangle\big\}
						+\mathbb{E}\int_t^T\Big[\big\langle \big[\dot{\Gamma}_1+\Gamma_1\tilde{\mathbb{A}}\\
						&\qquad +\tilde{\mathbb{A}}^\top\Gamma_1+\tilde{\mathbb{C}}^\top\Gamma_1\tilde{\mathbb{C}}\big]
						\big(\bar{X}-\mathbb{E}\bar{X}\big),\bar{X}-\mathbb{E}\bar{X} \big\rangle-2\big\langle \tilde{\mathbb{S}},\bar{X}-\mathbb{E}\bar{X} \big\rangle
						+\big\langle \check{\mathbb{C}}^\top\Gamma_1\check{\mathbb{C}}\mathbb{E}\bar{X},\mathbb{E}\bar{X} \big\rangle\\
						&\qquad +2\big\langle \check{\mathbb{C}}^\top\Gamma_1\check{\mathbb{D}}+\check{\mathbb{C}}\beta_1,\mathbb{E}\bar{X} \big\rangle
						+\tilde{\mathbb{D}}^\top\Gamma_1\tilde{\mathbb{D}}+\check{\mathbb{D}}^\top\Gamma_1\check{\mathbb{D}}+2\tilde{\mathbb{B}}^\top\alpha_1+2\big(\tilde{\mathbb{D}}+\check{\mathbb{D}}\big)^\top\beta_1\Big]ds,
					\end{aligned}
				\end{equation*}
				and similarly,
				\begin{equation*}
					\begin{aligned}
						&\mathbb{E}\big\{\big\langle \check{\mathbb{H}}\mathbb{E}\bar{X}(T),\mathbb{E}\bar{X}(T) \big\rangle+2\big\langle \check{h},\mathbb{E}\bar{X}(T) \big\rangle-\big\langle \Gamma_2(t)\mathbb{E}\bar{X}_0,\mathbb{E}\bar{X}_0 \big\rangle-2\big\langle \alpha_2(t),\mathbb{E}\bar{X}_0 \big\rangle\big\}\\
						&\hspace{-4mm}=\mathbb{E}\int_t^T\Big[ \big\langle \big[\dot{\Gamma}_2+\check{\mathbb{A}}^\top\Gamma_2+\Gamma_2\check{\mathbb{A}}\big]\mathbb{E}\bar{X},\mathbb{E}\bar{X} \big\rangle
						-2\big\langle \check{\mathbb{S}}+\check{\mathbb{C}}^\top\Gamma_1\check{\mathbb{D}}+\check{\mathbb{C}}^\top\beta_1,\mathbb{E}\bar{X} \big\rangle+2\check{\mathbb{B}}^\top\alpha_2 \Big]ds.
					\end{aligned}
				\end{equation*}
				Plugging the above two equations into (\ref{the leader's value function}), we have (\ref{value functin of the leader}).
			\end{proof}
			
			Going back to the follower's problem, by (\ref{follower optimal strategy-invertible}), the optimal control $\bar{u}_1(\cdot)$ can also be represented in the following way:
			\begin{equation}\label{folower non-anticipating}
				\begin{aligned}
					\bar{u}_1&=-\Sigma_1^{-1}(B^\top_1P_1+D^\top_1P_1C+S^1_1)(\bar{x}-\mathbb{E}\bar{x})-\hat{\Sigma}_1^{-1}(\boldsymbol{B}_1^\top\Pi_1+\boldsymbol{D}_1^\top P_1\boldsymbol{C}+\boldsymbol{S}^1_1)\mathbb{E}\bar{x}\\
					&\quad-\Sigma_1^{-1}(R^1_{21}+D_1^\top P_1D_2)\big[\bar{\boldsymbol{\Theta}}\big(\bar{X}-\mathbb{E}\bar{X}\big)+\tilde{\mathbb{V}}_2\big]-\hat{\Sigma}_1^{-1}(\boldsymbol{R}^1_{21}+\boldsymbol{D}_1^\top P_1\boldsymbol{D}_2)\big(\bar{\hat{\boldsymbol{\Theta}}}\mathbb{E}\bar{X}+\check{\mathbb{V}}_2\big)\\
					&\quad -\Sigma_1^{-1}\big[B_1^\top(\bar{\eta}_1-\mathbb{E}\bar{\eta}_1)+D^\top_1(\bar{\zeta}_1-\mathbb{E}\bar{\zeta}_1)+D_1^\top P_1(\sigma-\mathbb{E}\sigma)+\rho_1^1-\mathbb{E}\rho_1^1\big]\\
					&\quad -\hat{\Sigma}_1^{-1}(\boldsymbol{B}_1^\top\mathbb{E}\bar{\eta}_1+\boldsymbol{D}_1^\top\mathbb{E}\bar{\zeta}_1+\boldsymbol{D}_1^\top P_1\mathbb{E}\sigma+\mathbb{E}\rho_1^1)\\
					\hspace{-4cm}    &=\bar{\boldsymbol{\Theta}}_1\big(\bar{X}-\mathbb{E}\bar{X}\big)+\bar{\hat{\boldsymbol{\Theta}}}_1\mathbb{E}\bar{X}+\tilde{\mathbb{V}}_1+\check{\mathbb{V}}_1,
				\end{aligned}
			\end{equation}
			where
			\begin{equation}\label{nonanticipating-follower}
				\begin{aligned}
					\bar{\boldsymbol{\Theta}}_1&=\left(\begin{matrix} \bar{\Theta}_1 &0
					\end{matrix}\right)+\left(\begin{matrix} 0 & -\Sigma_1^{-1}B^\top_1
					\end{matrix}\right)P_2+\left(\begin{matrix} 0 & -\Sigma_1^{-1}D^\top_1
					\end{matrix}\right)\big(I-P_2\tilde{\mathcal{K}}\big)^{-1}P_2\big(\tilde{\mathcal{C}}+\tilde{\mathcal{F}}^\top P_2\big)\\
					&\quad +\Big[\left(\begin{matrix} 0 & -\Sigma_1^{-1}D^\top_1
					\end{matrix}\right)\big(I-P_2\tilde{\mathcal{K}}\big)^{-1}P_2\tilde{\mathcal{D}}-\Sigma_1^{-1}(R^1_{21}+D^\top_1P_1D_2)\Big]\bar{\boldsymbol{\Theta}},\\
					\bar{\hat{\boldsymbol{\Theta}}}_1&=\left(\begin{matrix} \bar{\hat{\Theta}}_1 &0
					\end{matrix}\right)+\left(\begin{matrix} 0 & -\hat{\Sigma}_1^{-1}\boldsymbol{B}_1^\top
					\end{matrix}\right)\Pi_2+\left(\begin{matrix} 0 & -\hat{\Sigma}_1^{-1}\boldsymbol{D}_1^\top
					\end{matrix}\right)\big(I-P_2\check{\mathcal{K}}\big)^{-1}P_2\big(\check{\mathcal{C}}+\check{\mathcal{F}}^\top \Pi_2\big)\\
					&\quad +\Big[\left(\begin{matrix} 0 & -\hat{\Sigma}_1^{-1}\boldsymbol{D}_1^\top
					\end{matrix}\right)\big(I-P_2\check{\mathcal{K}}\big)^{-1}P_2\check{\mathcal{D}}
					-\hat{\Sigma}_1^{-1}(\boldsymbol{R}^1_{21}+\boldsymbol{D}_1^\top P_1\boldsymbol{D}_2)\Big]\bar{\hat{\boldsymbol{\Theta}}},\\
					\tilde{\mathbb{V}}_1&=\Big[\left(\begin{matrix} 0 & -\Sigma_1^{-1}D^\top_1
					\end{matrix}\right)\big(I-P_2\tilde{\mathcal{K}}\big)^{-1}P_2\tilde{\mathcal{F}}^\top+\left(\begin{matrix} 0 & -\Sigma_1^{-1}B_1^\top
					\end{matrix}\right)\Big](\bar{\eta}_2-\mathbb{E}\bar{\eta}_2)\\
					&\quad +\left(\begin{matrix} 0 & -\Sigma_1^{-1}D^\top_1
					\end{matrix}\right)\big(I-P_2\tilde{\mathcal{K}}\big)^{-1}(\bar{\zeta}_2-\mathbb{E}\bar{\zeta}_2)+\left(\begin{matrix} 0 & -\Sigma_1^{-1}D^\top_1
					\end{matrix}\right)\big(I-P_2\tilde{\mathcal{K}}\big)^{-1}P_2\tilde{\boldsymbol{\sigma}}\\
					&\quad +\Big[\left(\begin{matrix} 0 & -\Sigma_1^{-1}D^\top_1
					\end{matrix}\right)\big(I-P_2\tilde{\mathcal{K}}\big)^{-1}P_2\tilde{\mathcal{D}}-\Sigma_1^{-1}(R^1_{21}+D^\top_1P_1D_2)\Big]\tilde{\mathbb{V}}_2\\
					&\quad-\Sigma_1^{-1}\big[D^\top_1P_1(\sigma-\mathbb{E}\sigma)+\rho_1^1-\mathbb{E}\rho_1^1\big],\\
					\check{\mathbb{V}}_1&=\Big[\left(\begin{matrix} 0 & -\hat{\Sigma}_1^{-1}\boldsymbol{D}_1^\top
					\end{matrix}\right)\big(I-P_2\check{\mathcal{K}}\big)^{-1}P_2\check{\mathcal{F}}^\top+\left(\begin{matrix} 0 & -\hat{\Sigma}_1^{-1}\boldsymbol{B}_1^\top
					\end{matrix}\right)\Big]\mathbb{E}\bar{\eta}_2\\
					&\quad +\left(\begin{matrix} 0 & -\hat{\Sigma}_1^{-1}\boldsymbol{D}_1^\top
					\end{matrix}\right)\big(I-P_2\check{\mathcal{K}}\big)^{-1}\mathbb{E}\bar{\zeta}_2+\left(\begin{matrix} 0 & -\hat{\Sigma}_1^{-1}\boldsymbol{D}_1^\top
					\end{matrix}\right)\big(I-P_2\check{\mathcal{K}}\big)^{-1}P_2\check{\boldsymbol{\sigma}}\\
					&\quad +\Big[\left(\begin{matrix} 0 & -\hat{\Sigma}_1^{-1}\boldsymbol{D}_1^\top
					\end{matrix}\right)\big(I-P_2\check{\mathcal{K}}\big)^{-1}P_2\check{\mathcal{D}}-\hat{\Sigma}_1^{-1}(\boldsymbol{R}^1_{21} +\boldsymbol{D}_1^\top P_1\boldsymbol{D}_2)\Big]\check{\mathbb{V}}_2\\
					&\quad -\hat{\Sigma}_1^{-1}(\boldsymbol{D}_1^\top P_1\mathbb{E}\sigma+\mathbb{E}\rho_1^1).
				\end{aligned}
			\end{equation}
			
			Now, the 6-tuple $\big(\bar{\boldsymbol{\Theta}}_1(\cdot),\bar{\hat{\boldsymbol{\Theta}}}_1(\cdot),\tilde{\mathbb{V}}_1(\cdot)+\check{\mathbb{V}}_1(\cdot),\bar{\boldsymbol{\Theta}}(\cdot),
			\bar{\hat{\boldsymbol{\Theta}}}(\cdot),\tilde{\mathbb{V}}_2(\cdot)+\check{\mathbb{V}}_2(\cdot)\big)$ is our non-anticipating closed-loop Stackelberg equilibrium of the mean-field type LQ Stackelberg stochastic differential game on $[t,T]$.
			
			\begin{Remark}
				Riccati equations (\ref{Riccati---1}), (\ref{Riccati---2}), (\ref{P2}) and (\ref{Pi2}) are the same as (19), (20), (45) and (46) in \cite{LJZ2019}, respectively. This means that if both the open-loop Stackelberg equilibrium and the closed-loop Stackelberg equilibrium exist, the feedback representation of the open-loop Stackelberg equilibrium is consistent with the outcome of the closed-loop Stackelberg equilibrium.
			\end{Remark}
			
			Before the ending of this section, let us consider the solvability of Riccati equations (\ref{P2}) and (\ref{Pi2}) of $(P_2(\cdot), \Pi_2(\cdot))$, in the case
			\begin{equation}\label{special case of Riccati}
				D_1=\hat{D}_1=0,\,\,D_2=-\hat{D}_2.
			\end{equation}
			The general case is very challenging and we will consider it in the future. In the present case,  (\ref{P2}) and (\ref{Pi2}) take the following form:
			\begin{equation}\label{P2_D1=0}
				\left\{\begin{aligned}
					&0=\dot{P}_2+\tilde{\mathcal{A}}^\top P_2+P_2\tilde{\mathcal{A}}+\tilde{\mathcal{H}}+P_2\tilde{\mathcal{M}}P_2+\tilde{\mathcal{C}}^\top P_2\tilde{\mathcal{C}}\\
					&\qquad-(\tilde{\mathcal{N}}+P_2\tilde{\mathcal{B}}+\tilde{\mathcal{C}}^\top P_2\tilde{\mathcal{D}})(\tilde{R}+\tilde{\mathcal{D}}^\top P_2\tilde{\mathcal{D}})^{-1}(\tilde{\mathcal{N}}^\top+\tilde{\mathcal{B}}^\top P_2
					+\tilde{\mathcal{D}}^\top P_2\tilde{\mathcal{C}}),\quad s\in[t,T],\\
					&P_2(T)=\tilde{\mathcal{G}},\quad \tilde{R}+\tilde{\mathcal{D}}^\top P_2\tilde{\mathcal{D}}>0,
				\end{aligned}\right.
			\end{equation}
			and
			\begin{equation}\label{Pi2_D1=0}
				\left\{\begin{aligned}
					&0=\dot{\Pi}_2+\check{\mathcal{A}}^\top \Pi_2+\Pi_2\check{\mathcal{A}}+\check{\mathcal{H}}+\Pi_2\check{\mathcal{M}}\Pi_2+\check{\mathcal{C}}^\top P_2\check{\mathcal{C}}\\
					&\qquad -(\check{\mathcal{N}}+\Pi_2\check{\mathcal{B}})\check{R}^{-1}(\check{\mathcal{N}}^\top+\check{\mathcal{B}}^\top \Pi_2),\quad s\in[t,T],\\
					&\Pi_2(T)=\tilde{\mathcal{G}}+\check{\mathcal{G}},\quad  \check{R}>0.
				\end{aligned}\right.
			\end{equation}
			
			We introduce the following assumptions.
			\begin{itemize}
				\item [\textbf{(H3)}] Coefficients of (\ref{state}) and weighting coefficients of (\ref{cf}) satisfy
				\begin{equation*}
					\begin{cases}
						A, \hat{A}, C, \hat{C} \in L^{\infty}(0,T;\mathbb{R}^{n \times n}),\quad B_i, \hat{B}_i, D_i, \hat{D}_i \in L^{\infty}(0,T;\mathbb{R}^{n \times m_i}),\\
						Q^i, \hat{Q}^i \in L^{\infty}(0,T;\mathbb{S}^n),\quad S^i_j, \hat{S}^i_j \in L^{\infty}(0,T;\mathbb{R}^{m_j \times n}),\quad R^i_{jj}, \hat{R}^i_{jj} \in L^{\infty}(0,T;\mathbb{S}^{m_j}),\\
						R^i_{12}, \hat{R}^i_{12} \in L^{\infty}(0,T;\mathbb{R}^{m_2 \times m_1}),\quad G^i, \hat{G}^i \in \mathbb{S}^n, \quad i, j=1, 2.\\
					\end{cases}
				\end{equation*}
				\item [\textbf{(H4)}] Let $\tilde{\mathcal{M}}, \check{\mathcal{M}}$ be invertible, and
				\begin{equation*}
					\tilde{R},\,\check{R}>0, \quad \tilde{\mathcal{H}}-\tilde{\mathcal{N}}\tilde{R}^{-1}\tilde{\mathcal{N}}^\top\geq0,\quad\check{\mathcal{H}}-\check{\mathcal{N}}\check{R}^{-1}\check{\mathcal{N}}^\top\geq0,\quad
					\tilde{\mathcal{M}},\,\check{\mathcal{M}}<0, \quad\tilde{\mathcal{G}},\,\check{\mathcal{G}} \geq0.
				\end{equation*}
			\end{itemize}
			
			\begin{Remark}
				It is worth noting that Riccati equations $P_2(\cdot)$ and $\Pi_2(\cdot)$ are no longer standard ones like $(6.6)$ of Chapter 6 in Yong and Zhou \cite{YongZhou1999} due to additional terms $P_2\tilde{\mathcal{M}}P_2$ and $\Pi_2\check{\mathcal{M}}\Pi_2$. In order to deal with these two terms, we need to make some assumptions to $\tilde{\mathcal{M}}$ and $ \check{\mathcal{M}}$. On the other hand, assumption (H3) means that coefficients of them are essentially bounded.
			\end{Remark}
			
			\begin{mythm}\label{Riccati equation}
				Let (H3)-(H4) hold. Then Riccati equations (\ref{P2_D1=0}) and (\ref{Pi2_D1=0}) admit a unique solution $P_2(\cdot) \in C([t,T],\mathbb{S}^{2n}_+)$ and $\Pi_2(\cdot) \in C([t,T],\mathbb{S}^{2n}_+)$, respectively.
			\end{mythm}
			
			\begin{proof}
				Firstly, we prove (\ref{P2_D1=0}) admits at most one solution $P_2(\cdot) \in C([t,T],\mathbb{S}^{2n}_+)$. Suppose that $P_2(\cdot)$ and $\tilde{P}_2(\cdot)$ are two solutions of (\ref{P2_D1=0}). Setting $\hat{P}=P_2-\tilde{P}_2$, then $\hat{P}(\cdot)$ satisfies
				\begin{equation*}
					\left\{\begin{aligned}
						&0=\dot{\hat{P}}+\tilde{\mathcal{A}}^\top\hat{P}+\hat{P}\tilde{\mathcal{A}}+\hat{P}\tilde{\mathcal{M}}P_2+\tilde{P}_2\tilde{\mathcal{M}}\hat{P}+\tilde{\mathcal{C}}^\top\hat{P}\tilde{\mathcal{C}}\\
						&\quad -\big(\hat{P}\tilde{\mathcal{B}}+\tilde{\mathcal{C}}^\top\hat{P}\tilde{\mathcal{D}}\big)\Sigma^{-1}\big(\tilde{\mathcal{N}}^\top+\tilde{\mathcal{B}}^\top P_2+\tilde{\mathcal{D}}^\top P_2\tilde{\mathcal{C}}\big)\\
						&\quad -\big(\tilde{\mathcal{N}}+\tilde{P}_2\tilde{\mathcal{B}}+\tilde{\mathcal{C}}^\top \tilde{P}_2\tilde{\mathcal{D}}\big)
						\Sigma^{-1}\tilde{\mathcal{D}}^\top \hat{P}\tilde{\mathcal{D}}\hat{\Sigma}^{-1}\big(\tilde{\mathcal{N}}^\top+\tilde{\mathcal{B}}^\top P_2+\tilde{\mathcal{D}}^\top P_2\tilde{\mathcal{C}}\big)\\
						&\quad - \big(\tilde{\mathcal{N}}+\tilde{P}_2\tilde{\mathcal{B}}+\tilde{\mathcal{C}}^\top \tilde{P}_2\tilde{\mathcal{D}}\big)\hat{\Sigma}^{-1}
						\big(\tilde{\mathcal{B}}^\top\hat{P}+\tilde{\mathcal{D}}^\top\hat{P}\tilde{\mathcal{C}}\big),\quad s\in[t,T],\\
						&\hat{P}(T)=0,
					\end{aligned}\right.
				\end{equation*}
				where $\Sigma:=\tilde{R}+\tilde{\mathcal{D}}^\top P_2\tilde{\mathcal{D}} >0$ and $\hat{\Sigma}:=\tilde{R}+\tilde{\mathcal{D}}^\top \tilde{P}_2\tilde{\mathcal{D}}>0$. Since $|\Sigma(\cdot)^{-1}|$ and $|\hat{\Sigma}(\cdot)^{-1}|$ are uniformly bounded due to their continuity, we can apply Gronwall's inequality to get $\hat{P}(\cdot)\equiv 0$. This proves the uniqueness of the solution to (\ref{P2_D1=0}).
				
				Next, let us focus on the existence of solution to (\ref{P2_D1=0}). Inspired by \cite{YongZhou1999}, we set
				\begin{equation*}
					\left\{\begin{aligned}
						\Phi_1&:=\big(\tilde{R}+\tilde{\mathcal{D}}^\top P_2\tilde{\mathcal{D}}\big)^{-1}\big(\tilde{\mathcal{N}}^\top+\tilde{\mathcal{B}}^\top P_2
						+\tilde{\mathcal{D}}^\top P_2\tilde{\mathcal{C}}\big),\quad \Phi_2:=\tilde{\mathcal{M}}P_2,\\
						\hat{\mathcal{A}}&:=\tilde{\mathcal{A}}-\tilde{\mathcal{B}}\Phi_1+\Phi_2,\quad \hat{\mathcal{C}}:=\tilde{\mathcal{C}}-\tilde{\mathcal{D}}\Phi_1,\\
						\hat{\mathcal{Q}}&:=\big(\Phi_1-\tilde{R}^{-1}\tilde{\mathcal{N}}^\top\big)^\top\tilde{R}\big(\Phi_1-\tilde{R}^{-1}\tilde{\mathcal{N}}^\top\big)
						-\Phi^\top_2\tilde{\mathcal{M}}^{-1}\Phi_2+\tilde{\mathcal{H}}-\tilde{\mathcal{N}}\tilde{R}^{-1}\tilde{\mathcal{N}}^\top.
					\end{aligned}\right.
				\end{equation*}
				It is easy to verify that (\ref{P2_D1=0}) is equivalent to
				\begin{equation}\label{P2-simple}
					\left\{\begin{aligned}
						&0=\dot{P}_2+\hat{\mathcal{A}}^\top P_2+P_2\hat{\mathcal{A}}+\hat{\mathcal{C}}^\top P_2\hat{\mathcal{C}}+\hat{\mathcal{Q}},\quad s\in[t,T],\\
						&P_2(T)=\tilde{\mathcal{G}}.
					\end{aligned}\right.
				\end{equation}
				Then, we construct the followinng iterative scheme. For $i=0, 1, 2,\cdots$, setting
				\begin{equation}\label{iterative}
					\left\{\begin{aligned}
						\Sigma_0&=\tilde{\mathcal{G}},\\
						\Phi_1^i&=	\big(\tilde{R}+\tilde{\mathcal{D}}^\top \Sigma_i\tilde{\mathcal{D}}\big)^{-1}\big(\tilde{\mathcal{N}}^\top+\tilde{\mathcal{B}}^\top \Sigma_i
						+\tilde{\mathcal{D}}^\top \Sigma_i\tilde{\mathcal{C}}\big),\quad \Phi_2^i=\tilde{\mathcal{M}}\Sigma_i,\\
						\hat{\mathcal{A}}_i&=\tilde{\mathcal{A}}-\tilde{\mathcal{B}}\Phi^i_1+\Phi^i_2,\quad \hat{\mathcal{C}}_i=\tilde{\mathcal{C}}-\tilde{\mathcal{D}}\Phi_1^i,\\
						\hat{\mathcal{Q}}_i&=\big(\Phi^i_1-\tilde{R}^{-1}\tilde{\mathcal{N}}^\top\big)^\top\tilde{R}\big(\Phi^i_1-\tilde{R}^{-1}\tilde{\mathcal{N}}^\top\big)
						-{\Phi^i_2}^\top\tilde{\mathcal{M}}^{-1}\Phi_2^i+\tilde{\mathcal{H}}-\tilde{\mathcal{N}}\tilde{R}^{-1}\tilde{\mathcal{N}}^\top,
					\end{aligned}\right.
				\end{equation}
				and let $\Sigma_{i+1}(\cdot)$ be the solution to
				\begin{equation}\label{Sigma_i}
					\left\{\begin{aligned}
						&0=\dot{\Sigma}_{i+1}+\hat{\mathcal{A}}_i^\top \Sigma_{i+1}+\Sigma_{i+1}\hat{\mathcal{A}}_i+{\hat{\mathcal{C}}_i}^\top \Sigma_{i+1}\hat{\mathcal{C}}_i+\hat{\mathcal{Q}}_i,\quad s\in[t,T],\\
						&\Sigma_{i+1}(T)=\tilde{\mathcal{G}}.
					\end{aligned}\right.
				\end{equation}
				By using Lemma 7.3 of Chapter 6 in \cite{YongZhou1999} and noting (H3) and (H4), we see that for $i \geq 0 $, $\Sigma_i(\cdot)$ is well defined and moreover $\Sigma_i(\cdot) \geq 0$. We claim that $\{\Sigma_i(\cdot)\}$, for $i \geq 0 $, is a decreasing sequence in $C([t,T],\mathbb{S}^{2n}_+)$. To show this, we define $\Delta_i:=\Sigma_i-\Sigma_{i+1}$, $\Lambda_i:=\Phi^{i-1}_1-\Phi_1^i$ and $\Upsilon_i:=\Phi^{i-1}_2-\Phi_2^i$. Subtracting the $(i+1)$th equation from the $i$th equation for (\ref{Sigma_i}), we get
				\begin{equation*}
					\begin{aligned}
						-\dot{\Delta}_i&=\Delta_i\hat{\mathcal{A}}_i+\hat{\mathcal{A}}_i^\top\Delta_i+\hat{\mathcal{C}}_i^\top\Delta_i\hat{\mathcal{C}}_i+\Sigma_i\big(\hat{\mathcal{A}}_{i-1}
						-\hat{\mathcal{A}}_i\big)+\big(\hat{\mathcal{A}}_{i-1}-\hat{\mathcal{A}}_i\big)^\top\Sigma_i\\
						&\qquad +\hat{\mathcal{C}}_{i-1}^\top\Sigma_i\hat{\mathcal{C}}_{i-1}-\hat{\mathcal{C}}_i^\top\Sigma_i\hat{\mathcal{C}}_i+\hat{\mathcal{Q}}_{i-1}-\hat{\mathcal{Q}}_i.
					\end{aligned}
				\end{equation*}
				According to (\ref{iterative}), we have
				\begin{equation*}
					\begin{aligned}
						&\hat{\mathcal{A}}_{i-1}-\hat{\mathcal{A}}_i=-\tilde{\mathcal{B}}\Lambda_i+\Upsilon_i, \quad 	\hat{\mathcal{C}}_{i-1}-\hat{\mathcal{C}}_i=-\tilde{\mathcal{D}}\Lambda_i,\\
						&\hat{\mathcal{C}}_{i-1}^\top\Sigma_i\hat{\mathcal{C}}_{i-1}-\hat{\mathcal{C}}_i^\top\Sigma_i\hat{\mathcal{C}}_i=\Lambda_i^\top\tilde{\mathcal{D}}^\top\Sigma_i\tilde{\mathcal{D}}\Lambda_i
						-\hat{\mathcal{C}}^\top_i\Sigma_i\tilde{\mathcal{D}}\Lambda_i-\Lambda_i^\top\tilde{\mathcal{D}}^\top\Sigma_i\hat{\mathcal{C}}_i,\\
						&\hat{\mathcal{Q}}_{i-1}-\hat{\mathcal{Q}}_i=\Lambda_i^\top\tilde{R}\Lambda_i-\Upsilon_i^\top\tilde{\mathcal{M}}^{-1}\Upsilon_i+\Lambda_i^\top\big(\tilde{R}\Phi_1^i-\tilde{\mathcal{N}}^\top\big)\\
						&\qquad\qquad\qquad +\big(\tilde{R}\Phi_1^i-\tilde{\mathcal{N}}^\top\big)^\top\Lambda_i-\Upsilon_i^\top\tilde{\mathcal{M}}^{-1}\Phi_2^i-{\Phi_2^i}^\top\tilde{\mathcal{M}}^{-1}\Upsilon_i.
					\end{aligned}
				\end{equation*}
				Thus, we obtain
				\begin{equation*}
					\begin{aligned}
						&\quad -\big(\dot{\Delta}_i+\Delta_i\hat{\mathcal{A}}_i+\hat{\mathcal{A}}_i^\top\Delta_i+\hat{\mathcal{C}}_i^\top\Delta_i\hat{\mathcal{C}}_i \big)\\
						&=\Lambda_i^\top\big(\tilde{R}+\tilde{\mathcal{D}}^\top\Sigma_i\tilde{\mathcal{D}}\big)\Lambda_i-\Upsilon_i^\top\tilde{\mathcal{M}}^{-1}\Upsilon_i+\Lambda_i^\top\big(\tilde{R}\Phi_1^i
						-\tilde{\mathcal{N}}^\top-\tilde{\mathcal{B}}^\top\Sigma_i-\tilde{\mathcal{D}}^\top\Sigma_i\hat{\mathcal{C}}_i\big)\\
						&\quad +\big(\tilde{R}\Phi_1^i-\tilde{\mathcal{N}}^\top-\tilde{\mathcal{B}}^\top\Sigma_i-\tilde{\mathcal{D}}^\top\Sigma_i\hat{\mathcal{C}}_i\big)^\top\Lambda_i
						+\Upsilon_i^\top\big(\Sigma_i-\tilde{\mathcal{M}}^{-1}\Phi_2^i\big)+\big(\Sigma_i-\tilde{\mathcal{M}}^{-1}\Phi_2^i\big)^\top\Upsilon_i\\
						&=\Lambda_i^\top\big(\tilde{R}+\tilde{\mathcal{D}}^\top\Sigma_i\tilde{\mathcal{D}}\big)\Lambda_i-\Upsilon_i^\top\tilde{\mathcal{M}}^{-1}\Upsilon_i \geq 0.
					\end{aligned}
				\end{equation*}
				Noting that $\Delta_i(T)=0$ and Lemma 7.3 of Chapter 6 in \cite{YongZhou1999}, we have $\Sigma_i(s) \geq \Sigma_{i+1}(s),\,\, s \in [t,T]$. Thus $\{\Sigma_i(\cdot)\}$ is a decreasing sequence in $C([t,T],\mathbb{S}^{2n}_+)$, and thus has a limit denoted by $P_2(\cdot)$. Using the same method as Lemma 3.6 in Li et al. \cite{LiNieWu2023}, we can prove that $\{\Sigma_i(\cdot)\}$ is a Cauchy sequence in $C([t,T],\mathbb{S}^{2n}_+)$ and $\{\dot{\Sigma}_i(\cdot)\}$ is a Cauchy sequence in $C([t,T],\mathbb{S}^{2n})$. Clearly, $P_2(\cdot)$ is the solution to (\ref{P2-simple}), hence (\ref{P2_D1=0}).
				
				Finally, after obtaining the solvability of (\ref{P2_D1=0}) of $P_2(\cdot)$, we rewrite (\ref{Pi2_D1=0}) of $\Pi_2(\cdot)$:
				\begin{equation}\label{Pi2-2}
					\left\{\begin{aligned}
						&0=\dot{\Pi}_2+\big(\check{\mathcal{A}}-\check{\mathcal{B}}\check{R}^{-1}\check{\mathcal{N}}^\top\big)^\top \Pi_2+\Pi_2\big(\check{\mathcal{A}}-\check{\mathcal{B}}\check{R}^{-1}\check{\mathcal{N}}^\top\big)\\
						&\qquad +\Pi_2\big(\check{\mathcal{M}}-\check{\mathcal{B}}\check{R}^{-1}\check{\mathcal{B}}^\top\big)\Pi_2
						+\check{\mathcal{C}}^\top P_2\check{\mathcal{C}}+\check{\mathcal{H}}-\check{\mathcal{N}}\check{R}^{-1}\check{\mathcal{N}}^\top,\quad s\in[t,T],\\
						&\Pi_2(T)=\tilde{\mathcal{G}}+\check{\mathcal{G}},\quad  \check{R}>0.
					\end{aligned}\right.
				\end{equation}
				Noting that (H4), we can obtain $\check{\mathcal{C}}^\top P_2\check{\mathcal{C}}+\check{\mathcal{H}}-\check{\mathcal{N}}\check{R}^{-1}\check{\mathcal{N}}^\top \geq 0$, which means (\ref{Pi2_D1=0}) is uniquely solvable.
			\end{proof}
			
			\section{Concluding Remarks}
			
			In this paper, we have investigated open-loop and closed-loop Stackelberg equilibria for the mean-field type LQ Stackelberg stochastic differential game. We conclude that the existence of open-loop Stackelberg equilibrium is equivalent to the solvability of some MF-FBSDEs system as well as some convexity conditions (Lemma \ref{follower open-loop ns condition} and Theorem \ref{leader open-loop ns condition}). We describe the closed-loop solvability of follower's problem, that is, the existence of closed-loop optimal strategies is equivalent to the solvability of two Riccati equations and a MF-BSDE (Theorem \ref{Th-cl-f}). Necessary conditions for the existence of closed-loop optimal strategies are given (Theorem \ref{Th-cl-l}). The state equation of the leader is a MF-FBSDE, which leads to the failure of the completion-of-square technique and is impossible to obtain sufficient conditions for the existence of closed-loop optimal strategies of the leader. This is a gap that we hope to fill in the future. But fortunately, we get the expression of the value function of the leader's problem through two coupled Lyapunov equations and two coupled BSDEs (Theorem \ref{leader value function}). In a special case, the existence and uniqueness of solutions of Riccati equations for leader's optimization problem is discussed (Theorem \ref{Riccati equation}).
			

		\end{document}